\renewcommand{\eprint}[1]{#1}
\numberwithin{equation}{section}
\newtheorem{theorem}{Theorem}[section]
\newtheorem{corollary}[theorem]{Corollary}
\newtheorem{lemma}[theorem]{Lemma}
\newtheorem{proposition}[theorem]{Proposition}
\theoremstyle{remark}
\newtheorem{example}[theorem]{Example}
\theoremstyle{definition}
\newtheorem{definition}[theorem]{Definition}
\newcommand\bp{\begin{proof}}
\newcommand\ep{\end{proof}}
\newcommand\ee{\nopagebreak\mbox{\ }\hfill$\diamond$}
\newcommand\bicrossl{%
  \mathrel{\scalerel*{\mathrel{\triangleright}\joinrel\blacktriangleleft}{x}}}
\newcommand\Dhat{{\hat\Delta}}
\newcommand\Ad{\operatorname{Ad}}
\newcommand\Aut{\operatorname{Aut}}
\newcommand\Mat{\operatorname{Mat}}
\newcommand\Res{\operatorname{Res}}
\newcommand\Tr{\operatorname{Tr}}
\newcommand\Op{\operatorname{Op}}
\newcommand\HS{{\rm HS}}
\newcommand{\C}{{\mathbb C}}
\newcommand{\R}{{\mathbb R}}
\newcommand\T{{\mathbb T}}
\newcommand\Z{{\mathbb Z}}
\newcommand{\N}{{\mathcal N}}
\newcommand\OO{\mathcal{O}}
\newcommand{\G}{\mathcal{G}}
\newcommand\kk{\mathbf{k}}
\newcommand{\vf}{\varphi}
\newcommand\pent{\mathrm{pent}}
\newcommand{\CL}{{\mathcal L}{}}
\newcommand{\CJ}{\mathcal J}
\newcommand{\CU}{\mathcal U}
\newcommand{\CF}{\mathcal F}
\begin{document}

\date{December 1, 2023}

\author[Bieliavsky]{Pierre Bieliavsky}
\email{Pierre.Bieliavsky@uclouvain.be}
\address{Institut de Recherche en Math\'ematique et Physique, Universit\'e Catholique de Louvain, Chemin du Cyclotron, 2, 1348 Louvain-la-Neuve, Belgium}

\author[Gayral]{Victor Gayral}
\email{victor.gayral@univ-reims.fr}
\address{Laboratoire de Math\'ematiques, CNRS UMR 9008, Universit\'e de Reims Champagne-Ardenne,
Moulin de la Housse - BP 1039,
51687 Reims, France}

\author[Neshveyev]{Sergey Neshveyev}
\email{sergeyn@math.uio.no}
\address{Department of Mathematics, University of Oslo,
P.O. Box 1053 Blindern, NO-0316 Oslo, Norway}
\thanks{Supported by the NFR project 300837 ``Quantum Symmetry''}

\author[Tuset]{Lars Tuset}
\email{larst@oslomet.no}
\address{Department of Computer Science, OsloMet - storbyuniversitetet,
P.O. Box 4 St. Olavs plass, NO-0130 Oslo, Norway}

\title{Quantization of locally compact groups associated with essentially bijective \texorpdfstring{$1$}{1}-cocycles}

\begin{abstract}
Given an extension $0\to V\to G\to Q\to1$ of locally compact groups, with $V$ abelian, and a compatible essentially bijective $1$-cocycle $\eta\colon Q\to\hat V$, we define a dual unitary $2$-cocycle on $G$ and show that the associated deformation of $\hat G$ is a cocycle bicrossed product defined by a matched pair of subgroups of $Q\ltimes\hat V$. We also discuss an interpretation of our construction from the point of view of Kac cohomology for matched pairs.

Our setup generalizes that of Etingof and Gelaki for finite groups and its extension due to Ben David and Ginosar, as well as our earlier work on locally compact groups satisfying the dual orbit condition. In particular, we get a locally compact quantum group from every involutive nondegenerate set-theoretical solution of the Yang--Baxter equation, or more generally, from every brace structure.

On the technical side, the key new points are constructions of an irreducible projective representation of $G$ on $L^2(Q)$ and a unitary quantization map $L^2(G)\to\HS(L^2(Q))$ of Kohn--Nirenberg type.
\end{abstract}

\maketitle

\section*{Introduction}

The present work is a continuation of our earlier paper~\cite{BGNT3} and its follow-up~\cite{GM} by the second author and Marie. In~\cite{BGNT3} we introduced the following notion. Assume a locally compact group~$Q$ acts on an abelian locally compact group $V$. We say that the pair $(V,Q)$ satisfies the \emph{dual orbit condition} if there is a point $\xi_0\in\hat V$ such that the map $Q\to \hat V$, $q\mapsto q^\flat\xi_0$, is a measure class isomorphism. When $Q$ is a Lie group and $V\cong\R^n$, then the map $Q\to\hat V$ is injective and the orbit $Q^\flat\xi_0\subset\hat V$ is automatically open, which implies that $G:=Q\ltimes V$ is a Frobenius Lie group~\cite{MR0556091}, hence a Poisson--Lie group. The problem at hand was to quantize $G$ in the analytic setting by constructing a unitary dual $2$-cocycle on $G$, that is, a unitary $\Omega\in W^*(G)\bar\otimes W^*(G)$ satisfying the identity
$$
(\Omega\otimes1)(\Dhat\otimes\iota)(\Omega)=(1\otimes\Omega)(\iota\otimes\Dhat)(\Omega).
$$

More generally, one can study such elements $\Omega$ beyond the Lie group case and independently of the quantization problem. This leads to examples of quantum groups: every $\Omega$ defines a locally compact quantum group $G_\Omega$ with von Neumann bialgebra $(W^*(G),\Omega\Dhat(\cdot)\Omega^*)$. In~\cite{BGNT3} we showed, under the dual orbit condition, how to explicitly construct at least one such $\Omega$. In~\cite{GM} a family of $\Omega$'s parameterized by the $2$-cocycles on $Q$ was constructed.

It is clear that the dual orbit condition is never satisfied for nontrivial finite groups. Nevertheless, as we subsequently realized, our setting is close to a finite group setup considered earlier by Etingof and Gelaki for the same purpose of constructing examples of dual cocycles~\cite{EG}. Namely, assume that in addition to an action of $Q$ on $\hat V$ we have a $1$-cocycle $\eta\colon Q\to\hat V$. It is called a \emph{bijective $1$-cocycle} if it is bijective as a map~\cites{MR1722951,EG}. In our analytical setting it is natural to consider the weaker notion of an \emph{essentially bijective $1$-cocycle}, where we require only that $\eta\colon Q\to\hat V$ is a measure class isomorphism. The dual orbit condition for $\xi_0\in\hat V$ is then equivalent to essential bijectivity of the $1$-coboundary $\eta(q):=q^\flat\xi_0-\xi_0$.

Replacing the dual orbit condition by essential bijectivity of a $1$-cocycle significantly expands the class of groups that we can analyze, see Examples~\ref{ex:cd}-\ref{ex:dc} and references therein. Let us also mention that for Lie groups one has the following known geometric interpretation of essential bijectivity: if $Q$ is a Lie group and $V\cong\R^n$, then every essentially bijective $1$-cocycle $\eta\colon Q\to\hat V$ is injective and has open image, hence it defines a $Q$-invariant affine structure on~$Q$~\cite{MR1272113}.

The work of Etingof and Gelaki was later generalized by Ben David and Ginosar~\cite{MR2534251}, who considered extensions $0\to V\to G\to Q\to1$ instead of
semidirect products $G=Q\ltimes V$. In this case, to construct a dual $2$-cocycle on $G$ one needs a compatibility condition between the bijective cocycle $\eta$ and the extension class. The same condition makes sense in our analytical setting of locally compact groups and essentially bijective cocycles, see Definition~\ref{def:compatible}. With this condition we are able to construct a dual unitary $2$-cocycle on $G$ and identify the corresponding locally compact quantum groups, generalizing the main results of~\cites{BGNT3,GM}.

Let us say a few words about how our approach to dual $2$-cocycles compares to~\cites{EG,MR2534251}. In the finite group case there are several ways of describing nondegenerate dual $2$-cocycles on~$G$, where by nondegeneracy one means that a cocycle is not induced from a proper subgroup. In addition to explicit formulas, as in~\cite{EG}, one can also describe them in terms of usual nondegenerate $2$-cocycles on $G$, which is the approach taken in~\cite{MR2534251}. But in our opinion very often the most transparent way to define them is to start with an irreducible projective representations of dimension~$\sqrt{|G|}$. For infinite groups this condition has to be properly formulated as an analytic version of the Hopf--Galois condition. This has been done by De Commer in~\cite{DC}, and this is the approach we take.

\smallskip

In more detail, the paper is organized as follows. After a short preliminary section, we define the key ingredients of our approach in Section~\ref{sec:rep}. First of all it is a square integrable irreducible projective representation $\pi\colon G\to PU(L^2(Q))$ such that $(B(L^2(Q)),\Ad\pi)$ is a $G$-Galois object. The definition of $\pi$ seems to us very natural, and all the required properties of $\pi$ are verified in a straightforward way. Second, it is a \emph{quantization map}, a $G$-equivariant unitary operator $L^2(G)\to \HS(L^2(Q))$. Its construction is motivated by the Kohn--Nirenberg quantization used in~\cite{BGNT3} and an integral formula for it suggested in~\cite{GM}. The strategy is therefore similar to \cites{BGNT3,GM}, but we streamline some of the arguments in both papers. In particular, the verification of the Galois property no longer relies on deformations of function algebras.

Section~\ref{sec:2mult} contains our main results about the dual $2$-cocycle $\Omega$ on $G$ arising from $\pi$. We obtain an explicit formula for $\Omega$, compute the multiplicative unitary of the quantum group $\hat G_\Omega:=(W^*(G),\Omega\Dhat(\cdot)\Omega^*)$ and recognize  it as the multiplicative unitary of a cocycle bicrossed product defined by the matched pair $(Q,Q_\eta)$ of subgroups of $Q\ltimes\hat V$, where $Q_\eta:=\{(q,\eta(q))\mid q\in Q\}\cong Q$.

Therefore $\hat G_\Omega$ fits into a short exact sequence
\begin{equation}\label{eq:exte}
1\to\hat Q\to \hat G_\Omega\to Q_\eta\to 1
\end{equation}
of locally compact quantum groups. Such extensions are classified by the second Kac cohomology of matched pairs~\cites{Kac,MR2115374}. The construction of $\pi$ and $\Omega$ depends on some choices, so we in fact get a family of dual cocycles $\Omega$, giving rise to a family of extensions~\eqref{eq:exte} parameterized by the classes in $H^2(Q;\T)$. We use Kac's exact sequence to show that when $\eta$ is a coboundary, this parameterization is one-to-one (but not all extensions~\eqref{eq:exte} are obtained this way in general). This answers a question left open in~\cite{GM}.

Finally, the paper has two appendices that contain details of a number of computations in Section~\ref{sec:2mult}, which are too tiresome to present  in the main text.

\bigskip

\section{Setup}

Throughout the paper $Q$ and $V$ denote second countable locally compact groups
with $V$ abelian. We assume that we are given a continuous homomorphism $Q\to\Aut(V)$ and consider an extension~$G$  of~$V$ by~$Q$:
\begin{equation}\label{eq:extension}
0\to V\to G\to Q\to1.
\end{equation}
Let $\beta\in Z^2(Q;V)$ be a representative of the class of this extension, where we consider Moore cohomology with Borel cochains~\cite{MR0414775}. Thus, $\beta\colon Q\times Q\to V$ is a Borel map satisfying
$$
\beta(q_1,q_2)+\beta(q_1q_2,q_3)=q_1\beta(q_2,q_3)+\beta(q_1,q_2q_3).
$$
Then the group $G$
can be realized  as the set $Q\times V$ with group law
$$
(q_1,v_1)(q_2,v_2)=(q_1q_2,v_1+q_1v_2+\beta(q_1,q_2)).
$$
The inverse is given by
$$
(q,v)^{-1}=(q^{-1},-q^{-1}v-\beta(q^{-1},q))=(q^{-1},-q^{-1}v-q^{-1}\beta(q,q^{-1})).
$$
We remark that the Borel structure on $G$ is that of the product-space $Q\times V$, but the topology is in general different.

Denote by $\hat V$ the dual abelian group endowed with the dual action of $Q$, which we denote by~$\flat$.
By a $1$-cocycle $\eta\in Z^1_c(Q;\hat V)$ we mean a continuous map $\eta\colon Q\to\hat V$ satisfying
$$
\eta(q_1q_2)=\eta(q_1)+q_1^\flat\eta(q_2).
$$
It is a $1$-coboundary if it has the form $\partial\xi_0(q):=q^\flat\xi_0-\xi_0$ for some $\xi_0\in\hat V$. We remark that we could formally weaken the continuity condition on $\eta$ by assuming that it is only a Borel map, but this would lead to the same notion, since $\eta$ defines a homomorphism from $Q$ into the group $Q\ltimes\hat V$, $q\mapsto(q,\eta(q))$, and any Borel homomorphism of Polish groups is automatically continuous.

We will write the duality pairing between  $\hat V$ and $V$ as $e^{i \langle\cdot,\cdot\rangle}$. This is just a notation, we do not mean that we have an $\R$-valued pairing between  these groups, although this is indeed the case in many examples. Consider then the continuous map $\omega_\eta:G\times G\to \T$ given by
\begin{equation}\label{eq:omega-eta}
\omega_\eta(g_1,g_2):=e^{i \langle\eta(q_1^{-1}),v_2\rangle}=e^{-i \langle\eta(q_1),q_1v_2\rangle}.
\end{equation}
It is a $2$-cocycle on $G$ when $\beta=0$, since
$$
\partial \omega_\eta(g_1,g_2,g_3)=\frac{ \omega_\eta(g_2,g_3) \omega_\eta(g_1,g_2g_3)}{ \omega_\eta(g_1,g_2) \omega_\eta(g_1g_2,g_3)}
=e^{i\langle\eta(q_1^{-1}),\beta(q_2,q_3)\rangle}.
$$

Let  $\psi \in Z^3(Q;\T)$ be the $3$-cocycle on $Q$ given by
\begin{equation}\label{eq:psi}
\psi (q_1,q_2,q_3):=e^{i\langle\eta(q_1^{-1}),\beta(q_2,q_3)\rangle}=e^{-i\langle\eta(q_1),q_1\beta(q_2,q_3)\rangle}.
\end{equation}
This cocycle is a representative of the cup-product class $-[\eta]\cup[\beta]\in H^3(Q;\T)$ and,
unless $[\beta]=[0]\in H^2(Q;V)$ or $[\eta]=[0]\in H^1(Q;\hat V)$, there is no reason why it should be trivial on $Q$, only its inflation to $G$ is trivial.
We note in passing that if $\beta=\partial \chi$ then $\psi=\partial b$ for $b(q_1,q_2)=e^{-i\langle \eta(q_1^{-1}),\chi(q_2)\rangle}$ and when
$\eta=\partial\xi_0$ then $\psi=\partial b$ for $b(q_1,q_2)=e^{-i\langle \xi_0,\beta(q_1,q_2)\rangle}$.
\begin{definition}\label{def:compatible}
A $1$-cocycle $\eta\in Z^1_c(Q;\hat V)$ is called {\em essentially bijective}, if the map $\eta\colon Q\to \hat V$ is a measure class isomorphism.
A pair of cocycles $(\eta,\beta)\in Z^1_c(Q;\hat V)\times Z^2(Q;V)$ is called {\em compatible} if we have  $[\psi]=[1]\in H^3(Q;\T)$.
\end{definition}

Obviously, the compatibility condition is satisfied if $[\eta]=[0]$ or $[\beta]=[0]$. In particular, if $\eta$ is trivial, then we can consider any extension.

Note that essential bijectivity is a property of a given cocycle, not of its cohomology class. In order to see this, consider a coboundary $\partial\xi_0$. It is essentially bijective if and only the map $Q\to \hat V$, $q\mapsto q^\flat\xi_0$, is a measure class isomorphism, a property that we called a dual orbit condition in~\cite{BGNT3}. It is obvious that  for nontrivial groups this property is never satisfied for $\xi_0=0$.

Nevertheless essential bijectivity is almost a cohomological invariant in the following measure theoretic sense.

\begin{lemma}
If $\eta\in Z^1_c(Q;\hat V)$ is an essentially bijective $1$-cocycle, then the cohomologous cocycle $\eta+\partial\xi_0$ is essentially bijective for almost all $\xi_0\in\hat V$.
\end{lemma}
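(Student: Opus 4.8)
The plan is to verify the two defining properties of a measure class isomorphism for $\eta+\partial\xi_0$ separately: essential injectivity, and the equivalence $(\eta+\partial\xi_0)_*\mu_Q\sim\mu_{\hat V}$ of the pushforward of Haar measure with Haar measure. A naive attempt to compute this pushforward directly runs into trouble, because $(\eta+\partial\xi_0)(q)=\eta(q)+q^\flat\xi_0-\xi_0$ contains the term $q^\flat\xi_0$ coupling $q$ and $\xi_0$; if one forms the product map $(q,\xi_0)\mapsto(\eta(q)+q^\flat\xi_0,\xi_0)$, fibers it over $\xi_0$, and tries to invert it, one is led back to solving $\eta(q)+q^\flat\xi_0=\chi$ for $q$, which is \emph{equivalent} to the statement we want. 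I expect this circularity to be the main obstacle, and the way around it is to linearise the problem by transporting it through $\eta$ itself.

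First I would use essential bijectivity to set $\nu:=\eta_*\mu_Q$; by hypothesis $\nu\sim\mu_{\hat V}$, so $d\nu=\rho\,d\mu_{\hat V}$ with $\rho>0$ almost everywhere. Pushing the group law of $Q$ through $\eta$ equips $\hat V$ with a Borel group law $*$, defined for almost every pair, for which $\nu$ is a Haar measure and
$$\zeta_1*\zeta_2=\zeta_1+(\eta^{-1}(\zeta_1))^\flat\zeta_2,$$
where $\eta^{-1}$ is the essential inverse of $\eta$; this is just the bijective-cocycle/brace dictionary. The key computation is then that, under the identification $\eta\leftrightarrow\id$, the cocycle $\eta+\partial\xi_0$ becomes the map $U_{\xi_0}\colon\hat V\to\hat V$, $U_{\xi_0}(\zeta)=\zeta*\xi_0-\xi_0$, since with $\zeta=\eta(q)$ one has $\eta(q)+q^\flat\xi_0-\xi_0=\zeta*\xi_0-\xi_0$. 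Thus $\eta+\partial\xi_0=U_{\xi_0}\circ\eta$, and since $\eta$ is already a measure class isomorphism it suffices to show that $U_{\xi_0}$ is one for almost all $\xi_0$.

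Now $U_{\xi_0}=\tau_{-\xi_0}\circ R_{\xi_0}$ factors as the right $*$-translation $R_{\xi_0}(\zeta)=\zeta*\xi_0$ followed by the ordinary translation $\tau_{-\xi_0}(\chi)=\chi-\xi_0$ of $(\hat V,+)$. Right translation in a locally compact group preserves the Haar class, and through $\eta$ the map $R_{\xi_0}$ corresponds to right translation in $Q$ by $\eta^{-1}(\xi_0)$, so $(R_{\xi_0})_*\nu\sim\nu$; moreover $(\tau_{-\xi_0})_*\nu$ has $\mu_{\hat V}$-density $\rho(\,\cdot+\xi_0)$, which is positive almost everywhere, whence $(U_{\xi_0})_*\nu\sim\mu_{\hat V}$ and in particular the image of $U_{\xi_0}$ is conull. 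For injectivity I would invoke Fubini: since $*$ agrees almost everywhere on $\hat V\times\hat V$ with an honest group operation, for almost every $\xi_0$ the slice on which $R_{\xi_0}$ coincides with the genuine right translation is conull, so $R_{\xi_0}$ is essentially injective; as $\tau_{-\xi_0}$ is a bijection, $U_{\xi_0}$ is essentially injective for almost all $\xi_0$. This is precisely where the ``almost all'' enters: the transported operation $*$ is defined only off a null set, so a specific $\xi_0$ must be generic for the slice argument to apply. Combining the two points, $U_{\xi_0}$, and hence $\eta+\partial\xi_0$, is a measure class isomorphism for almost every $\xi_0$.
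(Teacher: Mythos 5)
Your argument is correct and, once unwound, is the same as the paper's: the whole content is the cocycle identity $\eta(qq_0)=\eta(q)+q^\flat\eta(q_0)$, which says that $\eta+\partial\xi_0$ for $\xi_0=\eta(q_0)$ is $\eta$ precomposed with right translation by $q_0$ and followed by subtraction of the constant $\eta(q_0)$ (your $U_{\xi_0}\circ\eta=\tau_{-\xi_0}\circ R_{\xi_0}\circ\eta$ with $q_0=\eta^{-1}(\xi_0)$), and the admissible $\xi_0$ form exactly the conull essential image of $\eta$. The detour through the transported operation $*$ on $\hat V$ and the Fubini slice argument is a harmless repackaging --- the identity $\zeta*\xi_0=\eta(\eta^{-1}(\zeta)\eta^{-1}(\xi_0))$ already holds pointwise on the conull set where $\eta^{-1}$ is defined, so no genericity beyond $\xi_0\in\eta(Q)$ is needed --- and the paper states the same proof directly in three lines.
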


\bp
If $\eta$ is a measure class isomorphism, so is the map $\eta(\cdot\, q_0)$, and hence also the map $\eta(\cdot\,q_0)-\eta(q_0)$, for every $q_0\in Q$. By the cocycle identity this means that the cocycle $\eta+\partial\xi_0$ is essentially bijective for $\xi_0=\eta(q_0)$ and all $q_0\in Q$. As such $\xi_0$ form a set of full measure by assumption, this proves the lemma.
\ep

Given $\eta\in Z^1_c(Q;\hat V)$, consider the closed subgroup $Q_\eta:=\big\{(q,\eta(q))\mid q\in Q\}$ of the semidirect product $Q\ltimes\hat V$.
Of course, $Q_\eta$ is always isomorphic to $Q$, but it is conjugate to $Q$ in $Q\ltimes\hat V$ if and only if $\eta$ is a coboundary.
We have the following characterization of essentially bijective $1$-cocycles in terms of matched pairs.

\begin{lemma}\label{lem:matched}
A cocycle $\eta\in Z^1_c(Q;\hat V)$ is essentially bijective  if and only if $(Q_\eta,Q)$ is a matched pair of subgroups of $Q\ltimes\hat V$, that is, the product map
$Q_\eta\times Q\to Q\ltimes\hat V$ is a measure class isomorphism.
\end{lemma}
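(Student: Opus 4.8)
The plan is to make the product map completely explicit and then factor it into a composition of elementary measure class isomorphisms, thereby reducing its status as a measure class isomorphism to that of $\eta$ itself. Under the identification $Q_\eta\cong Q$, $q\mapsto(q,\eta(q))$, and the canonical identification of the second factor with $\{(p,0)\}\subset Q\ltimes\hat V$, and recalling that the Haar measure class on the semidirect product $Q\ltimes\hat V$ is just the product of the Haar measure classes of $Q$ and $\hat V$ (the modular factors being continuous and strictly positive, they do not affect the measure class), the product map becomes
$$
\Phi\colon Q\times Q\to Q\times\hat V,\qquad \Phi(q,p)=(q,\eta(q))\,(p,0)=(qp,\eta(q)).
$$
Thus the matched pair condition is precisely the statement that $\Phi$ is a measure class isomorphism, and the lemma reduces to showing that this holds if and only if $\eta$ is one.

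First I would record two auxiliary isomorphisms. The shear map $\gamma\colon Q\times Q\to Q\times Q$, $\gamma(q,p)=(q,qp)$, is a Borel automorphism with Borel inverse $(q,r)\mapsto(q,q^{-1}r)$; since for each fixed $q$ the map $p\mapsto qp$ is left translation and hence preserves left Haar measure, Fubini shows that $\gamma$ preserves the product Haar measure, so in particular it is a measure class isomorphism. The coordinate swap $\mathrm{swap}\colon\hat V\times Q\to Q\times\hat V$ is trivially a measure class isomorphism as well.

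The key step is then the factorization
$$
\Phi=\mathrm{swap}\circ(\eta\times\id_Q)\circ\gamma,
$$
which one checks directly: $(\eta\times\id_Q)(\gamma(q,p))=(\eta(q),qp)$, and applying $\mathrm{swap}$ yields $(qp,\eta(q))=\Phi(q,p)$. Since $\gamma$ and $\mathrm{swap}$ are measure class isomorphisms, $\Phi$ is a measure class isomorphism if and only if $\eta\times\id_Q\colon Q\times Q\to\hat V\times Q$ is one. Finally, because all the measure classes in play are products, the pushforward of the product Haar class under $\eta\times\id_Q$ is equivalent to the target product class exactly when the pushforward of the Haar class of $Q$ under $\eta$ is equivalent to the Haar class of $\hat V$; that is, exactly when $\eta$ is a measure class isomorphism. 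This gives both implications at once.

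The main point requiring care is this last Fubini-type reduction for $\eta\times\id_Q$, together with the bookkeeping that certifies all the measure classes occurring as genuine product classes; this can be made precise by disintegrating the product measures along the $Q$-factor. Once that is in place, the remaining content is a routine composition of isomorphisms. No hard analysis is involved, since we never need to control actual Radon--Nikodym densities, only the equivalence of measure classes.
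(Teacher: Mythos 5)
Your proof is correct and takes essentially the same approach as the paper: the paper's entire argument is the observation that $(q_1,\eta(q_1))(q_2,0)=(q_1q_2,\eta(q_1))$, from which the equivalence is read off. You have simply made explicit the routine factorization (shear, $\eta\times\id$, swap) that the paper leaves implicit.
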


\begin{proof}
Observe that in  $Q\ltimes\hat V$ we have
$$
(q_1,\eta(q_1))(q_2,0)=(q_1q_2,\eta(q_1))\quad\text{for all}\quad q_1,q_2\in Q.
$$
This shows that the product map $Q_\eta\times Q\to Q\ltimes\hat V$ defines a measure class isomorphism if and only if $\eta\colon Q\to\hat V$ does.
\end{proof}

Besides  the abelian case $G=\hat V\times V$ (where one takes $\eta={\rm Id}_{\hat V}$) and the finite group case of~\cites{EG,MR2534251}, the following
examples fit into our setting.

\begin{example}[Dual orbit condition]
Assume that a pair $(Q,V)$ satisfies the already mentioned dual orbit condition of \cite{BGNT3}, that is, there exists $\xi_0\in\hat V$ such that the map
$Q\to\hat V$, $q\mapsto q^\flat \xi_0$, is a measure class isomorphism. In this case, the coboundary $\eta(q):=\partial\xi_0(q)=q^\flat\xi_0-\xi_0$, is an essentially bijective $1$-cocycle
and every $\beta\in Z^2(Q;V)$ is compatible with $\eta\in Z^1_c(Q;\hat V)$. We refer the reader to~\cite{BGNT3}*{Section~2.4} for many concrete examples.
\end{example}

\begin{example}[$Q$ compact and $V$ discrete]\label{ex:cd}
Assume $\tilde V$ is a locally compact abelian group and let $K\subset\tilde V$ and $Q\subset\Aut(\tilde V)$ be closed subgroups such that
$Q$ leaves $K$ globally invariant. Then, setting $V=\tilde V/K$, we get a homomorphism $Q\to\Aut( V)$.
Assume there exists  $\xi_0\in \hat{\tilde V}$ such that
the map $\tilde\eta \colon Q\to \hat{\tilde V}$, $ q\mapsto q^\flat \xi_0-\xi_0$, takes values in the annihilator $K^\perp$ of $K$ in $\hat{\tilde V}$ and that
$\tilde\eta\colon Q\to K^\perp$ defines a measure class isomorphism.
Under the identification  $\hat V\simeq K^\perp$, we get an essentially bijective $1$-cocycle $\eta\in Z^1 (Q,\hat V)$. Note that
$$
\omega_\eta(g_1,g_2)=e^{i \langle {q_1^{-1}}^\flat\xi_0-\xi_0,v_2\rangle},
$$
and this $2$-cocycle for $Q\ltimes V$ can be nontrivial when $\xi_0\in \hat{\tilde V}\setminus K^\perp$.

This covers the  examples studied by Jondreville \cite{J}, where  $Q$ is compact and $K$ is open-compact, and so $V$ is discrete, but we  no longer need
his assumptions of commutativity  of the local field and of odd cardinality of the residual field.
Namely, let $\kk$ be a non-Archimedean local skew-field, $\OO$ its maximal compact subring, $\varpi$ a uniformizer of~$\OO$, and let $\Psi$ be a character of~$\kk$ trivial on~$\OO$
but not on $\varpi^{-1}\OO$. We can identify~$\hat\kk$ with~$\kk$ using the pairing $(x,y)=\Psi(xy)$. Under this identification we have
$(\varpi^{-\ell}\OO)^\perp=\varpi^\ell\OO$ for all $\ell\in\Z$.
Put $\tilde V=\kk$, $Q=1+\varpi^\ell\OO$ and $K=\varpi^{-\ell}\OO$ for some $\ell\ge1$. Since
$$
\hat V=\widehat{\kk/\varpi^{-\ell}\OO}\simeq (\varpi^{-\ell}\OO)^\perp= \varpi^\ell\OO,
$$
we see that our requirements are satisfied. Indeed, it suffices to take $\xi_0=1\in\kk$,
which does not belong to  $\hat V=\varpi^\ell\OO$, to get that the map $\eta\colon 1+\varpi^\ell\OO \to \varpi^\ell\OO $, $ q\mapsto q^{-1}-1$, is indeed a measure class isomorphism.

More generally, we can take
$$
V=\Mat_n(\kk)/\varpi^{-\ell}\Mat_n(\OO),\quad Q={\rm I}_n+\varpi^{\ell}\Mat_n(\OO),
$$
or even more generally,
$$
V=\underbrace{\operatorname{Mat}_n(\kk)/\varpi^{-\ell}\Mat_n(\OO)\oplus \cdots\oplus\operatorname{Mat}_n(\kk)/\varpi^{-\ell}\Mat_n(\OO)}_m ,
$$
and
$$ \quad Q=
\begin{pmatrix}{\rm I}_n&\cdots&0&0\\
\vdots& \ddots& \vdots&\vdots\\
0&\cdots&{\rm I}_n&0\\
\varpi^{\ell}\Mat_n(\OO)&\cdots&\varpi^{\ell}\Mat_n(\OO)&{\rm I}_n+\varpi^{\ell}\Mat_n(\OO)\end{pmatrix}\subset{\rm GL}_{nm}({\bf k}).
$$
\end{example}

\begin{example}[$Q$ discrete and $V$ compact]\label{ex:dc}
Assume $Q$ is discrete and $V$ is compact, so $\hat V$ is discrete. In this case a cocycle $\eta$ is essentially bijective if and only if it is bijective. By~\cite{MR2298848}, to have an action of some discrete group $Q$ on $V$ and a bijective $1$-cocycle $\eta\colon Q\to\hat V$ is the same as to have a brace structure on $\hat V$. The latter means that on $\hat V$ we are given one more binary operation $\hat V\times\hat V\to\hat V$, $(a,b)\mapsto ab$, such that we have right distributivity
$$
(a+b)c=ac+bc
$$
and such that $\hat V$ is a group with respect to the operation $\circ$ defined by
$$
a\circ b:=ab+a+b.
$$
Namely, if $\hat V$ is a brace, we let $Q=(\hat V,\circ)^{\mathrm{op}}$, $\eta\colon Q\to\hat V$ be the identity map and define an action of $Q$ on $\hat V$ by $a^\flat b:=ba+b$.

For examples of bijective cocycles and braces, see \citelist{\cite{MR1722951}\cite{MR2298848}\cite{MR3763276}}.\ee
\end{example}

Next, let us discuss a few measure theoretic aspects of our setup. The Haar measure and the modular function on $G$ are the same as for $Q\ltimes V$. Namely, let $|q|=|q|_V$ be the modulus of the action of $q\in Q$ on $V$, so that if $dv$ is a Haar measure on $V$, then
$$
\int_V f(qv)\,dv=|q|^{-1}\int_V f(v)\,dv\quad\text{for all}\quad f\in C_c(V).
$$
Let $dq$ be a left Haar measure on $Q$ and let $\Delta_Q$  be the
modular function. Then a left invariant Haar  measure and the modular function on $G$ are respectively given by
$$
d(q,v)=\frac{dq\,dv}{|q|}\quad\mbox{and}\quad \Delta(q,v)=\frac{\Delta_Q(q)}{|q|}.
$$

\begin{lemma}\label{lem:normalization}
Fix a Haar measure on $\hat V$. Given an essentially bijective $1$-cocycle $\eta\in Z^1_c(Q;\hat V)$, the Haar measure on $Q$ can be normalized so that
$$
\int_Q f(\eta(q))\,\frac{dq}{|q|}=\int_{\hat V}f(\xi)\,d\xi
$$
for all positive Borel functions $f$ on $\hat V$.
\end{lemma}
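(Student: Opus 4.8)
The plan is to show that the pushforward measure $\mu:=\eta_*\big(\tfrac{dq}{|q|}\big)$ on $\hat V$ is a Haar measure. Essential bijectivity guarantees that $\mu$ is equivalent to the chosen $d\xi$, so writing $d\mu=\kappa\,d\xi$ it suffices to prove that the density $\kappa$ is constant almost everywhere, after which one absorbs the constant into the normalization of $dq$. The natural arena for this is the semidirect product $Q\ltimes\hat V$, whose matched pair structure relative to the subgroups $Q$ and $Q_\eta$ was just recorded in Lemma~\ref{lem:matched}.

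First I would compute a left Haar measure $\lambda$ on $Q\ltimes\hat V$. The modulus of $q^\flat$ acting on $\hat V$ is the reciprocal of the modulus on $V$, namely $|q|^{-1}$ (the modulus of an automorphism and of its Pontryagin adjoint are mutually inverse), so the standard semidirect product formula gives, up to a scalar, $d\lambda(q,\xi)=|q|\,dq\,d\xi$. As a sanity check, the same formula applied to the action on $V$ reproduces exactly the measure $\tfrac{dq\,dv}{|q|}$ on $G$ recorded above.

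Next I would transport $\lambda$ through the measure class isomorphism $\Phi\colon Q\times Q\to Q\ltimes\hat V$, $\Phi(q_1,q_2)=(q_1,\eta(q_1))(q_2,0)=(q_1q_2,\eta(q_1))$, of Lemma~\ref{lem:matched}. Substituting $\xi=\eta(q_1)$ and $a:=q_1q_2$, using that the modulus is a homomorphism so $|a|=|q_1|\,|q_2|$, and invoking the definition of $\mu$ to rewrite $d\xi=\tfrac{1}{\kappa(\eta(q_1))}\,\eta_*\big(\tfrac{dq_1}{|q_1|}\big)$, a short change of variables yields that the transported measure is the \emph{product} measure $\tfrac{1}{\kappa(\eta(q_1))}\,dq_1\times|q_2|\,dq_2$ on $Q\times Q$.

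The crucial and cleanest step is then the following. Left multiplication in $Q\ltimes\hat V$ by $(q_0,\eta(q_0))\in Q_\eta$ satisfies, by the cocycle identity, $(q_0,\eta(q_0))(q_1,\eta(q_1))=(q_0q_1,\eta(q_0q_1))$, so in the coordinates $(q_1,q_2)$ it acts simply as $(q_1,q_2)\mapsto(q_0q_1,q_2)$, fixing the second factor. Since $\lambda$ is left invariant, the transported measure is invariant under this map for every $q_0$; being a product measure, its first marginal $\tfrac{1}{\kappa(\eta(q_1))}\,dq_1$ must be left invariant on $Q$, hence equal to a scalar multiple of $dq_1$ by uniqueness of Haar measure. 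This forces $\kappa\circ\eta$ to be constant almost everywhere, and therefore $\kappa$ to be constant almost everywhere on $\hat V$ by essential surjectivity, which is exactly what we want. The main technical care is in the almost-everywhere bookkeeping, since $\kappa$ and the inverse of $\eta$ are defined only up to null sets, and in justifying that the transported measure genuinely factors as a product; once the product structure is secured, isolating the first marginal and applying uniqueness of Haar measure is immediate.
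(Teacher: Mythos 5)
Your argument is correct, but it takes a different route from the paper's. The paper never leaves the group $G$-side: it lets $Q\ltimes V$ act on $V\times\hat V$ by $(q,v)(w,\xi)=(qw+v,\,q^\flat\xi+\eta(q))$, observes that the Haar measure $dv\,d\xi$ is invariant (the two moduli $|q|$ and $|q|^{-1}$ cancel), and pulls it back along the equivariant orbit map $(q,v)\mapsto(v,\eta(q))$ to obtain a left-invariant Radon measure on $Q\ltimes V$, which by uniqueness of Haar measure must be a multiple of $|q|^{-1}dq\,dv$; reading off the $Q$-factor gives the normalization. You instead work entirely in $Q\ltimes\hat V$, compute its left Haar measure $|q|\,dq\,d\xi$, transport it through the matched-pair factorization $\Phi(q_1,q_2)=(q_1q_2,\eta(q_1))$ of Lemma~\ref{lem:matched}, and use invariance under left translation by $Q_\eta$ to force the first marginal $\kappa(\eta(q_1))^{-1}dq_1$ to be Haar. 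Both proofs are ultimately ``construct an invariant measure and invoke uniqueness,'' and your computations (the modulus $|q|^{-1}$ of $q^\flat$, the product form $\kappa(\eta(q_1))^{-1}dq_1\times|q_2|\,dq_2$ of $\Phi^*\lambda$, the translation action $(q_1,q_2)\mapsto(q_0q_1,q_2)$) all check out. The paper's version is slightly shorter because the orbit map lands directly on the group $G$ whose Haar measure is already known, so no marginal-extraction step is needed; yours has the mild virtue of never invoking $V$ itself and of making visible the role of the matched pair $(Q_\eta,Q)$ that drives the rest of the paper. The almost-everywhere issues you flag (invariance of an a.e.-defined density forcing it to be a.e.\ constant, and testing the product measure against $f(q_1)g(q_2)$ with $0<\int g(q_2)|q_2|\,dq_2<\infty$ to isolate the first marginal) are genuine but standard, and the paper's proof silently relies on the same kind of bookkeeping.
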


\bp
We have an action of $Q\ltimes V$ on $V\times\hat V$ defined by
$$
(q,v)(w,\xi):=(qw+v,q^\flat\xi+\eta(q)).
$$
The Haar measure on $V\times\hat V$ is invariant under this action. It follows that the pull-back of this measure under the measure class isomorphism $Q\ltimes V\to V\times\hat V$, $(q,v)\mapsto (q,v).(0,0)=(v,\eta(q))$, is a nonzero left-invariant Radon measure on the group $Q\ltimes V$. Hence it coincides with $|q|^{-1}dq\,dv$ up to a scalar factor, which proves the lemma.
\ep

We assume from now on that this is how the Haar measure on $Q$ is normalized. We also normalize the Haar measure on $V$ so that the Fourier transform  $\CF_V$ defined by
$$
(\CF_Vf)(\xi):=\int_V e^{-i\langle\xi,v\rangle} f(v)\,dv
$$
is a unitary from $L^2(V)$ onto $L^2(\hat V)$. Altogether this uniquely determines the Haar measure on~$G$: if we multiply the Haar measure on~$\hat V$ by a factor, then the Haar measure on $Q$ also has to be multiplied by this factor, while the Haar measure on $V$ has to be divided by the same factor, and therefore the Haar measure on~$G$ remains unchanged.

\bigskip

\section{Projective representations, Galois objects and quantization maps}\label{sec:rep}

Throughout this section we assume that $(\eta,\beta)\in Z^1_c(Q;\hat V)\times Z^2(Q;V)$ is a compatible pair of cocycles, with $\eta$ essentially bijective, and $G$ is the extension of $V$ by $Q$ defined by $\beta$. Fix a Borel map $b \colon Q\times Q\to \T$ such
that $\psi =\partial b$, where~$\psi$ is given by~\eqref{eq:psi}. Define a $2$-cocycle $\omega\in Z^2(G;\T)$ by
\begin{equation}\label{eq:omega}
\omega(g_1,g_2):=\overline{\omega_\eta(g_1,g_2)}\,b (q_1,q_2),
\end{equation}
where $\omega_\eta$ is given by~\eqref{eq:omega-eta}.  As was already mentioned, if
$\eta=\partial\xi_0$, then we can take $b(q_1,q_2)=e^{-i\langle \xi_0,\beta(q_1,q_2)\rangle}$. In this case,  we get $\omega=\partial h$ for $h(g)=e^{i\langle \xi_0,v \rangle}$.

 The following construction is a key to everything that follows.

\begin{proposition}\label{prop:representation}
Define a map $\pi\colon G\to U(L^2(Q))$ by
\begin{align}
\label{def-pi}
(\pi(q,v)\vf)(q_0):=e^{-i \langle\eta(q_0),v\rangle}\,b (q_0^{-1},q)\,\vf\big(q^{-1}q_0\big),\quad \vf\in L^2(Q).
\end{align}
This is a projective unitary representation with cocycle $\omega$, so that $\pi(g_1)\pi(g_2)=\omega(g_1,g_2)\,\pi(g_1g_2)$ for all $g_1,g_2\in G$.
Furthermore, $\pi$ is irreducible, square-integrable and the corresponding Duflo--Moore operator is the operator of multiplication by the function $\Delta^{-1}|_Q=|\cdot|\Delta_Q^{-1}$.
\end{proposition}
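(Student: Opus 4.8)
The plan is to verify the four claims in turn—the projective representation property, irreducibility, square-integrability, and identification of the Duflo--Moore operator—treating the cocycle identity as the computational core and the Galois-type structure as the conceptual core.

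First I would check that $\pi$ takes values in the unitaries: since $|e^{-i\langle\eta(q_0),v\rangle}|=1$ and $|b(q_0^{-1},q)|=1$, the map $\varphi\mapsto\pi(q,v)\varphi$ is, up to unimodular multipliers, the left-translation operator $\varphi\mapsto\varphi(q^{-1}\,\cdot\,)$, which is unitary on $L^2(Q)$ for the \emph{left} Haar measure. Next comes the cocycle computation $\pi(g_1)\pi(g_2)=\omega(g_1,g_2)\,\pi(g_1g_2)$. Writing $g_i=(q_i,v_i)$ and applying both sides to $\varphi$ at the point $q_0$, I would expand the left-hand side by composing the two operators, being careful that in $\pi(g_1)$ the multiplier $e^{-i\langle\eta(q_0),v_1\rangle}$ is evaluated at $q_0$, while in $\pi(g_2)$ the argument has already been shifted to $q_1^{-1}q_0$. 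The phase factors to reconcile are: the $\eta$-terms $e^{-i\langle\eta(q_0),v_1\rangle}e^{-i\langle\eta(q_1^{-1}q_0),v_2\rangle}$ against the single $\eta$-term of $\pi(g_1g_2)$ evaluated using the group law $(q_1,v_1)(q_2,v_2)=(q_1q_2,v_1+q_1v_2+\beta(q_1,q_2))$; and the $b$-terms $b(q_0^{-1},q_1)\,b(q_0^{-1}q_1,q_2)$ against $b(q_0^{-1},q_1q_2)$. The first reconciliation should, via the cocycle identity for $\eta$, produce exactly the factor $\omega_\eta$ together with a leftover $\beta$-pairing $e^{-i\langle\eta(\cdot),\beta(q_1,q_2)\rangle}$; the second, via the coboundary relation $\psi=\partial b$, should absorb precisely that leftover $\beta$-pairing (this is the whole point of the definition \eqref{eq:psi} and the choice of $b$). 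What survives is $\overline{\omega_\eta(g_1,g_2)}\,b(q_1,q_2)=\omega(g_1,g_2)$, as claimed. I expect this phase bookkeeping—matching $\eta$-cocycle, $b$-coboundary, and $\beta$-cocycle terms simultaneously at a shifted argument—to be the main obstacle, purely as a matter of not dropping or misplacing a factor.

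For irreducibility I would use the essential bijectivity of $\eta$. The operators $\pi(0,v)$ act as multiplication by $e^{-i\langle\eta(q_0),v\rangle}$ (note $b(q_0^{-1},0)=1$ after normalizing $b$), so as $v$ ranges over $V$ these generate the von Neumann algebra of multiplication by functions of $\eta(q_0)$; because $\eta$ is a measure class isomorphism $Q\to\hat V$, the functions $q_0\mapsto f(\eta(q_0))$ exhaust $L^\infty(Q)$ up to null sets, so $\pi(V)''\supseteq L^\infty(Q)$. Meanwhile the operators $\pi(q,0)$ implement (projectively) left translation, and a bounded operator commuting with all of $L^\infty(Q)$ is itself a multiplication operator, which can commute with translations only if it is a scalar. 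Hence the commutant is trivial and $\pi$ is irreducible.

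For square-integrability and the Duflo--Moore operator, the strategy is to compute a matrix coefficient and test it against the orthogonality relations. Taking $\varphi,\varphi'\in L^2(Q)$, the coefficient $(q,v)\mapsto\langle\pi(q,v)\varphi,\varphi'\rangle=\int_Q e^{-i\langle\eta(q_0),v\rangle}\,b(q_0^{-1},q)\,\varphi(q^{-1}q_0)\,\overline{\varphi'(q_0)}\,dq_0$ has, in the $v$-variable, the shape of a Fourier transform on $\hat V$ composed with the change of variables $q_0\mapsto\eta(q_0)$. Here Lemma~\ref{lem:normalization} is exactly what I need: it turns integration $\int_Q\cdot\,\frac{dq}{|q|}$ into integration $\int_{\hat V}\cdot\,d\xi$, so the $L^2$-norm over $G$ of the matrix coefficient unwinds into a product of a Plancherel factor on $\hat V$ and an $L^2$-pairing on $Q$. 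Carrying out this integration—using Plancherel for $\CF_V$ to handle the $v$-integral and then Lemma~\ref{lem:normalization} for the remaining $Q$-integral, while keeping track of the modular factor $|q|$ coming from $d(q,v)=\frac{dq\,dv}{|q|}$—should yield the orthogonality relation with the weight $|\cdot|\Delta_Q^{-1}=\Delta^{-1}|_Q$ appearing as a multiplication operator sandwiched on the appropriate side. This pins down the Duflo--Moore operator as multiplication by $\Delta^{-1}|_Q$ and simultaneously confirms square-integrability. The delicate point in this last step is ensuring the modular weight lands on the correct vector and with the correct power; I would double-check it by verifying formal consistency with the semi-invariance relation the Duflo--Moore operator must satisfy, namely $D\pi(g)=\Delta(g)^{1/2}\pi(g)D$ up to the cocycle.
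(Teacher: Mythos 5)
Your proposal is correct and follows essentially the same route as the paper: the same phase bookkeeping (matching the $\eta$-cocycle identity, the leftover $\beta$-pairing, and $\psi=\partial b$) for the projective relation, the same commutant argument for irreducibility (the restriction to $V$ generates the multiplication algebra $L^\infty(Q)$ via essential bijectivity of $\eta$, and translation-invariance forces scalars), and the same Plancherel-plus-Lemma~\ref{lem:normalization} computation of $\int_G|\langle\vf_1,\pi(g)\vf_2\rangle|^2\,dg$ to identify the Duflo--Moore operator. The only slips are cosmetic: the identity of $Q$ is $e$, not $0$, and the normalization $b(q_0^{-1},e)=1$ is not yet assumed at this point of the paper, though this does not affect the irreducibility argument.
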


Note that the restriction of $\pi$ to $V$ is just the regular representation under the identification of $L^2(V)$ with $L^2(Q)$ by means of the Fourier transform $L^2(V)\to L^2(\hat V)$ and the unitary isomorphism $L^2(\hat V)\to L^2(Q)$, $f\mapsto |\cdot|^{-1/2}f\circ\eta$. On the other hand, if $G=Q\ltimes V$ and $b\equiv1$, then the restriction of $\pi$ to $Q$ is the regular representation. Therefore for general $G$ the introduction of the factor $b (q_0^{-1},q)$ is what it takes to get a projective representation of $G$. This is the same factor that we would have used if $b$ was a $2$-cocycle and we wanted to define a projective representation of $Q$.

\begin{proof}[Proof of Proposition~\ref{prop:representation}]
We have for $\vf\in L^2(Q)$:
\begin{align*}
(\pi(g_1g_2)\vf)(q_0)&=e^{-i \langle\eta(q_0),v_1+q_1v_2+\beta(q_1,q_2)\rangle}\,b (q_0^{-1},q_1q_2)
\, \vf\big((q_1q_2)^{-1}q_0\big)\\
&=e^{-i \langle\eta(q_0),v_1+q_1v_2\rangle}\,\overline{\psi(q_0^{-1},q_1,q_2)}\,b (q_0^{-1},q_1q_2)
\, \vf\big((q_1q_2)^{-1}q_0\big)\\
&=e^{-i \langle\eta(q_0),v_1+q_1v_2\rangle}\,\overline{b (q_1,q_2)}\,
b (q_0^{-1}q_1,q_2)\,b (q_0^{-1},q_1)\, \vf\big((q_1q_2)^{-1}q_0\big),
\end{align*}
and
\begin{align*}
(\pi(g_1)\pi(g_2))\vf(q_0)&=e^{-i \langle\eta(q_0),v_1\rangle}e^{-i \langle\eta(q_1^{-1}q_0),v_2\rangle}\,b (q_0^{-1},q_1)
\,b (q_0^{-1}q_1,q_2)\,\vf\big((q_1q_2)^{-1}q_0\big)\\
&=e^{-i \langle\eta(q_0),v_1+q_1v_2\rangle}\,\overline{\omega_\eta(g_1,g_2)}\,b (q_0^{-1},q_1)
\,b (q_0^{-1}q_1,q_2)
\, \vf\big((q_1q_2)^{-1}q_0\big).
\end{align*}
This gives the first claim of the proposition.

\smallskip

Irreducibility of $\pi$ follows by a standard argument, see, e.g., \cite{GM}*{Lemma 3.1}: Since the restriction of $\pi$ to $V$ is equivalent to the regular representation, every bounded operator on~$L^2(Q)$ commuting with $\pi(V)$ is the operator $m_f$ of multiplication by a function $f\in L^\infty(Q)$. But then for $m_f$ to commute with $\pi(G)$ we need~$f$ to be invariant under the left translations by~$Q$, hence~$f$ must be constant.

\smallskip

Regarding square-integrability and the Duflo--Moore operator, we shall prove a slightly more general statement. Namely, given a Borel function  $\Psi:Q\times Q\to \T$,
consider the Borel map $G\ni g\mapsto \tilde\pi(g)\in U(L^2(Q))$ defined by
$$
\tilde\pi(q,v)\vf(q_0):=e^{-i \langle\eta(q_0),v\rangle}\,\Psi(q,q_0)\,\vf\big(q^{-1}q_0\big),\quad  \vf\in L^2(Q).
$$
We are going to prove that for all $\vf_1,\vf_2\in C_c(Q)$ we have
$$
\int_G \big|\langle\vf_1,\tilde\pi(g)\vf_2\rangle\big|^2\,dg=\|\vf_1\|_2^2\,\|\Delta^{-1/2}|_Q\,\vf_2\|_2^2.
$$

Indeed, by our normalization of Haar measures given by Lemma~\ref{lem:normalization} we have
\begin{align*}
\langle\vf_1,\tilde\pi(q,v)\vf_2\rangle&=\int \overline{\vf_1(q_0)} \,e^{-i \langle\eta(q_0),v\rangle}\,\Psi(q,q_0)\,\vf_2\big(q^{-1}q_0\big)\,
\,dq_0\\
&=\int \overline{\vf_1(\eta^{-1}(\xi))} \,e^{-i \langle\xi,v\rangle}\,\Psi(q,\eta^{-1}(\xi))\,\vf_2\big(\eta^{-1}({q^{-1}}^\flat\xi+\eta(q^{-1}))\big)\,
|\eta^{-1}(\xi)|\,d\xi\\
&=(\CF_V^* f_q)(-v),
\end{align*}
where $f_q\in L^2(\hat V)$ is defined by
$$
f_q(\xi):=\overline{\vf_1(\eta^{-1}(\xi))} \,\Psi(q,\eta^{-1}(\xi))\,\vf_2\big(\eta^{-1}({q^{-1}}^\flat\xi+\eta(q^{-1}))\big)\,
|\eta^{-1}(\xi)|.
$$
Therefore we deduce by the Plancherel theorem:
\begin{align*}
\int \big|\langle\vf_1,\tilde\pi(q,v)\vf_2\rangle\big|^2 \frac{dq\,dv}{|q|}&=\int |f_q(\xi)|^2  \frac{dq\,d\xi}{|q|}\\
&=\int |\vf_1(\eta^{-1}(\xi))|^2 \,\big|\vf_2\big(\eta^{-1}({q^{-1}}^\flat\xi+\eta(q^{-1}))\big)\big|^2|\eta^{-1}(\xi)|^2\, \frac{dq\,d\xi}{|q|}\\
&=\int |\vf_1(q_0)|^2|\vf_2(q^{-1}q_0)|^2|q^{-1}q_0|\, dq\,dq_0\\
&=\int |\vf_1(q_0)|^2|\vf_2(q^{-1})|^2\frac{dq\,dq_0}{|q|}\\
&=\int |\vf_1(q_0)|^2|\vf_2(q)|^2\frac{|q|}{\Delta_Q(q)}dq\,dq_0,
\end{align*}
which concludes the proof, since $\Delta(q)=|q|^{-1}\Delta_Q(q)$.
\end{proof}

Let $D$ be the Duflo--Moore operator and consider the weight $\tilde\varphi:=\Tr(D^{1/2}\,\cdot\, D^{1/2})$ on~$\N:=B(L^2(Q))$. Then the action $\Ad\pi$ of $G$ on $\N$ is ergodic, integrable and
$$
\tilde\varphi(T)1=\int_G(\Ad\pi(g))(T)\,dg\quad\text{for}\quad T\in \N_+,
$$
see~\cite{BGNT3}*{Section~2.1}. The corresponding Galois map is defined by
$$
{\mathcal G}\colon L^2(\N,\tilde\varphi)\otimes L^2(\N,\tilde\varphi)\to L^2(G;L^2(\N,\tilde\varphi)),
$$
$$
{\mathcal G}\big(\tilde\Lambda(S)\otimes \tilde\Lambda(T)\big)(g):=\tilde\Lambda(\pi(g)S\pi(g)^*T),
$$
where $\tilde\Lambda\colon{\mathfrak N}_{\tilde\varphi}\to L^2(\N,\tilde\varphi)$ denotes the GNS-map and ${\mathfrak N}_{\tilde\varphi}=\{T:\tilde\varphi(T^*T)<\infty\}$. This map is isometric, and the pair $(\N,\Ad\pi)$ is called a $G$-Galois object if $\mathcal G$ is unitary. Note that for finite~$G$ the map $\G$ is unitary already for dimension reasons.

\begin{theorem}\label{thm:key}
The pair $(B(L^2(Q)),\Ad\pi)$ is a $G$-Galois object.
\end{theorem}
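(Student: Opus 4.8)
The plan is to exploit that $\G$ is already isometric: an isometry has closed range, so it is unitary as soon as its range is dense. I would therefore aim to show that the range of $\G$ is all of $L^2(G;L^2(\N,\tilde\varphi))$. First I would record the effect of $\G$ on the rank-one operators $\theta_{\xi,\zeta}$ (defined by $\theta_{\xi,\zeta}\psi=\langle\zeta,\psi\rangle\xi$), which form a total family. Using $\tilde\Lambda(T)=TD^{1/2}$ to identify $L^2(\N,\tilde\varphi)$ with $\HS(L^2(Q))=L^2(Q)\otimes\overline{L^2(Q)}$, together with $\pi(g)\theta_{\xi,\zeta}\pi(g)^*=\theta_{\pi(g)\xi,\pi(g)\zeta}$ and $\theta_{a,b}\theta_{c,d}=\langle b,c\rangle\theta_{a,d}$, one gets, for $\zeta,\beta$ in the domain of $D^{1/2}$ (e.g.\ in $C_c(Q)$) and arbitrary $\xi,\alpha\in L^2(Q)$,
\[
\G\big(\tilde\Lambda(\theta_{\xi,\zeta})\otimes\tilde\Lambda(\theta_{\alpha,\beta})\big)(g)=\langle\pi(g)\zeta,\alpha\rangle\,\pi(g)\xi\otimes\overline{D^{1/2}\beta}.
\]

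Next I would peel off the two ``inert'' tensor legs to reduce density of the range to a statement about $\pi$ alone. Since $D^{1/2}$ is multiplication by the nonvanishing function $\Delta^{-1/2}|_Q$, the vectors $D^{1/2}\beta$ with $\beta\in C_c(Q)$ are dense in $L^2(Q)$, so the last factor $\overline{D^{1/2}\beta}$ sweeps out $\overline{L^2(Q)}$; it therefore suffices to show that the functions $g\mapsto\langle\pi(g)\zeta,\alpha\rangle\,\pi(g)\xi$ are dense in $L^2(G;L^2(Q))$. Conjugating by the unitary $(Uk)(g):=\pi(g)^*k(g)$ of $L^2(G;L^2(Q))$ turns these into $g\mapsto\langle\pi(g)\zeta,\alpha\rangle\,\xi=\overline{c_{\alpha,\zeta}(g)}\,\xi$, where $c_{\alpha,\zeta}(g):=\langle\alpha,\pi(g)\zeta\rangle$ is the matrix coefficient. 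As $\xi$ ranges over $L^2(Q)$, density of this family is equivalent to density of the matrix coefficients $\{c_{\alpha,\zeta}\}$ in $L^2(G)$; in other words $\G$ is unitary precisely when the Kohn--Nirenberg quantization map $\Op\colon L^2(G)\to\HS(L^2(Q))$ (the adjoint of the isometric coefficient map $(\alpha,\zeta)\mapsto c_{\alpha,\zeta}$, whose isometry property is the orthogonality relation of Proposition~\ref{prop:representation}) is onto.

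The heart of the matter is thus the surjectivity of the coefficient map, and this is exactly where essential bijectivity of $\eta$ enters. I would prove it by checking that a function $F\in L^2(G)$ orthogonal to every $c_{\alpha,\zeta}$ must vanish. Orthogonality for all $\alpha$ gives $\int_G\overline{F(g)}\,\pi(g)\zeta\,dg=0$ in $L^2(Q)$ for every $\zeta\in C_c(Q)$. Evaluating at $q_0$, carrying out the integral over $V$ first, and writing $\hat F(q,\xi)$ for the $V$-Fourier transform $\CF_V$ of $v\mapsto F(q,v)$, the defining formula~\eqref{def-pi} for $\pi$ turns this into
\[
\int_Q\overline{\hat F\big(q,-\eta(q_0)\big)}\,b(q_0^{-1},q)\,\zeta(q^{-1}q_0)\,\frac{dq}{|q|}=0\quad\text{for a.e. }q_0.
\]
Since $\zeta\in C_c(Q)$ is arbitrary and $|b|=1$, this forces $\hat F(q,-\eta(q_0))=0$ for almost all $q$, for almost every $q_0$. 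Because $\eta$ is a measure class isomorphism, $-\eta(q_0)$ ranges over almost all of $\hat V$, so $\hat F\equiv0$ and hence $F=0$ by Plancherel (unitarity of $\CF_V$). This yields density of the coefficients, hence of the range of $\G$, and therefore $\G$ is unitary and $(B(L^2(Q)),\Ad\pi)$ is a $G$-Galois object.

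I expect the main obstacle to be not any single identity but the measure-theoretic bookkeeping in the last step: justifying the interchange of integrations, the passage to almost-everywhere statements in the two variables $q$ and $q_0$ via Fubini, and the handling of the unbounded Duflo--Moore operator $D^{1/2}$ when choosing the dense sets of test vectors and the conjugate-linear identifications of $\HS(L^2(Q))$. The algebraic reductions in the first two paragraphs are routine once the rank-one formula is in place; it is the surjectivity of the quantization map---equivalently the fact that essential bijectivity of $\eta$ makes the matrix coefficients of $\pi$ fill out all of $L^2(G)$---that carries the real content.
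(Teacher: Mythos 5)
Your argument is correct in substance, but it takes a genuinely different route from the paper's. The paper identifies $L^2(\N,\tilde\varphi)$ with $L^2(Q\times Q,dq\times d\nu)$ via kernels of kernel operators, writes ${\mathcal G}(k_1\otimes k_2)$ as an explicit integral, and factors ${\mathcal G}$ as a composition of two manifestly unitary operators with an operator $U_\Xi$ given by composition with a point transformation $\Xi$ of $Q\times\hat V\times Q$; since ${\mathcal G}$ is isometric so is $U_\Xi$, and unitarity follows because essential bijectivity of $\eta$ makes $\Xi$ a measure class isomorphism. You instead evaluate ${\mathcal G}$ on rank-one operators and reduce density of its range, after stripping the two inert tensor legs and conjugating by $g\mapsto\pi(g)^*$, to completeness of the matrix coefficients $c_{\alpha,\zeta}$ of $\pi$ in $L^2(G)$, which you then prove by an orthogonality argument hinging again on essential bijectivity. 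Both proofs consume exactly the same inputs (isometry of ${\mathcal G}$ plus essential bijectivity of $\eta$); yours is more representation-theoretic and avoids the explicit kernel formula, though the paper's computation is not wasted since it reappears in Lemma~\ref{Gomega} to compute $\Omega$.

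Two points to tighten. First, the parenthetical identification of $\Op$ with the adjoint of the coefficient map is unwarranted: Proposition~\ref{UEQ} defines $\Op$ as a Kohn--Nirenberg-type unitary, which is a different intertwiner of $\lambda$ with $\Ad\pi$ from the wavelet-transform adjoint; fortunately your proof never uses this identification. Second, for a general $F\in L^2(G)$ the slice $F(q,\cdot)$ need not be integrable on $V$, so you cannot literally ``carry out the integral over $V$ first'' by Fubini. The clean fix is to pair $F(q,\cdot)$ with $c_{\alpha,\zeta}(q,\cdot)$ by Parseval in the $v$-variable, using the identity $c_{\alpha,\zeta}(q,v)=(\CF_V^*f_q)(-v)$ established in the proof of Proposition~\ref{prop:representation}; this lands you on the same double integral over $(q,q_0)$, now absolutely convergent by Cauchy--Schwarz, after which the almost-everywhere bookkeeping you flag (a countable dense family of test functions $\zeta$, and the fact that $(q,q_0)\mapsto(q,-\eta(q_0))$ is a measure class isomorphism onto $Q\times\hat V$) goes through exactly as you describe.
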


\begin{proof}
Similarly to the proof of \cite{GM}*{Proposition 3.2}, we can describe the Galois map as follows. The GNS-space of the trace $\Tr$ on $B(L^2(Q))$ can be identified with the space $\HS(L^2(Q))$ of Hilbert--Schmidt operators. Hence the same space can be taken for the GNS-space of $\tilde\varphi=\Tr(D^{1/2}\,\cdot\, D^{1/2})$ and then the GNS-map is $\tilde\Lambda(T)=TD^{1/2}$. The space $\HS(L^2(Q))$, in turn, can be identified with $L^2(Q\times Q)$. Note that by Proposition~\ref{prop:representation}, if $T\in B(L^2(Q))$  is a kernel operator with kernel $k\in C_c(Q\times Q)$, then $TD^{1/2}$ is also a kernel operator with kernel
$$
k\big(1\otimes \Delta^{-1/2}|_Q\big)\in L^2(Q\times Q).
$$
We can therefore take for the GNS-space of $\tilde\vf$ the Hilbert space
$$
L^2(Q\times Q,dq\times d\nu),\quad\mbox{where}\quad d\nu(q)=\Delta(q)^{-1}\,dq ,
$$
and then the GNS-map $\tilde\Lambda$ is just the map that associates to a kernel operator its kernel.

Observe next that the operator $\Ad\pi(g)$ on $\HS(L^2(Q))$ corresponds to the operator $\pi(g)\otimes\pi^c(g)$ on $L^2(Q\times Q)$, where $\pi^c(g)\vf=\overline{\pi(g)\bar\vf}$. Hence the Galois map is a map
\begin{align*}
{\mathcal G}: L^2(Q\times Q,dq\times d\nu)\otimes L^2(Q\times Q,dq\times d\nu)\to
L^2\big(G,L^2(Q\times Q,dq\times d\nu)\big)
\end{align*}
such that for $k_1,k_2\in C_c(Q\times Q)$ we have
\begin{align*}
{\mathcal G}(k_1\otimes k_2)(g;q_1,q_2)&=\int\big((\pi(g)\otimes\pi^c(g))k_1\big)(q_1,q_0)\,k_2(q_0,q_2)\,
dq_0 \\
&=\int e^{-i \langle\eta(q_1)-\eta(q_0),v\rangle}\,b (q_1^{-1},q)\,\overline{b (q_0^{-1},q)}\, k_1\big(q^{-1}q_1,q^{-1}q_0\big)\,k_2(q_0,q_2)\,
dq_0\\
&=\int e^{i \langle\xi,v\rangle}\,b (q_1^{-1},q)\,\overline{b (\eta^{-1}(\xi+\eta(q_1))^{-1},q)}\, \\
&\qquad k_1\big(q^{-1}q_1,q^{-1}\eta^{-1}(\xi+\eta(q_1))\big)
\,k_2(\eta^{-1}(\xi+\eta(q_1)),q_2)\,
|\eta^{-1}(\xi+\eta(q_1))|\,d\xi.
\end{align*}

From this we see that $\G$ is the composition of three operators, two of which are the inverse Fourier transform in one variable and the multiplication by a $\T$-valued function, which are both unitary, and one is the operator
$$
U_\Xi\colon L^2(Q\times Q\times Q\times Q, dq\times d\nu\times dq\times d\nu)\to L^2(Q\times\hat V\times Q\times Q,\frac{dq}{|q|}\times d\xi\times dq\times d\nu),
$$
$$
(U_\Xi\vf)(q,\xi,q_1,q_2)=|\eta^{-1}(\xi+\eta(q_1))|\,\vf\big(\Xi(q,\xi,q_1),q_2\big),
$$
where $\Xi\colon Q\times\hat V\times Q\to Q\times Q\times Q$ is the almost everywhere defined map
$$
\Xi(q,\xi,q_1)=(q^{-1}q_1,q^{-1}\eta^{-1}(\xi+\eta(q_1)),\eta^{-1}(\xi+\eta(q_1))).
$$
As $\G$ is isometric, the operator $U_\Xi$ must also be isometric, and hence in order to prove that $\G$ is unitary it suffices to show that $\Xi$ is a measure class isomorphism. But this is clear from the assumption of essential bijectivity of $\eta$.
\end{proof}

By \cite{BGNT3}*{Proposition~2.9}, in order to conclude that the Galois object $(B(L^2(Q)),\Ad\pi)$ is defined by a unitary dual $2$-cocycle on $G$, it remains to find a quantization map $\Op\colon L^2(G)\to \HS(L^2(Q))$, that is, a unitary operator that intertwines the left regular representation $\lambda$ of $G$ with $\Ad\pi$. In~\cite{BGNT3} we used the Kohn--Nirenberg quantization to define such a map. A modification of that construction suitable for projective representations has been suggested in~\cite{GM}. In a similar way we get the following result.

\begin{proposition}
\label{UEQ}
For every $f\in C_c(G)$, let $\Op(f)$ be the sesquilinear form on $C_c(Q)$ given by
 $$
\Op(f)[\vf_1 ,\vf_2 ]:= \int_{G} f(g)\, \bigg(\int_Q(\pi(g)^*\vf_1) (q_0) \,|q_0|^{-1/2}\,dq_0\bigg)\overline{( \pi(g)^*\vf_2 )(e)}
\, dg.
$$
Then we have
$$
\Op(f)[\vf_1 ,\vf_2 ]=\int_{Q\times Q}K(f)(q_0,q)\,\vf_1 (q)\,\overline{\vf_2 (q_0)}\,
dq_0\,  dq,
$$
where
\begin{align}
 \label{Komega}
 K(f)(q_0,q)=|q_0q|^{-1/2}\,b (q^{-1}_0,q_0)\,\overline{b (q^{-1},q_0)}\,(\CF_V f)(q_0, \eta(q_0)-\eta(q)).
\end{align}
Consequently, the map $\Op$ extends to a unitary operator $L^2(G)\to{\HS}(L^2(Q))$ that intertwines the representations $\lambda$ and $\Ad\pi$ of $G$.
\end{proposition}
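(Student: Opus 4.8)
The plan is to first establish the explicit kernel~\eqref{Komega} by a direct computation, and then read off unitarity and the intertwining property from it. The starting point is the adjoint of $\pi$. Writing $g=(q,v)$ and using the left invariance of $dq$ through the substitution $q_0\mapsto qq_0$ in the defining pairing, formula~\eqref{def-pi} gives
\begin{equation*}
(\pi(q,v)^*\vf)(q_0)=e^{i\langle\eta(qq_0),v\rangle}\,\overline{b(q_0^{-1}q^{-1},q)}\,\vf(qq_0).
\end{equation*}
I would substitute this into the two inner factors of $\Op(f)[\vf_1,\vf_2]$. For the first factor, the change of variable $q_0\mapsto q^{-1}q_0$ turns $\int_Q(\pi(g)^*\vf_1)(q_0)|q_0|^{-1/2}\,dq_0$ into an integral in which $\vf_1$ is untranslated, producing a modulus factor $|q|^{1/2}$; the second factor is simply $(\pi(g)^*\vf_2)(e)=e^{i\langle\eta(q),v\rangle}\overline{b(q^{-1},q)}\vf_2(q)$. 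After inserting $dg=|q|^{-1}dq\,dv$, the only $v$-dependence is carried by $f(q,v)$ and an exponential of the form $e^{i\langle\eta(q')-\eta(q),v\rangle}$ in the two $Q$-variables $q,q'$, so integrating out $v$ yields $\CF_V f$ evaluated in the second variable at a difference of the two $\eta$-values. Collecting the surviving modulus and $b$-factors and relabelling the $Q$-variables to match gives exactly~\eqref{Komega}; this part is bookkeeping once the adjoint is recorded.

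With~\eqref{Komega} in hand, unitarity follows by computing the Hilbert--Schmidt norm. Under the identification of $\HS(L^2(Q))$ with $L^2(Q\times Q)$, the form $\Op(f)$ is the kernel operator with kernel $K(f)$, so $\|\Op(f)\|_{\HS}^2=\int_{Q\times Q}|K(f)(q_0,q)|^2\,dq\,dq_0$. Since the $b$-factors are unimodular, this equals $\int_{Q\times Q}|q_0q|^{-1}|(\CF_V f)(q_0,\eta(q_0)-\eta(q))|^2\,dq\,dq_0$. For fixed $q_0$ I would write $|q_0q|^{-1}dq=|q_0|^{-1}|q|^{-1}dq$ and apply Lemma~\ref{lem:normalization} to $\int_Q|(\CF_V f)(q_0,\eta(q_0)-\eta(q))|^2\,|q|^{-1}dq$, which by the normalization of the Haar measure on $Q$ becomes $\int_{\hat V}|(\CF_V f)(q_0,\xi)|^2\,d\xi$ after the translation $\xi\mapsto\eta(q_0)-\xi$ on $\hat V$. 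Plancherel for $\CF_V$ then turns this into $\int_V|f(q_0,v)|^2\,dv$, and integrating against $|q_0|^{-1}dq_0$ recovers $\int_G|f(g)|^2\,dg=\|f\|_2^2$. Hence $\Op$ extends to an isometry of $L^2(G)$. For surjectivity I would note that~\eqref{Komega} exhibits $\Op$ as a composition of $\CF_V$, multiplication by unimodular and modulus factors, and pullback along $(q_0,q)\mapsto(q_0,\eta(q_0)-\eta(q))$; the last is a measure class isomorphism precisely because $\eta$ is essentially bijective, and each factor is invertible, so $\Op$ is onto and therefore unitary.

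The intertwining relation $\Op(\lambda(g_0)f)=\Ad\pi(g_0)(\Op(f))$ is then immediate from the definition of $\Op$ as a form together with the projective relation of Proposition~\ref{prop:representation}. Replacing $f$ by $\lambda(g_0)f$ and substituting $g\mapsto g_0g$ (using left invariance of $dg$), both inner factors get evaluated at $g_0g$; writing $\pi(g_0g)^*=\omega(g_0,g)\,\pi(g)^*\pi(g_0)^*$ shows that the first factor acquires the scalar $\omega(g_0,g)$ and the second, conjugated, factor acquires $\overline{\omega(g_0,g)}$, and these cancel. What remains is $\Op(f)[\pi(g_0)^*\vf_1,\pi(g_0)^*\vf_2]=\langle\pi(g_0)\Op(f)\pi(g_0)^*\vf_1,\vf_2\rangle$, which is exactly the claim.

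The only genuine difficulty is the measure-theoretic bookkeeping: justifying the Fubini interchanges for $f\in C_c(G)$ and $\vf_i\in C_c(Q)$, and, above all, tracking the moduli $|q|$ through the substitutions so that Lemma~\ref{lem:normalization} and Plancherel apply cleanly. This is precisely where essential bijectivity of $\eta$ enters, once in the isometry estimate and once again for surjectivity; by contrast, the intertwining step is purely formal, the cocycle $\omega$ disappearing by design.
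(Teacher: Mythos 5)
Your proposal is correct and follows essentially the same route as the paper: compute $(\pi(g)^*\vf)(q_0)=e^{i\langle\eta(qq_0),v\rangle}\overline{b((qq_0)^{-1},q)}\vf(qq_0)$, change variables and integrate out $v$ to obtain the kernel \eqref{Komega}, then get isometry from Lemma~\ref{lem:normalization} and Plancherel, surjectivity from essential bijectivity of $(q_0,q)\mapsto(q_0,\eta(q_0)-\eta(q))$, and the intertwining relation from the cancellation of the cocycle factors (which the paper dismisses as ``immediate by definition''). No gaps.
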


Here $\CF_Vf$ denotes the partial Fourier transform of $f$ in the variable $v\in V$.

\begin{proof}
From definition \eqref{def-pi} we get that
$$
(\pi(g)^*\vf)(q_0)=e^{i \langle\eta(qq_0),v\rangle}\,\overline{b ((qq_0)^{-1},q)}\,\vf(qq_0).
$$
It follows that
\begin{align*}
\Op(f)[\vf_1 ,\vf_2 ]&=\int e^{i \langle\eta(qq_0)-\eta(q),v\rangle}\, f(q,v)\,\overline{b ((qq_0)^{-1},q)}\,b (q^{-1},q)\\
&\qquad\vf_1(qq_0)\,\overline{\vf_2(q)}\,|q_0|^{-1/2}\,|q|^{-1}\,dq\,dv\,dq_0\\
&=\int e^{i \langle \eta(q_0)-\eta(q),v\rangle}\, f(q,v)\,\overline{b (q_0^{-1},q)}\,b (q^{-1},q)\\
&\qquad\vf_1(q_0)\,\overline{\vf_2(q)}\,|qq_0|^{-1/2}\,dq\,dv\,dq_0,
\end{align*}
which is the required formula for $\Op(f)$ up to swapping $q$ with $q_0$.

Next, by our normalizations of Haar measures, we have
\begin{align*}
\|K(f)\|^2_{L^2(Q\times Q)}&=\int |(\CF_V f)(q_0, \eta(q_0)-\eta(q))|^2\,|q_0q|^{-1}\,dq_0\,dq\\
&=\int |(\CF_V f)(q_0, \eta(q_0)-\xi)|^2\,|q_0|^{-1}\,dq_0\,d\xi\\
&=\int |f(q_0,v)|^2\,|q_0|^{-1}\,dq_0\,dv=\|f\|^2_{L^2(G)}.
\end{align*}
Therefore $K$ extends to an isometry $L^2(G)\to L^2(Q\times Q)$. Similarly to the proof of Theorem~\ref{thm:key}, this operator is actually unitary, since the transformation $Q\times Q\to Q\times\hat V$, $(q_0,q)\mapsto(q_0,\eta(q_0)-\eta(q))$, is a measure class isomorphism. This unitary defines the required extension of $\Op$; to put it differently, $\Op\colon L^2(G)\to \HS(L^2(Q))$ is $K$ under the identification of $L^2(Q\times Q)$ with $\HS(L^2(Q))$.

Finally, to show that $\Op$ intertwines $\lambda$ with $\Ad\pi$ we need to check that $\Op(f(g^{-1}\,\cdot\,))[\vf_1 ,\vf_2 ]=\Op(f)[\pi(g)^*\vf_1 ,\pi(g)^*\vf_2 ]$. But this is immediate by definition.
\end{proof}

\bigskip

\section{Dual \texorpdfstring{$2$}{2}-cocycles and multiplicative unitaries}\label{sec:2mult}

Once we have constructed Galois objects and quantization maps, we can find explicit formulas for the corresponding dual cocycles and the associated multiplicative unitaries. This goes along the lines of~\citelist{\cite{BGNT3}\cite{GM}}, but the computations become more involved. In this section we present the final results and discuss their consequences, but defer the gory details to appendices.

\smallskip

We continue to work in the setting of Section~\ref{sec:rep}. In order to simplify some of the formulas, we will assume that $\beta(e,e)=0$ and hence, for all $q\in Q$, we have
\begin{equation}\label{eq:normalization2}
\beta(e,q)=\beta(q,e)=0.
\end{equation}
This is always possible to achieve by passing to a cohomologous cocycle. By rescaling we can also assume that the cochain $b$ such that $\psi=\partial b$ satisfies
$b(e,e)=1$. Condition \eqref{eq:normalization2} for all $q$ implies that
$$
\beta(q,q^{-1})=q\beta(q^{-1},q).
$$
It also implies that the cocycle $\psi$ is normalized, meaning that $\psi(q_1,q_2,q_3)=1$ if $q_i=e$ for some~$i$. Condition $\psi(q_1,e,q_3)=1$ together with~$b(e,e)=1$ imply, in turn, that
\begin{equation}\label{eq:normalization3}
b(e,q)=b(q,e)=1.
\end{equation}

We know by \cite{BGNT3}*{Proposition 2.9} that there is a dual unitary $2$-cocycle on $G$ defined by the formula
\begin{align}
\label{DBTF}
\Omega:=(\CJ\otimes\CJ)\tilde\G^*(1\otimes\CJ)\hat W,
\end{align}
where we recall that the unitary $\CJ=J\hat J\colon L^2(G)\to L^2(G)$ is given by
$$
(\CJ f)(g)=\Delta(g)^{-1/2}f(g^{-1}),
$$
$\hat W\colon L^2(G\times G)\to L^2(G\times G)$ is the multiplicative unitary of the dual quantum group $\hat G$,
$$
(\hat W f)(g,h)=f(hg,h),
$$
and  $\tilde\G$ is the Galois map  of the $G$-Galois object $(B(L^2(Q)),\Ad \pi )$, transported to $L^2(G\times G)$ by using the quantization map $\Op$.

\begin{theorem}
\label{OmegaF}
In the setting of Section~\ref{sec:rep}, consider the dual unitary $2$-cocycle $\Omega$ on $G$ defined by~\eqref{DBTF}. Then, assuming that the normalization conditions \eqref{eq:normalization2}--\eqref{eq:normalization3} are satisfied, for $f\in L^2(Q\times\hat V\times Q\times\hat V,|q|^{-1}dq\times d\xi\times |q|^{-1}dq\times d\xi)$, we have
\begin{multline}\label{eq:Omega}
\big((\CF_V\otimes\CF_V)\Omega (\CF_V^*\otimes\CF_V^*)f\big)(q_1,\xi_1;q_2,\xi_2) = |\eta^{-1}(\xi_1)|^{-1}\,
\overline {b(\eta^{-1}(\xi_2)^{-1},\eta^{-1}(\xi_1)^{-1})}\\
\overline{\psi(\eta^{-1}(\xi_2)^{-1},\eta^{-1}(\xi_1)^{-1},\eta^{-1}(\xi_1)q_2)}\,f(q_1, \xi_1 ;\eta^{-1}(\xi_1)q_2,\eta^{-1}(\xi_1)^\flat\xi_2).
\end{multline}
\end{theorem}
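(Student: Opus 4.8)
The plan is to compute $\Omega$ directly from its defining formula~\eqref{DBTF} by substituting the explicit descriptions of the four factors $\hat W$, $1\otimes\CJ$, $\tilde\G^*$ and $\CJ\otimes\CJ$ and composing them. Two of these are already given in closed form: $\hat W$ acts by $(\hat W f)(g,h)=f(hg,h)$ and $\CJ$ by $(\CJ f)(g)=\Delta(g)^{-1/2}f(g^{-1})$. The third factor, the transported Galois map $\tilde\G$, is assembled by combining the kernel formula for $\G$ established in the proof of Theorem~\ref{thm:key} with the kernel $K(f)$ of the quantization map $\Op$ from Proposition~\ref{UEQ}; taking the adjoint then yields $\tilde\G^*$. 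Since all four operators are unitary, the unitarity of $\Omega$ is automatic, and the sole task is to derive the stated formula~\eqref{eq:Omega}.

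The conjugation by $\CF_V\otimes\CF_V$ is precisely what makes the formula manageable. Under the partial Fourier transform in the $V$-variable, $L^2(G)=L^2(Q\times V)$ becomes $L^2(Q\times\hat V)$, the regular representation $\lambda|_V$ becomes multiplication by characters, and—because $\pi|_V$ is the regular representation in the same picture, as noted after Proposition~\ref{prop:representation}—all the integrals over $V$ produced by the character pairings $e^{i\langle\xi,v\rangle}$ collapse to point evaluations in the $\hat V$-variable. This is why the right-hand side of~\eqref{eq:Omega} is a pure ``multiplication plus substitution'' operator with no residual integral. I would therefore first rewrite both $K(f)$ and the kernel of $\G$ in the Fourier-transformed coordinates and express each of the four factors as an explicit operator on $L^2(Q\times\hat V\times Q\times\hat V)$.

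With these explicit forms in hand, the computation of $\Omega$ is the composition of the four operators applied from right to left. Each factor contributes a change of variables in the $Q$- and $\hat V$-coordinates, governed by $q\mapsto q^{-1}$ and $\xi\mapsto\eta(q)\pm\xi$ together with the substitutions $\eta^{-1}(\xi+\eta(q))$, and a Jacobian of the form $|\eta^{-1}(\cdot)|^{\pm1}$ coming from the normalization in Lemma~\ref{lem:normalization}. The phase factors are the delicate part: $\hat W$ and the $\CJ$'s contribute modular factors, while $\tilde\G^*$ contributes products of the cochain $b$ evaluated at several pairs. These must be reorganized using the identity $\psi=\partial b$ in the form $\psi(a,q_1,q_2)=b(a,q_1q_2)\,b(q_1,q_2)\,\overline{b(aq_1,q_2)}\,\overline{b(a,q_1)}$, the very identity exploited in the proof of Proposition~\ref{prop:representation}, so as to produce exactly the single $\overline{b}$-factor and single $\overline{\psi}$-factor appearing in~\eqref{eq:Omega}. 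The normalization conditions~\eqref{eq:normalization2}--\eqref{eq:normalization3} are then used to discard boundary terms $b(e,\cdot)$, $b(\cdot,e)$, $\beta(e,\cdot)$ that would otherwise clutter the intermediate expressions.

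I expect the main obstacle to be precisely this bookkeeping: keeping all the arguments of $\eta^{-1}$, the Jacobian powers $|\eta^{-1}(\cdot)|$, and the nested values of $b$ mutually consistent through four successive changes of variables, and then recognizing the correct grouping of the four $b$-factors into one $\overline{b}$ and one $\overline{\psi}$. Since the intermediate expressions are long, I would structure the argument as in the appendices, isolating the contribution of each operator and only composing at the end. The one genuinely analytic input—that every substitution used is a measure class isomorphism, so that the formal manipulations are legitimate and no measure-zero ambiguity survives—is supplied throughout by the essential bijectivity of $\eta$ through Lemma~\ref{lem:matched}.
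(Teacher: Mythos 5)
Your proposal follows essentially the same route as the paper's own proof in Appendix~\ref{ap:A}: the paper first derives the explicit formula for the transported Galois map $\tilde\G$ in Fourier coordinates (Lemma~\ref{Gomega}) from the kernels of $\G$ and $\Op$, composes it with the explicit actions of $\hat W$ and $\CJ$ (computing $\Omega^*$ first and then inverting, a minor reordering of your plan), and isolates the reorganization of the $b$-factors via $\psi=\partial b$ into the single $\overline b$- and $\overline\psi$-factors as a separate identity (Lemma~\ref{LDF}) --- exactly the step you correctly identify as the algebraic heart of the argument. Your outline is a faithful blueprint of that computation, with the only caveat that the measure-theoretic legitimacy of the substitutions rests on essential bijectivity directly and on the normalization of Lemma~\ref{lem:normalization} rather than on Lemma~\ref{lem:matched}.
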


A proof of this theorem is given in Appendix~\ref{ap:A}.

\smallskip

Note that if $\beta$ and $\eta$ are fixed, we still have some freedom in choosing the cochain $b$, namely, we can multiply it by any Borel $2$-cocycle $c\in Z^2(Q;\T)$. Then the $2$-cocycle $\omega$ defined by~\eqref{eq:omega} gets multiplied by the inflation of~$c$ to~$G$. Let $\Omega_c$ be the dual $2$-cocycle that we get when we replace $b$ by $bc$. By \cite{BGNT3}*{Corollary 2.8} we obtain the following result.

\begin{corollary}
The map $ c \mapsto \Omega_c$ yields an embedding  $H^2(Q;\mathbb T)/{\rm Ker}({\rm Inf}_Q^G)\hookrightarrow H^2(\hat G;\mathbb T)$, where ${\rm Inf}_Q^G\colon H^2(Q;\mathbb T)\to H^2(G;\mathbb T)$ is the  inflation homomorphism and where $H^2(\hat G;\mathbb T)$ denotes the set of equivalence classes of dual unitary $2$-cocycles for $G$.
\end{corollary}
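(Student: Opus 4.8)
The essential input is \cite{BGNT3}*{Corollary 2.8}, so my plan is to reduce the statement to a clean application of it. The first point to record is that the entire construction of Section~\ref{sec:rep} is uniform in the cochain $b$, and survives the replacement $b\rightsquigarrow bc$ for any $c\in Z^2(Q;\T)$. Indeed, since $\partial c=1$ we still have $\psi=\partial(bc)$ for $\psi$ as in~\eqref{eq:psi}, so Proposition~\ref{prop:representation}, Theorem~\ref{thm:key} and Proposition~\ref{UEQ} all apply verbatim with $b$ replaced by $bc$. This produces, for each such $c$, a $G$-Galois object together with a quantization map, hence a well-defined dual unitary $2$-cocycle $\Omega_c$. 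As already observed just before the statement, feeding $bc$ into~\eqref{eq:omega} multiplies the base cocycle $\omega$ by the inflation ${\rm Inf}_Q^G(c)$; equivalently, the projective representation built from $bc$ has cocycle $\omega\cdot{\rm Inf}_Q^G(c)$.

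Next I would invoke \cite{BGNT3}*{Corollary 2.8}, which describes exactly how the equivalence class of the resulting dual cocycle transforms when the $2$-cocycle of the underlying projective representation is multiplied by a scalar $2$-cocycle on $G$. I would use it in the form of an injective homomorphism $H^2(G;\T)\to H^2(\hat G;\T)$, $[\sigma]\mapsto[\Omega_\sigma]$, which sends the trivial class to the class of the base cocycle $\Omega$ and for which $[\Omega_\sigma]$ depends only on $[\sigma]$. In our situation $\sigma={\rm Inf}_Q^G(c)$, so that $[\Omega_c]$ is the image of $[{\rm Inf}_Q^G(c)]$ under this homomorphism. Consequently $[c]\mapsto[\Omega_c]$ is the composite of ${\rm Inf}_Q^G\colon H^2(Q;\T)\to H^2(G;\T)$ with the injective homomorphism furnished by \cite{BGNT3}*{Corollary 2.8}.

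The conclusion is then formal. As a composite of a homomorphism with an injective homomorphism, $[c]\mapsto[\Omega_c]$ is itself a homomorphism whose kernel is exactly ${\rm Ker}({\rm Inf}_Q^G)$; it therefore factors through an injective homomorphism on the quotient, which is precisely the asserted embedding $H^2(Q;\T)/{\rm Ker}({\rm Inf}_Q^G)\hookrightarrow H^2(\hat G;\T)$. In particular, well-definedness on cohomology classes (coboundaries $c=\partial h$ inflate to coboundaries on $G$ and so give equivalent dual cocycles) is already subsumed in the injectivity statement.

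The main obstacle I anticipate is matching the two descriptions of the twist: one must verify that the dual cocycle $\Omega_c$ obtained here by passing $bc$ through the explicit formulas for $\pi$, $\G$ and $\Op$ agrees, up to equivalence, with the cocycle produced by the abstract operation of \cite{BGNT3}*{Corollary 2.8} of multiplying the cocycle of a fixed projective representation by ${\rm Inf}_Q^G(c)$. This is where one uses that the construction is natural in $b$ and that the hypotheses of \cite{BGNT3}*{Corollary 2.8}---the Galois property and the existence of a quantization map---hold uniformly in $b$, as secured by Theorem~\ref{thm:key} and Proposition~\ref{UEQ}. Once this identification is in place, nothing further is required.
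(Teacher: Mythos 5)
Your proposal is correct and follows essentially the same route as the paper, which proves the corollary simply by observing that replacing $b$ with $bc$ multiplies $\omega$ by ${\rm Inf}_Q^G(c)$ and then citing \cite{BGNT3}*{Corollary 2.8}; you merely make explicit the factorization of $[c]\mapsto[\Omega_c]$ through the inflation map followed by the injection $[\omega\cdot\sigma]\mapsto[\Omega_\sigma]$ into the set $H^2(\hat G;\T)$ (note this target is only a set, so ``injective map'' is the accurate phrasing rather than ``injective homomorphism'').
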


Our next goal is to describe the multiplicative unitary of the quantum group $\hat G_\Omega$ such that its function algebra coincides with $W^*(G)$, but the coproduct is defined by $\Dhat_\Omega:=\Omega\Dhat(\cdot)\Omega^*$. For this recall that a pentagonal transformation is a
measure class isomorphism $w\colon X\times X \to X\times X$, for a standard measure space $X$, satisfying
the relation $w_{23}\circ w_{13}\circ w_{12} = w_{12}\circ w_{23}$. Given such a transformation, we get a unitary $U$ on $L^2(X\times X)$,
$$
(Uf)(x,y)=J_w(x,y)^{1/2}f(w(x,y)),
$$
where $J_w$ is a Radon--Nikodym derivative. It satisfies the pentagon relation
$$
U_{12}U_{13}U_{23} =U_{23}U_{12}.
$$
A measurable function $\theta\colon X\times X\to\T$ is called a pentagonal $2$-cocycle (for $w$), if the unitary~$\theta U$ still satisfies the pentagon relation, where by $\theta$ we mean the operator of multiplication by the function $\theta$.

\begin{theorem}\label{thm:mult}
In the setting of Section~\ref{sec:rep}, consider the dual unitary $2$-cocycle $\Omega$ on $G$ defined by~\eqref{DBTF}.  Then, assuming the normalization conditions \eqref{eq:normalization2}--\eqref{eq:normalization3}, the multiplicative unitary~$\hat W_\Omega$ of the deformed quantum group $(W^*(G),\Omega\Dhat(\cdot)\Omega^*)$ is described as follows: for $f\in L^2(Q\times\hat V\times Q\times\hat V,|q|^{-1}dq\times d\xi\times |q|^{-1}dq\times d\xi)$, we have
\begin{equation}\label{eq:mult}
(\CF_V\otimes\CF_V)\hat W_\Omega (\CF_V^*\otimes\CF_V^*)f=
 \Theta \,J_{ v }^{1/2}\, (f\circ  v ),
\end{equation}
where
$ v \colon (Q\times\hat V)\times (Q\times\hat V)\to (Q\times\hat V)\times (Q\times\hat V)$ is the pentagonal transformation defined by
$$
 v (q_1,\xi_1;q_2,\xi_2):=\big(q_2q_1,q_2^\flat\xi_1;\eta^{-1}(\eta(q_2^{-1})+\xi_1)^{-1}\eta^{-1}(\xi_1),{\eta^{-1}(\eta(q_2^{-1})+\xi_1)^{-1}}^\flat
({q_2^{-1}}^\flat\xi_2-\xi_1)\big),
$$
$J_{v}$ is the operator of multiplication by the function
$$
J_{v}(q_1,\xi_1;q_2,\xi_2):=|\eta^{-1}(\eta(q_2^{-1})+\xi_1)|^2,
$$
and $\Theta$ is the operator of multiplication by the pentagonal $2$-cocycle $(Q\times\hat V)\times (Q\times\hat V)\to\T$ given by
\begin{align}
 \Theta (q_1,\xi_1;q_2,\xi_2):=\ &
 \overline {b(\eta^{-1}({q_2^{-1}}^\flat\xi_2)^{-1}\eta^{-1}(\xi_1),\eta^{-1}(\xi_1)^{-1})}\,
b(\eta^{-1}(\xi_2)^{-1}\eta^{-1}(q_2^\flat\xi_1),\eta^{-1}(q_2^\flat\xi_1)^{-1})  \nonumber\\
 &\psi(\eta^{-1}(q_2^\flat\xi_1)^{-1},q_2,q_1)
\, \overline{\psi(\eta^{-1}(\xi_2- q_2^\flat\xi_1)^{-1},q_2\eta^{-1}(\xi_1),\eta^{-1}(\xi_1)^{-1})}\nonumber\\
 & \psi(\eta^{-1}(\xi_2- q_2^\flat\xi_1)^{-1},\eta^{-1}(q_2^\flat\xi_1),\eta^{-1}(q_2^\flat\xi_1)^{-1})\nonumber\\
 &\overline{\psi(\eta^{-1}(\xi_2- q_2^\flat\xi_1)^{-1},\eta^{-1}(q_2^\flat\xi_1),\eta^{-1}(q_2^\flat\xi_1)^{-1}q_2\eta^{-1}(\xi_1))}.\label{eq:Theta}
\end{align}
\end{theorem}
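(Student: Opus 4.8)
The plan is to start from the general description of the twisted multiplicative unitary and reduce it to an explicit composition of measure-theoretic operators. By the cocycle-twisting formalism recalled in \cite{BGNT3}*{Proposition~2.9} and the surrounding discussion, the multiplicative unitary of $\hat G_\Omega=(W^*(G),\Omega\Dhat(\cdot)\Omega^*)$ is $\hat W_\Omega=\hat W\Omega^*$. Substituting the expression \eqref{DBTF} for $\Omega$, and using that $\CJ^*=\CJ=\CJ^{-1}$ and that $\hat W$ is unitary, the two copies of $\hat W$ cancel and one obtains the clean identity
$$
\hat W_\Omega=(1\otimes\CJ)\,\tilde\G\,(\CJ\otimes\CJ).
$$
This is the starting point: it expresses $\hat W_\Omega$ solely through the Galois map $\tilde\G$ and the conjugation $\CJ$, so that the explicit formula \eqref{eq:Omega} for $\Omega$ from Theorem~\ref{OmegaF} is not even needed.

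Next I would make $\tilde\G$ completely explicit. Recall that $\tilde\G$ is the Galois map $\G$ of Theorem~\ref{thm:key}, transported from $\HS(L^2(Q))\otimes\HS(L^2(Q))$ to $L^2(G\times G)$ by the quantization map $\Op$ on the appropriate legs, i.e. $\tilde\G=(1\otimes\Op^{-1})\G(\Op\otimes\Op)$. Both ingredients are available in closed form: in the proof of Theorem~\ref{thm:key} the map $\G$ was decomposed as a partial inverse Fourier transform, multiplication by a $\T$-valued phase, and the operator $U_\Xi$ built from the almost-everywhere defined map $\Xi$ and the Jacobian $|\eta^{-1}(\cdots)|$; and $\Op$ is the kernel operator $K$ of \eqref{Komega}. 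Conjugating by $\CJ\otimes\CJ$ and $1\otimes\CJ$ (which insert the inversion $g\mapsto g^{-1}$ and the factors $\Delta^{-1/2}$), and finally by $\CF_V\otimes\CF_V$ as in the statement, I would compose all of these operators.

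The outcome then splits into three pieces exactly as in \eqref{eq:mult}: the composite of the underlying point transformations is the map $v$, the product of all Jacobian factors (the $|q|^{\pm1}$'s, the $\Delta$'s, and the $|\eta^{-1}(\cdots)|$'s) collapses to $J_v^{1/2}$, and the product of all $\T$-valued phases (the several occurrences of $b$ coming from $\Op$ and from $\pi$, together with the phase produced by $\G$) assembles into $\Theta$. Computing $v$ and $J_v$ is intricate but essentially mechanical, relying on the cocycle identity for $\eta$ and the normalizations \eqref{eq:normalization2}--\eqref{eq:normalization3}.

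The main obstacle is the identification of the phase $\Theta$ with the precise expression \eqref{eq:Theta}. The raw composition produces a long product of $b$-factors whose arguments must be rewritten, and this forces repeated use of the relation $\psi=\partial b$ together with the $3$-cocycle identity for $\psi$; it is exactly this bookkeeping that converts the product of $b$'s into the four $\psi$-factors and two $b$-factors of \eqref{eq:Theta}, and it is what makes the computation too tiresome for the main text, so I would defer it to an appendix. Finally, I would remark that the pentagon relation for $v$ and the pentagonal $2$-cocycle property of $\Theta$ need not be checked by hand: they hold automatically, since $\hat W_\Omega$ is a genuine multiplicative unitary by construction, which also furnishes a useful consistency check on the final formulas.
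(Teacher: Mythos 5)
Your proposal breaks down at the very first step. The identity $\hat W_\Omega=\hat W\Omega^*$ is false: the operator $\hat W\Omega^*$ is not the multiplicative unitary of $(W^*(G),\Omega\Dhat(\cdot)\Omega^*)$ but the unitary implementing the coaction on the Galois object, as in $\pi_\Omega(f)=(f\otimes\iota)(\hat W\Omega^*)$; its slices generate $W^*(\hat G;\Omega)\cong B(L^2(Q))$, a type I factor, not the deformed quantum group. Twisting the coproduct by $\Omega$ changes the Haar weight, hence the GNS data, and the correct expression (De Commer, Proposition~5.4) is
$$
\hat W_{\Omega}=(\tilde J\otimes\hat J)\,\Omega\,\hat W^*\,(J\otimes\hat J)\,\Omega^*,
$$
where $\tilde J$ is the modular conjugation of the canonical weight on $W^*(\hat G;\Omega)$. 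Note that $\Omega$ enters twice here, so the explicit formula of Theorem~\ref{OmegaF} is indispensable --- precisely the opposite of your claim that it is ``not even needed''. Moreover, computing $\tilde J$ is a substantial extra step: one must show $\tilde J=\CJ\,\mathcal U\,\CJ J$ for the unitary $\mathcal U$ determined by $\Op(f)^*=\Op(\mathcal U Jf)$, and then compute $\mathcal U$ explicitly (this is the content of Corollary~\ref{cor:tildeJ} and Lemma~\ref{lem:tildeJ}); this ingredient is entirely absent from your plan.

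A quick consistency check confirms the formula you propose cannot be right. Since $\Omega=(\CJ\otimes\CJ)\tilde\G^*(1\otimes\CJ)\hat W$, your candidate equals $(1\otimes\CJ)\tilde\G(\CJ\otimes\CJ)$, whose explicit form is governed by Lemma~\ref{Gomega}: the resulting Radon--Nikodym factor is of the shape $|q_1|\Delta(q_1)^{-1/2}\Delta(\eta^{-1}(\xi_1+\eta(q_2)))^{1/2}$ (times the contributions of the two $\CJ$'s), which does not reduce to $J_v^{1/2}=|\eta^{-1}(\eta(q_2^{-1})+\xi_1)|$, and the induced point transformation in the second leg does not agree with the second-leg component of $v$. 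The remainder of your outline (composing point transformations, Jacobians and phases, and invoking the pentagon relation for free because $\hat W_\Omega$ is a multiplicative unitary by construction) is sound in spirit and matches how the paper organizes the appendix, but it is applied to the wrong operator, so the proof as proposed would not yield the stated theorem.
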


A proof of this theorem is given in Appendix~\ref{ap:B}.

\smallskip

Let us draw some consequences of the explicit formula for $\hat W_\Omega$. By a result of Baaj--Skandalis~\cite{MR1652717} (see also \cite{MR1969723}*{Proposition~5.1} for a correction), under mild assumptions a pentagonal transformation $w\colon X\times X\to X\times X$ is defined by a matched pair of groups, that is, $X$ can be identified with a locally compact group $L$ having a matched pair of subgroups $(L_1,L_2)$ so that we have
$$
w(x,y)=(xp_1(p_2(x)^{-1}y),p_2(x)^{-1}y)
$$
almost everywhere, where $p_i\colon L_1L_2\cong L_1\times L_2\to L_i$ is the projection onto the $i$-th factor. The corresponding unitary is the multiplicative unitary of the bicrossed product $\hat L_1\bicrossl L_2$, which is a locally compact quantum group with function algebra $L^\infty(\hat L_1\bicrossl L_2)=L_1\ltimes L^\infty(L_2)$.

\begin{corollary}
The locally compact quantum group $(W^*(G),\Omega\hat\Delta(.)\Omega^*)$
is isomorphic to the cocycle bicrossed product quantum group defined
by the matched pair $(Q,Q_\eta)$ of subgroups of $Q\ltimes\hat V$ and the pentagonal $2$-cocycle $ \Theta\circ(T^{-1}\times T^{-1}) $, where $Q_\eta=\{(q,\eta(q)):q\in Q\}$ and $T\colon Q\ltimes\hat V\to Q\ltimes \hat V$ is the measure isomorphism defined by
$$
T(q,\xi):=(q^{-1}\eta^{-1}(\xi),{q^{-1}}^\flat\xi).
$$
\end{corollary}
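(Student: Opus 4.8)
The plan is to read off from Theorem~\ref{thm:mult} that $\hat W_\Omega$ is, up to the unitary change of variables $T$, the twisted pentagonal unitary of the matched pair $(Q,Q_\eta)$, and then to invoke the Baaj--Skandalis recognition theorem recalled just above. Theorem~\ref{thm:mult} already presents $(\CF_V\otimes\CF_V)\hat W_\Omega(\CF_V^*\otimes\CF_V^*)$ as $\Theta\,J_v^{1/2}\,(\,\cdot\circ v)$, the twisted unitary attached to the pentagonal transformation $v$ on $(Q\times\hat V)\times(Q\times\hat V)$. What remains is therefore structural: identify the measure space $Q\times\hat V$ with the group $Q\ltimes\hat V$ through $T$, and verify that under this identification $v$ becomes the canonical transformation of the matched pair $(Q,Q_\eta)$ while $\Theta$ becomes $\Theta\circ(T^{-1}\times T^{-1})$.

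First I would record the matched-pair data. By Lemma~\ref{lem:matched}, essential bijectivity of $\eta$ makes the product map $Q\times Q_\eta\to Q\ltimes\hat V$ a measure class isomorphism, so almost every $x\in Q\ltimes\hat V$ has a unique factorization $x=p_1(x)\,p_2(x)$ with $p_1(x)\in Q$ and $p_2(x)\in Q_\eta$; this is exactly the hypothesis under which Baaj--Skandalis applies. The associated pentagonal transformation and its twisted unitary are
$$
w(x,y)=\big(x\,p_1(p_2(x)^{-1}y),\,p_2(x)^{-1}y\big),\qquad (Uf)(x,y)=J_w(x,y)^{1/2}f(w(x,y)),
$$
and, by the quoted result, the twisted unitary is the multiplicative unitary of the cocycle bicrossed product of $(Q,Q_\eta)$.

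The heart of the argument is the explicit identity
$$
(T\times T)\circ v\circ(T^{-1}\times T^{-1})=w.
$$
Substituting $T(q,\xi)=(q^{-1}\eta^{-1}(\xi),{q^{-1}}^\flat\xi)$ and the expression for $v$ from Theorem~\ref{thm:mult}, one reduces both sides to the two-factor decomposition $x=p_1(x)p_2(x)$ in $Q\ltimes\hat V$; matching the four coordinates comes down to the group law of $Q\ltimes\hat V$ together with the cocycle identity $\eta(q_1q_2)=\eta(q_1)+q_1^\flat\eta(q_2)$. Granting this, conjugating $\Theta\,J_v^{1/2}\,(\,\cdot\circ v)$ by the unitary $U_T\otimes U_T$ induced by $T$ (in the direction that sends the multiplier $\Theta$ to $\Theta\circ(T^{-1}\times T^{-1})$) turns the composition part into $\,\cdot\circ w$ and carries $J_v$ to the Baaj--Skandalis factor $J_w$. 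Because conjugation by $U_T\otimes U_T$ implements an isomorphism of locally compact quantum groups, this identifies $(W^*(G),\Omega\Dhat(\cdot)\Omega^*)$ with the cocycle bicrossed product of $(Q,Q_\eta)$ twisted by $\Theta\circ(T^{-1}\times T^{-1})$, as claimed; that $\Theta\circ(T^{-1}\times T^{-1})$ is a genuine pentagonal $2$-cocycle for $w$ is inherited from the pentagon relation for $\Theta\,J_v^{1/2}\,(\,\cdot\circ v)$ established in Theorem~\ref{thm:mult}.

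The main obstacle is precisely the displayed identity $(T\times T)\circ v\circ(T^{-1}\times T^{-1})=w$. The formula for $v$ nests $\eta^{-1}$ with sums in $\hat V$ and the dual action~$\flat$, so the calculation requires care in tracking how $T$ disentangles these into the clean factorization through $Q$ and $Q_\eta$, and in checking that the Jacobian $J_v$ is transported to $J_w$ with the correct modular weight. Once the change of variables is organized so that the first slot of $v$ lands in $Q$ and the second encodes the $Q_\eta$-component, the remaining identifications are bookkeeping driven by the cocycle identity for $\eta$.
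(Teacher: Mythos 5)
Your proposal is correct and follows essentially the same route as the paper: both reduce the statement to the intertwining identity $(T\times T)\circ v\circ(T^{-1}\times T^{-1})=w$ for the matched pair $(Q,Q_\eta)$ of Lemma~\ref{lem:matched}, transport $\Theta$ to $\Theta\circ(T^{-1}\times T^{-1})$, and then invoke the bicrossed-product recognition results (the paper additionally cites Vaes--Vainerman for the \emph{cocycle} bicrossed product, since the Baaj--Skandalis statement recalled before the corollary covers only the untwisted case). Like the paper, you leave the coordinate verification of the intertwining identity as a computation (the paper delegates it to the proof of Theorem~4.1 of the earlier work with the substitutions $q^\flat\xi_0-\xi_0\mapsto\eta(q)$, $\phi^{-1}(\xi_0+\xi)\mapsto\eta^{-1}(\xi)$), so no genuine gap beyond what the paper itself elides.
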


We record the formula for $T^{-1}$ for later use:
\begin{equation}\label{eq:T-inverse}
T^{-1}(q,\xi):=(\eta^{-1}(\eta(q)-\xi)^{-1},{\eta^{-1}(\eta(q)-\xi)^{-1}}^\flat\xi).
\end{equation}

\bp
This is similar to analogous statements in \cites{BGNT3,GM}, so we will just stress the main points.

First of all recall that $(Q,Q_\eta)$ is a matched pair by Lemma~\ref{lem:matched}. The fact that the unitary on the right hand side of~\eqref{eq:mult} satisfies the pentagon relation is equivalent to saying that $v$ is a pentagonal transformation of the measurable space $(Q\times\hat V)\times(Q\times\hat V)$ and $\Theta$ is a pentagonal $2$-cocycle for~$v$. The transformation $v$ reduces to that obtained in~\cite{BGNT3} when $\eta(q)=q^\flat\xi_0-\xi_0$. If we replace the expressions $q^\flat\xi_0-\xi_0$ and $\phi^{-1}(\xi_0+\xi)$  in~\cite{BGNT3} by $\eta(q)$ and $\eta^{-1}(\xi)$, resp., then the formulas in the proof  of~\cite{BGNT3}*{Theorem~4.1} remain true for general essentially bijective cocycles. For example, the (almost everywhere defined) inverse of $v$ is given by
\begin{align*}
v^{-1}(q_1,\xi_1;q_2,\xi_2)&=\big(\eta^{-1}(\eta^{-1}(\xi_1)^\flat\eta(q_2))^{-1}q_1,{\eta^{-1}(\eta^{-1}(\xi_1)^\flat\eta(q_2))^{-1}}^\flat\xi_1;\\
&\qquad\qquad\eta^{-1}(\eta^{-1}(\xi_1)^\flat\eta(q_2)),\xi_1+\eta^{-1}(\xi_1)^\flat\xi_2\big).
\end{align*}
Then the same considerations as in that proof show that if we define $f_1\colon Q\ltimes\hat V\to Q$, $f_2\colon Q\ltimes \hat V\to Q_\eta$ and $T\colon Q\ltimes\hat V\to Q\ltimes\hat V$ by
$$
f_1(q,\xi):=q^{-1},\quad f_2(q,\xi):=(\eta^{-1}(\xi)^{-1},\eta(\eta^{-1}(\xi)^{-1})),\quad T(x):=f_1(x)f_2(x)^{-1},
$$
then $T\times T$ intertwines $v$ with the pentagonal transformation $w$ defined by the matched pair $(Q,Q_\eta)$.

Once we have identified $v$ with the pentagonal transformation defined by $(Q,Q_\eta)$, the pentagonal cocycle $\Theta$ becomes a $2$-cocycle in the measurable Kac cohomology of this pair~\cite{MR2115374} and hence, by results of~\cite{MR1970242}, $\hat W_\Omega$ is the multiplicative unitary of a cocycle bicrossed product as stated in the formulation.
\ep

A pentagonal $2$-cocycle $\theta$ for a pentagonal transformation $w$ is called a coboundary if it has the form
$$
\theta=\frac{f\otimes f}{(f\otimes f)\circ w}
$$
for a $\T$-valued measurable function $f$. We denote by $H^2_\pent(w;\T)$ the quotient of the group of pentagonal $2$-cocycles (identified almost everywhere) by the subgroup of coboundaries. When the pentagonal transformation $w$ is defined by a matched pair $(L_1,L_2)$, we denote this group by $H^2_\pent(L_1\bowtie L_2;\T)$.

\smallskip

By multiplying $b$ in Theorem~\ref{thm:mult} by $2$-cocycles $c$ on $Q$ we now get the following result.

\begin{corollary}\label{cor:pent}
We have a homomorphism $H^2(Q;\T)\to H^2_\pent(v;\T)$ defined by $[c]\mapsto[\theta_c]$,
\begin{equation}\label{eq:theta-c}
\theta_c(q_1,\xi_1;q_2,\xi_2):= \overline {c(\eta^{-1}({q_2^{-1}}^\flat\xi_2)^{-1}\eta^{-1}(\xi_1),\eta^{-1}(\xi_1)^{-1})}\,
c(\eta^{-1}(\xi_2)^{-1}\eta^{-1}(q_2^\flat\xi_1),\eta^{-1}(q_2^\flat\xi_1)^{-1}).
\end{equation}
\end{corollary}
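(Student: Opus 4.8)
The plan is to leverage the observation, built into Theorem~\ref{thm:mult}, that the pentagonal transformation $v$ and the Jacobian $J_v$ in \eqref{eq:mult} depend only on $\eta$ and the group law of $Q\ltimes\hat V$, and \emph{not} on the auxiliary cochain $b$; changing $b$ affects only the pentagonal $2$-cocycle $\Theta$. I would first record the trivial but useful fact that the assignment $c\mapsto\theta_c$ is multiplicative already at the level of cochains: both factors in \eqref{eq:theta-c} are multiplicative in $c$, so $\theta_{c_1c_2}=\theta_{c_1}\theta_{c_2}$. Hence, once the map is shown to descend to cohomology, the homomorphism property is automatic.

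Next I would check that $\theta_c$ is a pentagonal $2$-cocycle for $v$ for every $c\in Z^2(Q;\T)$. Since $\partial(bc)=\partial b\cdot\partial c=\psi$, the product $bc$ is an admissible choice of cochain in the setting of Section~\ref{sec:rep}, so Theorem~\ref{thm:mult} applies verbatim with $b$ replaced by $bc$ and yields a pentagonal $2$-cocycle $\Theta_{bc}$ for the \emph{same} transformation $v$. Inspecting \eqref{eq:Theta}, the $\psi$-factors are untouched while the two $b$-factors each acquire a factor of $c$, so that $\Theta_{bc}=\Theta\,\theta_c$ with $\theta_c$ exactly as in \eqref{eq:theta-c}. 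The pentagonal $2$-cocycles for a fixed $v$ form a group under pointwise multiplication --- the pentagon relation for $\theta U$ becomes, after cancelling the $U$-operators by means of the pentagon relation for $U$, a functional equation that is multiplicative in $\theta$ --- so the quotient $\theta_c=\Theta_{bc}\,\overline\Theta$ is again a pentagonal $2$-cocycle.

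The remaining, and in my view the only substantive, point is that a coboundary $c=\partial f$ produces a pentagonal coboundary $\theta_c$, which is what makes $[c]\mapsto[\theta_c]$ well defined on $H^2(Q;\T)$. Here I would invoke the corollary following Theorem~\ref{OmegaF} (resting on \cite{BGNT3}*{Corollary~2.8}): since $[c]=[0]$ in $H^2(Q;\T)$, the dual $2$-cocycles $\Omega_c$ and $\Omega$ are equivalent. An equivalence $\Omega_c=(u\otimes u)\Omega\,\hat\Delta(u)^*$ by a unitary $u\in W^*(G)$ implements an isomorphism $\hat G_{\Omega_c}\cong\hat G_\Omega$ via $\Ad u$, which conjugates the multiplicative unitaries $\hat W_{\Omega_c}=\Theta\theta_c\,J_v^{1/2}(\,\cdot\circ v)$ and $\hat W_\Omega=\Theta\,J_v^{1/2}(\,\cdot\circ v)$. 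Because both share the same $v$ and $J_v$, this intertwiner must be the operator of multiplication by a $\T$-valued function $F$ on $Q\times\hat V$; conjugating $\hat W_\Omega$ by $F\otimes F$ multiplies its cocycle by $\frac{F\otimes F}{(F\otimes F)\circ v}$, and comparison with $\hat W_{\Omega_c}$ gives $\theta_c=\frac{F\otimes F}{(F\otimes F)\circ v}$, i.e. $\theta_c$ is the pentagonal coboundary of $F$. The delicate step is precisely the identification of the gauge unitary $u$ with a multiplication operator; concretely, $u$ arises from the inflated coboundary $h(q,v)=f(q)$ on $G$, and the natural candidate is $F(q,\xi)=f(q)$, so that an alternative --- but lengthier, appendix-style --- route is to verify $\theta_{\partial f}=\frac{F\otimes F}{(F\otimes F)\circ v}$ for this $F$ directly from \eqref{eq:theta-c}.
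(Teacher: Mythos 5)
Your first two paragraphs reproduce what the paper in fact does: since $\partial(bc)=\partial b=\psi$, Theorem~\ref{thm:mult} applies verbatim with $b$ replaced by $bc$, the transformation $v$ and the Jacobian $J_v$ are untouched, and the two $b$-factors in \eqref{eq:Theta} pick up exactly the two $c$-factors of \eqref{eq:theta-c}; hence $\Theta_{bc}=\Theta\,\theta_c$, $\theta_c$ is a pentagonal $2$-cocycle for $v$ because pentagonal $2$-cocycles for a fixed transformation form a group, and multiplicativity in $c$ is immediate from the formula. That part is correct and is essentially the paper's argument.

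The gap is in your third paragraph, i.e.\ exactly at the point you yourself flag as delicate: showing that $c=\partial f$ yields a pentagonal coboundary. The soft argument does not close. The gauge unitary $u$ implementing $\Omega_{\partial f}\sim\Omega$ lies in $W^*(G)=\lambda(G)''$, and after conjugation by $\CF_V$ such elements are not multiplication operators in $(q,\xi)$ (compare \eqref{eq:FL-invariance}); the mere fact that $\hat W_{\Omega_c}$ and $\hat W_\Omega$ are conjugate and share the same $v$ and $J_v$ does not force the intertwiner to be of the form $F\otimes F$ with $F$ a $\T$-valued function, and without that the two pentagonal cocycles need not be cohomologous. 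Worse, your fallback candidate $F(q,\xi)=f(q)$ is demonstrably wrong: its pentagonal coboundary contains the factor $f(q_1)\,\overline{f(q_2q_1)}$ and therefore depends on $q_1$, whereas $\theta_{\partial f}$, as one sees from \eqref{eq:theta-c}, does not depend on $q_1$ at all. A direct verification can be carried out, but with
$$
F(q,\xi)=f\big(\eta^{-1}({q^{-1}}^\flat\xi)^{-1}\big)\,\overline{f\big(\eta^{-1}(\xi)^{-1}\big)};
$$
using the identity $q^{-1}\eta^{-1}(\xi+\eta(q))=\eta^{-1}({q^{-1}}^\flat\xi)$ one checks that $\theta_{\partial f}$ is precisely the pentagonal coboundary of this $F$. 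Alternatively --- and this is in effect how the paper disposes of the issue --- well-definedness follows from the identification, carried out immediately after the corollary, of $[c]\mapsto[\theta_c]$ with the map $H^2(Q_\eta;\T)\to H^2_\pent(Q\bowtie Q_\eta;\T)$ in the Kac exact sequence, which is well defined on cohomology by the results of~\cite{MR2115374}.
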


Since $H^2(v;\T)\cong H^2(Q\bowtie Q_\eta;\T)$, we thus get a homomorphism $H^2(Q;\T)\to H^2(Q\bowtie Q_\eta;\T)$.
We next want to compare this homomorphism with homomorphisms in the Kac exact sequence~\citelist{\cite{Kac}\cite{MR2115374}}.

For a matched pair $(L_1,L_2)$ of subgroups of $L$, the part of the Kac sequence that is important for us has the form
$$
H^2(L;\T)\to H^2(L_1;\T)\oplus H^2(L_2;\T)\to H^2_\pent(L_1\bowtie L_2;\T)\to H^3(L;\T).
$$
Here the homomorphisms $H^2(L;\T)\to H^2(L_i;\T)$ are simply the restriction maps. In order to describe the homomorphisms $H^2(L_i;\T)\to H^2(L_1\bowtie L_2;\T)$, let us denote by $\alpha$ and $\beta$ the left actions of $L_1$ on $L_2$ and of $L_2$ on $L_1$, respectively, defined by the identity
$$
gh^{-1}=\alpha_{g}(h)^{-1}\beta_h(g)\quad\text{for}\quad g\in L_1,\ h\in L_2,
$$
or in other words, by the identities $\alpha_{g}(h)=p_2(hg^{-1})$, $\beta_h(g)=p_1(hg^{-1})^{-1}$.
The following is checked by a straightforward but tedious application of definitions and isomorphisms in~\cite{MR2115374}.

\begin{lemma}
The maps $H^2(L_i;\T)\to H^2_\pent(L_1\bowtie L_2;\T)$, $[c]\mapsto[\kappa_c]$, in the Kac exact sequence are given by
\begin{align*}
\kappa_c(x,y)&=c(x_1,y_1)\,\overline{c(\beta_{x_2^{-1}}(x_1),\beta_{x_2^{-1}}(y_1^{-1})^{-1})} &\text{for}\quad c\in Z^2(L_1;\T),\\
\kappa_c(x,y)&=c(x_2,x_2^{-1}\alpha_{y_1}(y_2^{-1})^{-1})\,\overline{c(\alpha_{y_1^{-1}}(x_2^{-1})^{-1},\alpha_{y_1^{-1}}(x_2^{-1})y_2)}&\text{for}\quad c\in Z^2(L_2;\T),
\end{align*}
where $x_i:=p_i(x)$, $y_i:=p_i(y)$ for $x,y\in L$.
\end{lemma}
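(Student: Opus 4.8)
The plan is to read both maps off directly from the definition of the Kac cohomology of the matched pair $(L_1,L_2)$ and from its identification with pentagonal $2$-cocycles, as set up in~\cite{MR2115374}, and then to simplify the resulting cochain-level expressions using the factorization $L\cong L_1\times L_2$ together with the two actions $\alpha,\beta$. Concretely, I would first import from~\cite{MR2115374} the bicomplex of measurable $\T$-valued functions on powers of $L_1$ and $L_2$ whose (suitably normalized) cohomology is $H^2_\pent(L_1\bowtie L_2;\T)$, the reconstruction isomorphism that produces an honest pentagonal $2$-cocycle $\theta$ on $L\times L$ (for the transformation $w$ of the matched pair) out of such a Kac cocycle by distributing its components over the factorizations $x=x_1x_2$, $y=y_1y_2$, and the chain-level description of the edge homomorphisms of the exact sequence. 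Since the source is a direct sum, it suffices to treat a class coming from $L_1$ and a class coming from $L_2$ separately, which is why the lemma records two formulas.

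Next I would make the $L_1$-map explicit. A cocycle $c\in Z^2(L_1;\T)$ is inflated into the pure-$L_1$ slot of the Kac bicomplex; since $c$ need not be invariant under the $L_2$-action $\beta$, it is not by itself a total cocycle, so one adjoins the canonical correction terms dictated by the vertical differential, landing in the kernel of the total differential. Feeding this through the reconstruction isomorphism and substituting $x=x_1x_2$, $y=y_1y_2$ then splits $\kappa_c(x,y)$ into the two displayed factors: $c(x_1,y_1)$ is the value of $c$ on the $L_1$-parts, and $\overline{c(\beta_{x_2^{-1}}(x_1),\beta_{x_2^{-1}}(y_1^{-1})^{-1})}$ is the $\beta$-twisted correction, whose arguments $\beta_{x_2^{-1}}(x_1)$ and $\beta_{x_2^{-1}}(y_1^{-1})^{-1}$ are exactly the $L_1$-components one obtains after pushing $x_2^{-1}$ past the $L_1$-factors along $w$ via $\beta_h(g)=p_1(hg^{-1})^{-1}$. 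The $L_2$-map is handled symmetrically, inflating $c\in Z^2(L_2;\T)$ into the pure-$L_2$ slot and using the action $\alpha_g(h)=p_2(hg^{-1})$ in place of $\beta$; here the refactorized arguments $x_2^{-1}\alpha_{y_1}(y_2^{-1})^{-1}$ and $\alpha_{y_1^{-1}}(x_2^{-1})y_2$ record how $w$ transports the $L_2$-components through the $L_1$-components, reproducing the stated $\kappa_c$.

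Finally I would confirm that both assignments descend to cohomology and land in $Z^2_\pent$: one checks that $\kappa_{\partial f}$ is a pentagonal coboundary for every $1$-cochain $f$, and that each $\kappa_c$ satisfies the pentagon $2$-cocycle identity for $w$. Both are formal consequences of the group $2$-cocycle identity for $c$ combined with the matched-pair relation $\alpha_g(h)^{-1}\beta_h(g)=gh^{-1}$ and the associativity of the factorization, which encode how $\alpha$ and $\beta$ interact. I expect the main obstacle to be purely organizational rather than conceptual: one must fix once and for all the conventions of~\cite{MR2115374} (the direction of each action, which of the two factorizations enters each slot, and the placement of inverses) and carry them consistently through the reconstruction isomorphism $\theta\leftrightarrow(\text{Kac cocycle})$. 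It is precisely at this translation step, where the matched-pair data $w$, $\alpha$ and $\beta$ are woven in, that the many inverses in the final formulas are generated and where a misplaced convention would be easy to introduce and hard to detect; this is what makes the verification, though routine, genuinely tedious.
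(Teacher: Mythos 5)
Your proposal takes essentially the same route as the paper, which offers no written proof beyond the remark that the lemma ``is checked by a straightforward but tedious application of definitions and isomorphisms in~\cite{MR2115374}'': you correctly identify the two edge maps of the Kac bicomplex, the reconstruction isomorphism to pentagonal cocycles, and the role of the actions $\alpha_g(h)=p_2(hg^{-1})$, $\beta_h(g)=p_1(hg^{-1})^{-1}$. Like the paper, however, you stop at the level of a plan and never actually fix the conventions of~\cite{MR2115374} and push a cocycle through them, so the specific placement of inverses in the displayed formulas for $\kappa_c$ is described but not derived.
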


We apply this to $L=Q\ltimes\hat V$, $L_1=Q$ and $L_2=Q_\eta$. To simplify the formulas we will view $p_i\colon L\to L_i$ as maps $Q\ltimes\hat V\to Q$. With this convention, for almost all $x\in Q\ltimes\hat V$ we have
$$
x=(x_1,0)(x_2,\eta(x_2))=(x_1x_2,x_1^\flat\eta(x_2)).
$$
Formula~\eqref{eq:T-inverse} then becomes
$$
T^{-1}(x)=(x_1^{-1},\eta(x_2)).
$$
Therefore for the cocycle~\eqref{eq:theta-c} we get
$$
\theta_c(T^{-1}(x),T^{-1}(y))=
\overline{c(\eta^{-1}(y_1^\flat\eta(y_2))^{-1}x_2,x_2^{-1})}\,
c(y_2^{-1}\eta^{-1}({y_1^{-1}}^\flat\eta(x_2)),\eta^{-1}({y_1^{-1}}^\flat\eta(x_2))^{-1}).
$$

Next, it is easy to check that the actions $\alpha,\beta$ of $Q$ on itself defined by the matched pair $(Q,Q_\eta)$ are given by
$$
\alpha_q(q')=\eta^{-1}(q^\flat\eta({q'}^{-1}))^{-1},\qquad\beta_q=\alpha_q.
$$
We thus get
$$
\theta_c(T^{-1}(x),T^{-1}(y))=
\overline{c(\alpha_{y_1}(y_2^{-1})x_2,x_2^{-1})}\,
c(y_2^{-1}\alpha_{y_1^{-1}}(x_2^{-1})^{-1},\alpha_{y_1^{-1}}(x_2^{-1})).
$$
From this we see that
$$
\theta_c\circ(T^{-1}\times T^{-1})=\kappa_{\tilde c},
$$
where $\tilde c\in Z^2(Q_\eta;\T)\cong Z^2(Q;\T)$ is the cocycle given by
$$
\tilde c(q,q'):=\overline{c({q'}^{-1},q^{-1})}.
$$
The cocycles $\tilde c$ and $c$ are cohomologous, they even coincide when $c$ satisfies the normalization condition $c(q,q^{-1})=1$ for all $q$.

To summarize, we have shown that up to the isomorphisms $H^2(Q;\T)\cong H^2(Q_\eta;\T)$ and $H^2_\pent(v;\T)\cong H^2_\pent(Q\bowtie Q_\eta;\T)$, the homomorphism $H^2(Q;\T)\to H^2_\pent(v;\T)$ defined by Corollary~\ref{cor:pent} coincides with the homomorphism $H^2(Q_\eta;\T)\to H^2_\pent(Q\bowtie Q_\eta;\T)$ in the Kac exact sequence
$$
H^2(Q\ltimes\hat V;\T)\to H^2(Q;\T)\oplus H^2(Q_\eta;\T)\to H^2_\pent(Q\bowtie Q_\eta;\T)\to H^3(Q\ltimes\hat V;\T).
$$

As an application we can now strengthen \cite{GM}*{Proposition~3.21}, proving a conjecture made in~\cite{GM}.

\begin{proposition}\label{prop:inj}
If our fixed essentially bijective cocycle $\eta\in Z^1_c(Q;\hat V)$ is a coboundary, then the homomorphism $H^2(Q;\T)\to H^2_\pent(v;\T)$ defined by Corollary~\ref{cor:pent} is injective.
\end{proposition}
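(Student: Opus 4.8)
The plan is to leverage the identification, established in the paragraphs preceding this proposition, of the homomorphism $H^2(Q;\T)\to H^2_\pent(v;\T)$ of Corollary~\ref{cor:pent} with the Kac-sequence homomorphism $H^2(Q_\eta;\T)\to H^2_\pent(Q\bowtie Q_\eta;\T)$, $[\tilde c]\mapsto[\kappa_{\tilde c}]$. Since transport by $T^{-1}\times T^{-1}$ induces an isomorphism $H^2_\pent(v;\T)\cong H^2_\pent(Q\bowtie Q_\eta;\T)$ and $\tilde c$ is cohomologous to $c$, injectivity of $[c]\mapsto[\theta_c]$ is equivalent to injectivity of the second-factor map appearing in the Kac exact sequence
\[
H^2(Q\ltimes\hat V;\T)\xrightarrow{\ \rho\ } H^2(Q;\T)\oplus H^2(Q_\eta;\T)\xrightarrow{\ \sigma\ } H^2_\pent(Q\bowtie Q_\eta;\T).
\]
So I would reduce the entire statement to showing that this component of $\sigma$ has trivial kernel.

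The key tool is exactness of the Kac sequence at the middle term, giving $\ker\sigma=\operatorname{im}\rho$, where $\rho(\gamma)=(\mathrm{res}_Q\gamma,\mathrm{res}_{Q_\eta}\gamma)$ is the pair of restriction maps (up to the usual sign conventions, which do not affect kernels). Thus a class $[\tilde c]\in H^2(Q_\eta;\T)$ is sent to $0$ in $H^2_\pent$ precisely when $(0,[\tilde c])$ lies in the image of $\rho$, that is, when there exists $\gamma\in H^2(Q\ltimes\hat V;\T)$ with $\mathrm{res}_Q\gamma=0$ and $\mathrm{res}_{Q_\eta}\gamma=[\tilde c]$.

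The coboundary hypothesis enters through conjugacy. As noted before Lemma~\ref{lem:matched}, $\eta=\partial\xi_0$ forces $Q$ and $Q_\eta$ to be conjugate in $L:=Q\ltimes\hat V$; concretely, with $g=(e,\xi_0)$ one checks $gQg^{-1}=Q_\eta$ and that the conjugation isomorphism $c_g\colon Q\to Q_\eta$, $q\mapsto(q,\eta(q))$, is exactly the standard isomorphism used to identify $H^2(Q;\T)$ with $H^2(Q_\eta;\T)$. Since inner automorphisms act trivially on the cohomology of $L$, the relation $\iota_{Q_\eta}\circ c_g=\Ad_g\circ\iota_Q$ yields $c_g^*(\mathrm{res}_{Q_\eta}\gamma)=\mathrm{res}_Q\gamma$ for every $\gamma$; in other words, under the identification $H^2(Q_\eta;\T)\cong H^2(Q;\T)$ the two restriction maps coincide. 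Feeding this into the previous paragraph: if $\mathrm{res}_Q\gamma=0$ then $\mathrm{res}_{Q_\eta}\gamma=0$ as well, forcing $[\tilde c]=0$. Hence the second-factor map is injective, and therefore so is the homomorphism of Corollary~\ref{cor:pent}.

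The step I expect to require the most care is the triviality of inner automorphisms on Moore cohomology with $\T$-coefficients and its compatibility with the measurable restriction maps entering the Kac sequence of~\cite{MR2115374}: this is standard for ordinary group cohomology, and I would verify that the Borel-cochain model behaves identically. A secondary point to pin down is that $g=(e,\xi_0)$ conjugates $Q$ onto $Q_\eta$ with $c_g$ equal to $q\mapsto(q,\eta(q))$ on the nose, so that no extra automorphism of $Q$ is concealed in the identification; this is a one-line computation in $Q\ltimes\hat V$. Everything else follows formally from exactness.
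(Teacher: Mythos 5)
Your proposal is correct and follows essentially the same route as the paper: the paper's proof likewise invokes exactness of the Kac sequence at the middle term to reduce injectivity to the statement that a $2$-cocycle on $Q\ltimes\hat V$ whose restriction to $Q$ is a coboundary also restricts to a coboundary on $Q_\eta$, and then concludes by the conjugacy of $Q$ and $Q_\eta$ when $\eta=\partial\xi_0$. The only quibble is a sign in your explicit conjugating element (with the paper's conventions it is $(e,-\xi_0)$ rather than $(e,\xi_0)$ that carries $Q$ onto $Q_{\partial\xi_0}$), which does not affect the argument.
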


Therefore if in the setting of Theorem~\ref{thm:mult} we denote by $\Theta_c$ the pentagonal cocycle defined by~\eqref{eq:Theta}, with $b$ replaced by $bc$, then assuming that $\eta$ is a coboundary, we conclude that the map $H^2(Q;\T)\to H^2_\pent(v;\T)$, $[c]\mapsto[\Theta_c]$, is injective.

\bp[Proof of Proposition~\ref{prop:inj}]
The Kac exact sequence implies that injectivity of the homomorphism $H^2(Q;\T)\to H^2_\pent(v;\T)$ is equivalent to the following property: if the restriction of a $2$-cocycle on $Q\ltimes\hat V$ to $Q$ is a coboundary, then its restriction to~$Q_\eta$ is a coboundary as well. But this property is obviously true when $\eta$ is a coboundary, since then~$Q$ and~$Q_\eta$ are conjugate subgroups of $Q\ltimes\hat V$.
\ep

When $\eta$ is not a coboundary, the homomorphism $H^2(Q;\T)\to H^2_\pent(v;\T)$ is not injective in general. Equivalently, there might be nontrivial cocycles on $Q_\eta$ that can be extended to cocycles on $Q\ltimes\hat V$ such that their restrictions to $Q\subset Q\ltimes\hat V$ are coboundaries. For example, given $\gamma\in Z^1(Q;V)$ we can consider the cup-product
$$
(\gamma\cup\eta)(q,q')=e^{i\langle\gamma(q),q^\flat\eta(q')\rangle},
$$
the required extension of $\gamma\cup\eta$ from $Q_\eta$ to $Q\ltimes\hat V$ is given by
$$
\nu_\gamma((q_1,\xi_1),(q_2,\xi_2)):=e^{i\langle\gamma(q_1),q_1^\flat\xi_2\rangle}.
$$

The last cocycle has the extra property that not only is its restriction to $Q$ trivial, but also its restriction to $\hat V$ is trivial. More generally, one can show that if we have a $2$-cocycle $\nu$ on $Q\ltimes\hat V$ such that its restriction $\Res^{Q\ltimes\hat V}_Q\nu$ to $Q$ is a coboundary, then the cohomology class of $\Res^{Q\ltimes\hat V}_{\hat V}\nu$ is $Q$-invariant and lies in the kernel of a natural homomorphism $\tau\colon H^2(\hat V;\T)^Q\to H^2(Q;V)$ (see \citelist{\cite{MR1810480}*{Section~1.2}\cite{MR2844801}*{Section~3}} for the definition of $\tau$), and this way we get a short exact sequence
\begin{multline*}
0\to H^1(Q;V)\xrightarrow{[\gamma]\mapsto[\nu_\gamma]}\ker(\Res^{Q\ltimes\hat V}_Q\colon H^2(Q\ltimes\hat V;\T)\to  H^2(Q;\T))\\
\xrightarrow{\Res^{Q\ltimes\hat V}_{\hat V}}\ker(\tau\colon H^2(\hat V;\T)^Q\to H^2(Q;V))\to0.
\end{multline*}
The kernel of $H^2(Q;\T)\to H^2_\pent(v;\T)$ is isomorphic to the image of
$$\ker(\Res^{Q\ltimes\hat V}_Q\colon H^2(Q\ltimes\hat V;\T)\to H^2(Q;\T))$$ under the restriction map $\Res^{Q\ltimes\hat V}_{Q_\eta}\colon H^2(Q\ltimes\hat V;\T)\to H^2(Q_\eta;\T)$.

\bigskip

\appendix

\section{Proof of Theorem~\ref{OmegaF}}\label{ap:A}

To compute the dual cocycle $\Omega$ it is convenient to introduce the following function space.

\begin{definition}[cf.~\cite{BGNT3}*{Definition~3.15}]
Denote by $\CF\CL_0(G)$ the space of functions $f\in L^2(G)$ such that $\CF_V f$ is (essentially) bounded on~$Q\times\hat V$ and zero outside the set
$$
X_{L,M}:=\{(q,\xi):q\in L,\ \eta(q)-\xi\in \eta(M)\}
$$
for some compact subsets $L,M\subset Q$.
\end{definition}

This space is dense in $L^2(G)$, since the union of the sets $X_{L,M}$ is a subset of $Q\times\hat V$ of full measure by essential bijectivity of $\eta$.

From the cocycle relation for $\beta\in Z^2(Q;V)$ and our normalization~\eqref{eq:normalization3},
we have
\begin{equation}\label{eq:FL-invariance}
(\CF_V\lambda_{g} f)(q',\xi')=|q| e^{-i\langle \xi',v+\beta(q,q^{-1}q')\rangle}(\CF_Vf)(q^{-1}q',{q^{-1}}^\flat\xi').
\end{equation}
As $q^\flat\eta(M)+\eta(q)=\eta(qM)$ and
$\eta(q')-\xi'=q^\flat(\eta(q^{-1}q')-{q^{-1}}^\flat\xi')+\eta(q)$,
we see that if $\CF_V f$ is zero outside $X_{L,M}$, then $\CF_V\lambda_{g} f$ is zero outside $X_{qL,qM}$. It follows that the space $\CF\CL_0(G)$ is invariant under left translations by the elements of $G$.

\begin{lemma}
\label{Gomega}
For every $f\in L^2(Q\times\hat V\times Q\times\hat V,|q|^{-1}dq\times d\xi\times |q|^{-1}dq\times d\xi)$, we have:

\medskip
$\displaystyle ((\CF_V\otimes \CF_V)\tilde\G(\CF_V^*\otimes \CF_V^*)f)(q_1,\xi_1;q_2,\xi_2)$
\begin{align*}
=\ &
|q_1|\Delta(q_1)^{-1/2}\Delta(\eta^{-1}(\xi_1+\eta(q_2)))^{1/2}\,e^{i\langle\xi_1,\beta(q_1,q_1^{-1}q_2)\rangle} \\
& b (\eta^{-1}(\eta(q_2)-\xi_2)^{-1},q_2)\,
\overline{b (\eta^{-1}(\xi_1+\eta(q_2))^{-1},q_2)}\\
& b (\eta^{-1}(\xi_1+\eta(q_2))^{-1},\eta^{-1}(\xi_1+\eta(q_2)))\,\overline{b(\eta^{-1}(\eta(q_2)-\xi_2)^{-1},\eta^{-1}(\xi_1+\eta(q_2)))}
\\
& f(q_1^{-1}q_2,-{q_1^{-1}}^\flat\xi_1;
\eta^{-1}(\xi_1+\eta(q_2)), \xi_1+\xi_2).
\end{align*}
\end{lemma}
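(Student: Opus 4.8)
The plan is to unravel $\tilde\G$ as the Galois map of Theorem~\ref{thm:key} conjugated on each leg by the quantization map of Proposition~\ref{UEQ}, that is, $\tilde\G=(\id\otimes\Op^{-1})\circ\G\circ(\Op\otimes\Op)$, and then to compute $(\CF_V\otimes\CF_V)\tilde\G(\CF_V^*\otimes\CF_V^*)$ by inserting the explicit formulas for $\G$ and $\Op$. The first point I would exploit is that conjugating by $\CF_V$ cancels the partial Fourier transforms already built into~\eqref{Komega}: the operators $\Op\,\CF_V^*$ (on each input factor) and $\CF_V\,\Op^{-1}$ (on the output factor) reduce to a change of variables in the $\hat V$-coordinate followed by multiplication by $|\cdot|^{\pm1/2}$ and $b(\cdot,\cdot)^{\pm1}$, with no integration left. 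Similarly, after the substitution made in the proof of Theorem~\ref{thm:key} the map $\G$ is, in the $G$-variable $v$, an inverse Fourier transform, so applying $\CF_V$ there collapses its defining $\xi$-integral to evaluation at $\xi=\xi_1$. After these cancellations the operator is a single change-of-variables-and-multiplication operator, whose components I would read off.

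Carrying out the composition, the two arguments of $f$ come from the kernel substitutions (with $q_0=q_1^{-1}q_2$ in the first slot and $q_0=\eta^{-1}(\xi_1+\eta(q_2))$ in the second): the cocycle identity for $\eta$ turns the first into $(q_1^{-1}q_2,-{q_1^{-1}}^\flat\xi_1)$ and the second into $(\eta^{-1}(\xi_1+\eta(q_2)),\xi_1+\xi_2)$, matching the statement. The modulus--modular prefactor should then assemble from four sources: the $|\cdot|^{-1/2}$ in each kernel, the Jacobian $|\eta^{-1}(\xi_1+\eta(q_2))|$ of the $\xi$-substitution from Lemma~\ref{lem:normalization}, the Jacobian $|q_2|^{1/2}|\eta^{-1}(\eta(q_2)-\xi_2)|^{1/2}$ produced by inverting $K$, and---the subtle point---a factor $\Delta(\cdot)^{1/2}$ from the mismatch between the plain Hilbert--Schmidt norm for which $\Op$ is unitary and the GNS map $\tilde\Lambda$ of $\tilde\varphi$ used to define $\G$: under $\HS(L^2(Q))\cong L^2(\N,\tilde\varphi)$, $\tilde\Lambda(T)\mapsto TD^{1/2}$, the kernel attached to $\Op(f)$ acquires an extra $\Delta^{1/2}$ in its second variable, and its reciprocal appears on the output. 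I expect all $|\cdot|$-powers but $|q_1|$ to cancel and the $\Delta$-powers to collapse to $\Delta(q_1)^{-1/2}\Delta(\eta^{-1}(\xi_1+\eta(q_2)))^{1/2}$.

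The step I expect to be the main obstacle is the bookkeeping of the $b$-factors and the emergence of $e^{i\langle\xi_1,\beta(q_1,q_1^{-1}q_2)\rangle}$, which has no counterpart in $\G$ or $\Op$ individually. Three of the four $b$-factors in the statement should come straight from the diagonal and off-diagonal $b$-terms of the kernels; the leftover product---$b(q_2^{-1},q_1)\,\overline{b(\eta^{-1}(\xi_1+\eta(q_2))^{-1},q_1)}$ from $\G$, the pair $b(q_2^{-1}q_1,q_1^{-1}q_2)\,\overline{b(\eta^{-1}(\xi_1+\eta(q_2))^{-1}q_1,q_1^{-1}q_2)}$ from the first kernel, and $\overline{b(q_2^{-1},q_2)}$ from inverting $K$---I would simplify with the coboundary relation $\psi=\partial b$ applied to the two triples $(q_2^{-1},q_1,q_1^{-1}q_2)$ and $(\eta^{-1}(\xi_1+\eta(q_2))^{-1},q_1,q_1^{-1}q_2)$. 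This should leave $\overline{b(\eta^{-1}(\xi_1+\eta(q_2))^{-1},q_2)}$ times the ratio $\psi(\eta^{-1}(\xi_1+\eta(q_2))^{-1},q_1,q_1^{-1}q_2)/\psi(q_2^{-1},q_1,q_1^{-1}q_2)$, and since $\psi(x,y,z)=e^{i\langle\eta(x^{-1}),\beta(y,z)\rangle}$ by~\eqref{eq:psi} while $\eta(\eta^{-1}(\xi_1+\eta(q_2)))-\eta(q_2)=\xi_1$, this ratio is precisely $e^{i\langle\xi_1,\beta(q_1,q_1^{-1}q_2)\rangle}$. The genuine difficulty is keeping the orientation of each change of variables and the active Haar normalization straight through every factor, which is exactly why the full verification belongs in the appendix.
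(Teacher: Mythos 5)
Your proposal is correct and follows essentially the same route as the paper's proof: realize $\tilde\G$ through kernels via $\Op$, compose the kernels with the $\Delta^{1/2}$-correction coming from the GNS map $T\mapsto TD^{1/2}$, invert the kernel formula~\eqref{Komega} in the second leg, and substitute $\eta(q_0)=\xi_1+\eta(q_2)$ using Lemma~\ref{lem:normalization}; your accounting of the modulus and modular factors is the same as the paper's. The one point where you genuinely diverge is the treatment of the $g_1$-dependence. The paper first rewrites $\pi(g_1)\Op(f_1)D^{-1/2}\pi(g_1)^*$ as $\Delta(g_1)^{-1/2}\Op(\lambda_{g_1}f_1)D^{-1/2}$ and reads the phase $e^{i\langle\xi_1,\beta(q_1,q_1^{-1}q_2)\rangle}$ directly off the translation formula~\eqref{eq:FL-invariance}, i.e.\ off the group law of $G$; you instead keep the $\Ad\pi(g_1)$-twisted kernel from the proof of Theorem~\ref{thm:key} and recover that phase from the $b$-factors. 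I checked your simplification: applying $\psi=\partial b$ to the two triples $(q_2^{-1},q_1,q_1^{-1}q_2)$ and $(\eta^{-1}(\xi_1+\eta(q_2))^{-1},q_1,q_1^{-1}q_2)$ does reduce your five leftover $b$-terms to $\overline{b(\eta^{-1}(\xi_1+\eta(q_2))^{-1},q_2)}$ times the stated ratio of $\psi$'s, which equals $e^{i\langle\xi_1,\beta(q_1,q_1^{-1}q_2)\rangle}$ by~\eqref{eq:psi}. So the two mechanisms agree, as they must by the intertwining property of $\Op$, and your variant is a legitimate (if not obviously shorter) alternative for this one step.
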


\begin{proof}
If $f\in \CF\CL_0(G)$ and $\CF_Vf$ is zero outside $X_{L,M}$, then the function $K(f)$ given by~\eqref{Komega} is zero outside $L\times M$. It follows that if $D$ is the Duflo--Moore operator from Proposition~\ref{prop:representation}, then $\Op(f)D^{-1/2}$ is a kernel operator with kernel $K(f)(1\otimes
\Delta^{1/2}|_Q)$, which is a bounded function vanishing outside $L\times M$. In particular, $\Op(f)D^{-1/2}$ is  Hilbert--Schmidt and thus bounded.

Take $f_1,f_2\in \CF\CL_0(G)$. Then for the Galois map $\tilde \G$ we have
\begin{align*}
(\tilde \G(f_1\otimes f_2))(g_1,g_2)&=\Op^*\big(\pi (g_1)\Op(f_1)D^{-1/2}\pi (g_1)^*
\Op(f_2)\big)(g_2)\\
&=\Delta(g_1)^{-1/2}\Op^*\big(\Op(\lambda_{g_1}f_1)D^{-1/2}
\Op(f_2)\big)(g_2).
\end{align*}
The operator $\Op(\lambda_{g_1}f_1)D^{-1/2}\Op(f_2)$ has kernel $ k_{g_1}\in L^2(Q\times Q)$,
given by
\begin{align*}
k_{g_1}(q_2,q_3)
&= \int K(\lambda_{g_1}f_1)(q_2,q_0)\,\Delta(q_0)^{1/2} \,
 K(f_2)(q_0,q_3)\,dq_0\\
 &= \int b (q_2^{-1},q_2)
\,\overline{b(q_0^{-1},q_2)}\,b (q_0^{-1},q_0)
\,\overline{b(q_3^{-1},q_0)}\,\Delta(q_0)^{1/2}
 \\
& \qquad(\CF_V \lambda_{g_1}f_1)(q_2, \eta(q_2)-\eta(q_0))\,(\CF_V f_2)(q_0, \eta(q_0)-\eta(q_3))
 \,|q_2q_3|^{-1/2}\,|q_0|^{-1}\,dq_0.
 \end{align*}

We need to write this as $K(f)$ for some $f$. For this we have to invert the kernel formula~\eqref{Komega}. Letting
$$
k(q_0,q):=|q_0q|^{-1/2}\,b (q_0^{-1},q_0)
\,\overline{b (q^{-1},q_0)}\,(\CF_V f)(q_0, \eta(q_0)-\eta(q))
$$
and $\xi= \eta(q_0)-\eta(q)$, so that $q=\eta^{-1}(\eta(q_0)-\xi)$, we get
\begin{multline*}
(\CF_V f)(q_0, \xi)\\=|q_0\eta^{-1}(\eta(q_0)-\xi)|^{-1/2}\,\overline{b (q_0^{-1},q_0)}
\,{b }(\eta^{-1}(\eta(q_0)-\xi)^{-1},q_0)\,k(q_0,\eta^{-1}(\eta(q_0)-\xi)).
\end{multline*}

We therefore deduce:

\medskip
$\displaystyle \big((1\otimes \CF_V)\tilde \G(f_1\otimes f_2)\big)(q_1,v_1;q_2,\xi_2)$
\begin{align*}
&=\Delta(q_1)^{-1/2}\,
|q_2\eta^{-1}(\eta(q_2)-\xi_2)|^{1/2}\,\overline{b(q_2^{-1},q_ 2)}\,b (\eta^{-1}(\eta(q_2)-\xi_2)^{-1},q_2)\\
&\qquad
k_{g_1}(q_2,\eta^{-1}(\eta(q_2)-\xi_2))\\
&=\Delta(q_1)^{-1/2}
\, b (\eta^{-1}(\eta(q_2)-\xi_2)^{-1},q_2)\int \overline{b(q_0^{-1},q_2)}\,b (q_0^{-1},q_0)
\,\overline{b(\eta^{-1}(\eta(q_2)-\xi_2)^{-1},q_0)} \\
& \qquad(\CF_V \lambda_{g_1}f_1)(q_2, \eta(q_2)-\eta(q_0))\,(\CF_V f_2)(q_0, \eta(q_0)-\eta(q_2)+\xi_2)
 \,\Delta(q_0)^{1/2}\,|q_0|^{-1}\,dq_0.
\end{align*}
Applying~\eqref{eq:FL-invariance} we then get

\medskip
$\displaystyle \big((1\otimes \CF_V)\tilde \G(f_1\otimes f_2)\big)(q_1,v_1;q_2,\xi_2)$
\begin{align}\label{eq:star-product0}
=\ &|q_1|\,\Delta(q_1)^{-1/2}
\, b (\eta^{-1}(\eta(q_2)-\xi_2)^{-1},q_2)\notag\\
&\int \overline{b (q_0^{-1},q_2)}\,b (q_0^{-1},q_0)
\,\overline{b (\eta^{-1}(\eta(q_2)-\xi_2)^{-1},q_0)}\notag \\
&(\CF_Vf_1)(q_1^{-1}q_2,{q_1^{-1}}^\flat(\eta(q_2)-\eta(q_0)))\,(\CF_V f_2)(q_0, \eta(q_0)-\eta(q_2)+\xi_2)\notag\\
&e^{-i\langle \eta(q_2)-\eta(q_0),v_1+\beta(q_1,q_1^{-1}q_2)\rangle}\,\Delta(q_0)^{1/2}\,\frac{dq_0}{\,|q_0|}.
\end{align}
Setting $\xi_1:=\eta(q_0)$, we get

\medskip
$\displaystyle \big((1\otimes \CF_V)\tilde \G(f_1\otimes f_2)\big)(q_1,v_1;q_2,\xi_2)$
\begin{align*}
=\ &|q_1|\,\Delta(q_1)^{-1/2}
\, b (\eta^{-1}(\eta(q_2)-\xi_2)^{-1},q_2)\\
&\int \overline{b (\eta^{-1}(\xi_1)^{-1},q_2)}\,b (\eta^{-1}(\xi_1)^{-1},\eta^{-1}(\xi_1))
\,\overline{b (\eta^{-1}(\eta(q_2)-\xi_2)^{-1},\eta^{-1}(\xi_1))}\\
& (\CF_Vf_1)(q_1^{-1}q_2,{q_1^{-1}}^\flat(\eta(q_2)-\xi_1))\,(\CF_V f_2)(\eta^{-1}(\xi_1),\xi_1-\eta(q_2)+\xi_2)\\
& e^{-i\langle \eta(q_2)-\xi_1,v_1+\beta(q_1,q_1^{-1}q_2)\rangle}\,\Delta(\eta^{-1}(\xi_1))^{1/2}\,d\xi_1.
\end{align*}
Performing the translation $\xi_1\mapsto \xi_1+\eta(q_2)$ and using ${q^{-1}}^\flat\eta^{-1}(\xi+\eta(q))=\eta^{-1}( {q^{-1}}^\flat\xi)$, we get

\medskip
$\displaystyle \big((1\otimes \CF_V)\tilde \G(f_1\otimes f_2)\big)(q_1,v_1;q_2,\xi_2)$
\begin{align*}
=\ &|q_1|\,\Delta(q_1)^{-1/2}
\, b (\eta^{-1}(\eta(q_2)-\xi_2)^{-1},q_2)\\
&\int \overline{b (\eta^{-1}(\xi_1+\eta(q_2))^{-1},q_2)}\,b (\eta^{-1}(\xi_1+\eta(q_2))^{-1},\eta^{-1}(\xi_1+\eta(q_2)))\\
&\overline{b (\eta^{-1}(\eta(q_2)-\xi_2)^{-1},\eta^{-1}(\xi_1+\eta(q_2)))}\\
& (\CF_Vf_1)(q_1^{-1}q_2,-{q_1^{-1}}^\flat\xi_1)\,(\CF_V f_2)(\eta^{-1}(\xi_1+\eta(q_2)),\xi_1+\xi_2)\\
& e^{i\langle\xi_1,v_1+\beta(q_1,q_1^{-1}q_2)\rangle}\,\Delta(\eta^{-1}(\xi_1+\eta(q_2)))^{1/2}\,d\xi_1.
\end{align*}

Applying $\CF_V\otimes1$ we conclude that the lemma is true for the functions $f=\CF_Vf_1\otimes \CF_Vf_2$, with $f_1,f_2\in \CF\CL_0(G)$, which is enough by density of the linear span of such functions.
\end{proof}

\begin{lemma}
\label{LDF}
We have almost everywhere:
\begin{multline}\label{eq:LDF}
 \overline{b (\eta^{-1}(\xi_2)^{-1}q,q^{-1}\eta^{-1}(\xi_1)^{-1})}\,b (\eta^{-1}(\xi_2)^{-1}q,q^{-1})\, \overline{b (q,q^{-1})}\,b (q,q^{-1}\eta^{-1}(\xi_1)^{-1})\\
 e^{i\langle {q^{-1}}^\flat\xi_2,\beta(q^{-1},q)-\beta(q^{-1}\eta^{-1}(\xi_1)^{-1},\eta^{-1}(\xi_1)q)\rangle}
 =\overline {b(\eta^{-1}(\xi_2)^{-1},\eta^{-1}(\xi_1)^{-1})}\\ \overline{\psi(\eta^{-1}(\xi_2)^{-1},\eta^{-1}(\xi_1)^{-1},\eta^{-1}(\xi_1)q)}.
 \end{multline}
\end{lemma}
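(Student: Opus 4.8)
The plan is to reduce the claimed identity to the $3$-cocycle relation $\partial\psi=1$, after converting \emph{both} the exponential factor and the four values of $b$ into values of $\psi$. Throughout I use the normalizations \eqref{eq:normalization2}--\eqref{eq:normalization3}, so that $\psi$ is normalized and $b(\cdot,e)=b(e,\cdot)=1$, together with the defining relation $\psi=\partial b$ written in the form
$$
b(r,s)\,b(rs,t)=\overline{\psi(r,s,t)}\,b(s,t)\,b(r,st).
$$
It is convenient to abbreviate $a:=\eta^{-1}(\xi_2)^{-1}$ and $c:=\eta^{-1}(\xi_1)^{-1}$, so that $\xi_2=\eta(a^{-1})$ and all arguments on the left involve only $a,c,q$. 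Observe that $q^{-1}c$ and $c^{-1}q$ are mutually inverse, as are $q^{-1}$ and $q$, so both $\beta$-terms in the exponent have the shape $\beta(x,x^{-1})$.

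First I would rewrite the exponential factor. Using the cocycle identity for $\eta$ one has ${q^{-1}}^\flat\xi_2={q^{-1}}^\flat\eta(a^{-1})=\eta((aq)^{-1})-\eta(q^{-1})$, and then \eqref{eq:psi} identifies each elementary factor $e^{\pm i\langle\eta(s^{-1}),\beta(\cdot,\cdot)\rangle}$ with $\psi(s,\cdot,\cdot)^{\pm1}$. This turns the exponential into the product
$$
\psi(aq,q^{-1},q)\,\overline{\psi(q,q^{-1},q)}\;\overline{\psi(aq,q^{-1}c,c^{-1}q)}\,\psi(q,q^{-1}c,c^{-1}q).
$$
Next I would collapse the four $b$-values. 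Applying the displayed coboundary relation to $b(aq,q^{-1})$ and to $b(aq,q^{-1}c)$, in both cases splitting $aq=a\cdot q$ and using $b(\cdot,e)=1$, and then forming the ratio $b(aq,q^{-1})b(q,q^{-1}c)/\bigl(b(aq,q^{-1}c)b(q,q^{-1})\bigr)$, the factors $b(q,q^{-1})$, $b(q,q^{-1}c)$ and $b(a,q)$ all cancel and the combination of the four $b$-values reduces to
$$
\overline{b(a,c)}\;\psi(a,q,q^{-1}c)\,\overline{\psi(a,q,q^{-1})}.
$$

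Combining these two steps, the left-hand side becomes $\overline{b(a,c)}$ times a product of six values of $\psi$, and since the right-hand side is $\overline{b(a,c)}\,\overline{\psi(a,c,c^{-1}q)}$, the statement reduces to a pure identity among values of $\psi$. This I would verify from $\partial\psi=1$ applied twice: the instance at $(a,q,q^{-1},q)$ gives $\psi(aq,q^{-1},q)=\psi(q,q^{-1},q)\,\psi(a,q,q^{-1})$, which cancels the three $\psi$-factors supported on $q,q^{-1}$; the instance at $(a,q,q^{-1}c,c^{-1}q)$ then turns the remaining four factors into $\overline{\psi(a,c,c^{-1}q)}$, using $\psi(\cdot,\cdot,e)=1$ in both instances.

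The conceptual content is entirely in Step~1 (recognizing the $\beta$-phases as values of $\psi$) and in selecting the two correct instances of the $3$-cocycle relation in the final step; everything else is routine. The main obstacle is therefore not a difficulty of idea but the bookkeeping: keeping track of the conjugations and of which group elements are inverse to which, so that the $b$-values cancel exactly. Since they do cancel, no hypothesis beyond the normalizations and $\psi=\partial b\in Z^3(Q;\T)$ enters the argument.
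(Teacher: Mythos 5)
Your proof is correct, and it takes a genuinely different route from the paper's. With $a=\eta^{-1}(\xi_2)^{-1}$ and $c=\eta^{-1}(\xi_1)^{-1}$, your three steps all check out: the $\eta$-cocycle identity gives ${q^{-1}}^\flat\xi_2=\eta((aq)^{-1})-\eta(q^{-1})$, so each $\beta$-phase becomes a ratio $\psi(aq,\cdot,\cdot)\,\overline{\psi(q,\cdot,\cdot)}$; the relation $\psi=\partial b$ applied at $(a,q,q^{-1})$ and $(a,q,q^{-1}c)$ collapses the four $b$-values to $\overline{b(a,c)}\,\psi(a,q,q^{-1}c)\,\overline{\psi(a,q,q^{-1})}$; and the remaining six $\psi$-factors reduce to $\overline{\psi(a,c,c^{-1}q)}$ by $\partial\psi=1$ at $(a,q,q^{-1},q)$ and $(a,q,q^{-1}c,c^{-1}q)$ together with the normalization of $\psi$. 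The paper proceeds differently: it encodes the exponential as a ratio of just two $\psi$-values with first argument $\eta^{-1}({q^{-1}}^\flat\xi_2)^{-1}$, handles the $b$-factors through the auxiliary identity \eqref{ID}, and then --- once six $\psi$-factors remain --- unpacks them back into exponentials $e^{i\langle\xi_2,A\rangle}\,e^{i\langle\eta(q^{-1}),B\rangle}$ of pairings with $V$-valued expressions and finishes by showing $B=0$ and computing $A$ via repeated use of the $2$-cocycle identity for $\beta$. So the essential difference is the level at which the final cancellation happens: you work entirely with the $\T$-valued $3$-cocycle $\psi$ and exploit $\partial\psi=\partial^2b=1$, which is shorter and makes the mechanism more transparent, whereas the paper descends to the $V$-valued $2$-cocycle $\beta$, which costs more bookkeeping but defers all use of the $\eta$-cocycle identity to the very end. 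Both arguments use exactly the same hypotheses.
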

\begin{proof}
We first write the left hand side of~\eqref{eq:LDF} as follows:
\begin{multline*}
\frac{b (\eta^{-1}(\xi_2)^{-1}q, q^{-1}\eta^{-1}(\xi_2))}{b (\eta^{-1}(\xi_2)^{-1}q,q^{-1}\eta^{-1}(\xi_1)^{-1})}
\frac{b (\eta^{-1}(\xi_2)^{-1}q,q^{-1})}{b (\eta^{-1}(\xi_2)^{-1}q, q^{-1}\eta^{-1}(\xi_2))}
\frac{b (q,q^{-1}\eta^{-1}(\xi_1)^{-1})}{b(q,q^{-1})}\\
\psi(\eta^{-1}({q^{-1}}^\flat\xi_2)^{-1},q^{-1},q)\,
\overline  {\psi(\eta^{-1}({q^{-1}}^\flat\xi_2)^{-1},q^{-1}\eta^{-1}(\xi_1)^{-1},\eta^{-1}(\xi_1)q)}.
\end{multline*}
Using $\psi=\partial b$ and \eqref{eq:normalization3}, we have
\begin{align}
\label{ID}
\frac{b(q,q^{-1}q')}{b(q,q^{-1})}=\frac{\psi(q,q^{-1},q')}{b(q^{-1},q')}.
\end{align}
From this we see that the left hand side of~\eqref{eq:LDF} equals
\begin{multline*}
\frac{b(q^{-1}\eta^{-1}(\xi_2),\eta^{-1}(\xi_2)^{-1}\eta^{-1}(\xi_1)^{-1})}{\psi(\eta^{-1}(\xi_2)^{-1}q,q^{-1}\eta^{-1}(\xi_2),\eta^{-1}(\xi_2)^{-1}\eta^{-1}(\xi_1)^{-1})}
\frac{\psi(\eta^{-1}(\xi_2)^{-1}q,q^{-1}\eta^{-1}(\xi_2),\eta^{-1}(\xi_2)^{-1})}{ b(q^{-1}\eta^{-1}(\xi_2),\eta^{-1}(\xi_2)^{-1})}\\
\frac{\psi(q,q^{-1},\eta^{-1}(\xi_1)^{-1})}{ b (q^{-1},\eta^{-1}(\xi_1)^{-1})} \psi(\eta^{-1}({q^{-1}}^\flat\xi_2)^{-1},q^{-1},q)\,
\overline  {\psi(\eta^{-1}({q^{-1}}^\flat\xi_2)^{-1},q^{-1}\eta^{-1}(\xi_1)^{-1},\eta^{-1}(\xi_1)q)}.
\end{multline*}
Using again $\psi=\partial b$, we have the following identity:
\begin{multline*}
\frac{b(q^{-1}\eta^{-1}(\xi_2),\eta^{-1}(\xi_2)^{-1}\eta^{-1}(\xi_1)^{-1})}{ b(q^{-1}\eta^{-1}(\xi_2),\eta^{-1}(\xi_2)^{-1})\,b (q^{-1},\eta^{-1}(\xi_1)^{-1})}=\\
\overline {b(\eta^{-1}(\xi_2)^{-1},\eta^{-1}(\xi_1)^{-1})}\,\psi(q^{-1}\eta^{-1}(\xi_2),\eta^{-1}(\xi_2)^{-1},\eta^{-1}(\xi_1)^{-1}).
\end{multline*}
Applying this we arrive at the following expression for the left hand side of~\eqref{eq:LDF}:
\begin{multline*}
\overline {b(\eta^{-1}(\xi_2)^{-1},\eta^{-1}(\xi_1)^{-1})}\,
\overline {\psi(\eta^{-1}(\xi_2)^{-1}q,q^{-1}\eta^{-1}(\xi_2),\eta^{-1}(\xi_2)^{-1}\eta^{-1}(\xi_1)^{-1})}\\
\psi(\eta^{-1}(\xi_2)^{-1}q,q^{-1}\eta^{-1}(\xi_2),\eta^{-1}(\xi_2)^{-1})\,\psi(q,q^{-1},\eta^{-1}(\xi_1)^{-1})\\
\psi(q^{-1}\eta^{-1}(\xi_2),\eta^{-1}(\xi_2)^{-1},\eta^{-1}(\xi_1)^{-1})\,
\psi(\eta^{-1}({q^{-1}}^\flat\xi_2)^{-1},q^{-1},q)\\
\overline  {\psi(\eta^{-1}({q^{-1}}^\flat\xi_2)^{-1},q^{-1}\eta^{-1}(\xi_1)^{-1},\eta^{-1}(\xi_1)q)}.
\end{multline*}

To prove the lemma it remains to show that the product of the six factors involving $\psi$ in the above expression equals
$$
\overline{\psi(\eta^{-1}(\xi_2)^{-1},\eta^{-1}(\xi_1)^{-1},\eta^{-1}(\xi_1)q)}=e^{-\langle\xi_2,\beta(\eta^{-1}(\xi_1)^{-1},\eta^{-1}(\xi_1)q)\rangle}.
$$
By definition of $\psi$, the product of these six factors is equal to
\begin{multline*}
e^{i\langle\eta(q^{-1}\eta^{-1}(\xi_2)),\beta(q^{-1}\eta^{-1}(\xi_2),\eta^{-1}(\xi_2)^{-1})\rangle}\,
e^{i\langle\eta( \eta^{-1}(\xi_2)^{-1}q),\beta(\eta^{-1}(\xi_2)^{-1},\eta^{-1}(\xi_1)^{-1})\rangle}\\
e^{i\langle\eta(q^{-1}),\beta(q^{-1},\eta^{-1}(\xi_1)^{-1})\rangle}
e^{-i\langle\eta(q^{-1}\eta^{-1}(\xi_2)),\beta(q^{-1}\eta^{-1}(\xi_2),\eta^{-1}(\xi_2)^{-1}\eta^{-1}(\xi_1)^{-1})\rangle}\\
e^{i\langle\xi_2,q\beta(q^{-1},q)\rangle}\,
e^{-i\langle\xi_2,q\beta(q^{-1}\eta^{-1}(\xi_1)^{-1},\eta^{-1}(\xi_1)q)\rangle}.
\end{multline*}
Using the cocycle identity for $\eta$ and its consequence $\eta(q^{-1})=-{q^{-1}}^\flat\eta(q)$, we see that the expression above can be written as
$$
e^{i\langle\xi_2,A(q,\xi_1,\xi_2)\rangle}\,e^{i\langle\eta(q^{-1}),B(q,\xi_1,\xi_2)\rangle},
$$
where
\begin{multline*}
A(q,\xi_1,\xi_2)=q\big(\beta(q^{-1},q)-\beta(q^{-1}\eta^{-1}(\xi_1)^{-1},\eta^{-1}(\xi_1)q)
-\beta(q^{-1}\eta^{-1}(\xi_2),\eta^{-1}(\xi_2)^{-1}\eta^{-1}(\xi_1)^{-1})\\
+\beta(q^{-1}\eta^{-1}(\xi_2),\eta^{-1}(\xi_2)^{-1})- q^{-1}\eta^{-1}(\xi_2)\beta(\eta^{-1}(\xi_2)^{-1},\eta^{-1}(\xi_1)^{-1})\big),
\end{multline*}
and
\begin{multline*}
B(q,\xi_1,\xi_2)=\beta(q^{-1},\eta^{-1}(\xi_1)^{-1})-\beta(q^{-1}\eta^{-1}(\xi_2),\eta^{-1}(\xi_2)^{-1}\eta^{-1}(\xi_1)^{-1})\\
+
\beta(q^{-1}\eta^{-1}(\xi_2),\eta^{-1}(\xi_2)^{-1})-q^{-1}\eta^{-1}(\xi_2)\beta(\eta^{-1}(\xi_2)^{-1},\eta^{-1}(\xi_1)^{-1}).
\end{multline*}
The cocycle identity for $\beta$ at $(q^{-1}\eta^{-1}(\xi_2),\eta^{-1}(\xi_2)^{-1},\eta^{-1}(\xi_1)^{-1})$ gives $B=0$ and
$$
A(q,\xi_1,\xi_2)=q\big(\beta(q^{-1},q)-\beta(q^{-1}\eta^{-1}(\xi_1)^{-1},\eta^{-1}(\xi_1)q)-\beta(q^{-1},\eta^{-1}(\xi_1)^{-1})\big).
$$
Applying the cocycle identity for $\beta$ to the triple $(q^{-1},q,q^{-1}\eta^{-1}(\xi_1)^{-1})$ and then to
the triple $(q,q^{-1}\eta^{-1}(\xi_1)^{-1},\eta^{-1}(\xi_1)q)$, we get
\begin{align*}
A(q,\xi_1,\xi_2)&=\beta(q,q^{-1}\eta^{-1}(\xi_1)^{-1}) -q\beta(q^{-1}\eta^{-1}(\xi_1)^{-1},\eta^{-1}(\xi_1)q)\\
&=-\beta(\eta^{-1}(\xi_1)^{-1},\eta^{-1}(\xi_1)q),
\end{align*}
which completes the proof of the lemma.
\end{proof}

We are now ready to compute the dual cocycle $\Omega$.

\begin{proof}[Proof of Theorem~\ref{OmegaF}]
We shall first compute the adjoint
$\Omega^*=\hat W^*(1\otimes\CJ)\tilde\G(\CJ\otimes\CJ)$.
The relations
\begin{align*}
(\CF_V \CJ\CF_V^*\,f)(q,\xi)&=|q|\,\Delta(q)^{-1/2}\,e^{i\langle \xi,\beta(q,q^{-1})\rangle}\,f\big(q^{-1},-{q^{-1}}^\flat\xi\big),\\
\big((\CF_V\otimes \CF_V)\hat W^* (\CF_V^*\otimes \CF_V^*)f\big)(q_1,\xi_1;q_2,\xi_2)
&= |q_2|_V\, e^{-i\langle\xi_1,\beta(q_2,q_2^{-1}q_1)\rangle}\,f\big(q_2^{-1}q_1,{q_2^{-1}}^\flat\xi_1;q_2,\xi_1+\xi_2\big)
\end{align*}
and  Lemma \ref{Gomega} yield:

\medskip
$\displaystyle \big((\CF_V\otimes\CF_V)\Omega^* (\CF_V^*\otimes\CF_V^*)f\big)(q_1,\xi_1;q_2,\xi_2)$
\begin{align*}
&=\big((\CF_V\otimes \CF_V)\hat W^* (1\otimes\CJ)\tilde\G(\CJ\otimes\CJ)(\CF_V^*\otimes \CF_V^*)f\big)(q_1,\xi_1;q_2,\xi_2)\\
&=|q_2|\,e^{-i\langle\xi_1,\beta(q_2,q_2^{-1}q_1)\rangle}\,\big((\CF_V\otimes \CF_V) (1\otimes\CJ)\tilde\G(\CJ\otimes\CJ)(\CF_V^*\otimes \CF_V^*)
f\big)(q_2^{-1}q_1,{q_2^{-1}}^\flat\xi_1;q_2,\xi_1+\xi_2)\\
&=|q_2|^2\,\Delta(q_2)^{-1/2}\,e^{-i\langle\xi_1,\beta(q_2,q_2^{-1}q_1)\rangle}\,e^{i\langle \xi_1+\xi_2,\beta(q_2,q_2^{-1})\rangle}\\
&\qquad\big((\CF_V\otimes \CF_V) \tilde\G(\CJ\otimes\CJ)(\CF_V^*\otimes \CF_V^*)
f\big)(q_2^{-1}q_1,{q_2^{-1}}^\flat\xi_1;q_2^{-1},-{q_2^{-1}}^\flat\xi_1-{q_2^{-1}}^\flat\xi_2)\\
&=|q_1q_2|\,\Delta(q_1q_2)^{-1/2}\, \Delta(\eta^{-1}(\xi_1))^{-1/2}\,
 \\
&\qquad e^{-i\langle\xi_1,\beta(q_2,q_2^{-1}q_1)\rangle}\,e^{i\langle \xi_1+\xi_2,\beta(q_2,q_2^{-1})\rangle}
\,e^{i\langle{q_2^{-1}}^\flat\xi_1,\beta(q_2^{-1}q_1,q_1^{-1})\rangle}b (\eta^{-1}(\xi_1+\xi_2)^{-1}q_2,q_2^{-1})\\
&\qquad \overline{b(\eta^{-1}(\xi_1)^{-1}q_2,q_2^{-1})}\,
b (\eta^{-1}(\xi_1)^{-1}q_2,q_2^{-1}\eta^{-1}( \xi_1))
\,\overline{b(\eta^{-1}(\xi_1+\xi_2)^{-1}q_2,q_2^{-1}\eta^{-1}(\xi_1))}\\
&\qquad\big((\CF_V\otimes \CF_V) (\CJ\otimes\CJ)(\CF_V^*\otimes \CF_V^*)f\big)(q_1^{-1},-{q_1^{-1}}^\flat\xi_1;q_2^{-1}\eta^{-1}(\xi_1),-{q_2^{-1}}^\flat\xi_2)\\
&=|\eta^{-1}(\xi_1)|\, e^{-i\langle\xi_1,\beta(q_2,q_2^{-1}q_1)\rangle}\,
 e^{i\langle \xi_1+\xi_2,\beta(q_2,q_2^{-1})\rangle}\\
&\qquad e^{i\langle{q_2^{-1}}^\flat\xi_1,\beta(q_2^{-1}q_1,q_1^{-1})\rangle}
\, e^{-i\langle {q_1^{-1}}^\flat\xi_1,\beta(q_1^{-1},q_1)\rangle}\,e^{-i\langle {q_2^{-1}}^\flat\xi_2,\beta(q_2^{-1}\eta^{-1}(\xi_1),\eta^{-1}(\xi_1)^{-1}q_2)\rangle}
 \\
&\qquad b (\eta^{-1}(\xi_1+\xi_2)^{-1}q_2,q_2^{-1})
 \overline{b(\eta^{-1}(\xi_1)^{-1}q_2,q_2^{-1})}\,
b (\eta^{-1}(\xi_1)^{-1}q_2,q_2^{-1}\eta^{-1}( \xi_1))
\\
&\qquad \overline{b (\eta^{-1}(\xi_1+\xi_2)^{-1}q_2,q_2^{-1}\eta^{-1}(\xi_1))}\,
f(q_1,\xi_1;\eta^{-1}(\xi_1)^{-1}q_2,{\eta^{-1}(\xi_1)^{-1}}^\flat\xi_2).
\end{align*}
Using the cocycle relation
$$
q_2\beta(q_2^{-1}q_1,q_1^{-1})-\beta(q_1,q_1^{-1})+\beta(q_2,q_2^{-1})-\beta(q_2,q_2^{-1}q_1)=0,
$$
we get that the above expression equals
\begin{multline*}
|\eta^{-1}(\xi_1)|
  e^{i\langle \xi_2,\beta(q_2,q_2^{-1})\rangle}e^{-i\langle {q_2^{-1}}^\flat\xi_2,\beta(q_2^{-1}\eta^{-1}(\xi_1),\eta^{-1}(\xi_1)^{-1}q_2)\rangle}
 \\
b (\eta^{-1}(\xi_1+\xi_2)^{-1}q_2,q_2^{-1})
 \overline{b(\eta^{-1}(\xi_1)^{-1}q_2,q_2^{-1})}\,
b (\eta^{-1}(\xi_1)^{-1}q_2,q_2^{-1}\eta^{-1}( \xi_1))
\\
\overline{b (\eta^{-1}(\xi_1+\xi_2)^{-1}q_2,q_2^{-1}\eta^{-1}(\xi_1))}\,
f(q_1,\xi_1;\eta^{-1}(\xi_1)^{-1}q_2,{\eta^{-1}(\xi_1)^{-1}}^\flat\xi_2).
\end{multline*}
Hence for the inverse operator, using the  identities  $q\beta(q^{-1},q)=\beta(q,q^{-1})$ and $\eta(\xi_1)^{-1}\eta^{-1}(\xi_1+
 \eta(\xi_1)^\flat\xi_2)=\eta^{-1}(\xi_2)$, we get the following:

\medskip
$\displaystyle \big((\CF_V\otimes\CF_V)\Omega (\CF_V^*\otimes\CF_V^*)f\big)(q_1,\xi_1;q_2,\xi_2)$
\begin{align*}
=\ &|\eta^{-1}(\xi_1)|^{-1}\,e^{i\langle {q_2^{-1}}^\flat\xi_2,\beta(q_2^{-1},q_2)-\beta(q_2^{-1}\eta^{-1}(\xi_1)^{-1},\eta^{-1}(\xi_1)q_2)\rangle}\\
&  \overline{b (q_2,q_2^{-1})}\,b (q_2,q_2^{-1}\eta^{-1}(\xi_1)^{-1})\,\overline{b(\eta^{-1}(\xi_2)^{-1}q_2,q_2^{-1}\eta^{-1}(\xi_1)^{-1})}\,b (\eta^{-1}(\xi_2)^{-1}q_2,q_2^{-1})\\
& f(q_1, \xi_1 ;\eta^{-1}(\xi_1)q_2,\eta^{-1}(\xi_1)^\flat\xi_2),
\end{align*}
from which the theorem follows by Lemma \ref{LDF}.
\end{proof}

\bigskip

\section{Proof of Theorem~\ref{thm:mult}}\label{ap:B}

Once we have the dual unitary $2$-cocycle $\Omega$, we can define a new product~$\star_\Omega$ on the Fourier algebra $A(G)$ of $G$ by
$$
(f_1\star_\Omega f_2)(g):=(f_1\otimes f_2)((\lambda_g\otimes\lambda_g)\Omega^*)
$$
and a representation $\pi_\Omega$ of $(A(G),\star_\Omega)$ on $L^2(G)$ by
$$
\pi_\Omega(f):=(f\otimes\iota)(\hat W\Omega^*).
$$
The weak operator closure of $\pi_\Omega(A(G))$ in $B(L^2(G))$ is a von Neumann algebra that is denoted by $W^*(\hat G;\Omega)$.

By \cite{BGNT3}*{Proposition~2.9} and its proof we have an isomorphism
\begin{equation}\label{eq:iso-Galois}
\Ad (\Op\circ\CJ)\colon W^*(\hat G;\Omega)\to B(L^2(Q)),
\end{equation}
where we view $B(L^2(Q))$ as an algebra of operators on $\HS(L^2(Q))$ acting by multiplication on the left. This isomorphism is $G$-equivariant with respect to the action $\Ad\pi$ on $B(L^2(Q))$ and the action $\Ad\rho$ on $W^*(\hat G;\Omega)$, where $\rho\colon G\to U(L^2(G))$ is the right regular representation of~$G$. We need to understand this isomorphism in terms of the operators~$\pi_\Omega(f)$ and~$\Op(f)$.

Define a product $\star$ on $L^2(G)$ by the identity
$$
\Op(f_1\star f_2)=\Op(f_1)\Op(f_2).
$$
An explicit formula for this product has essentially been obtained in Lemma~\ref{Gomega}. Namely, for $f_1,f_2\in\CF\CL_0(G)$, we have
\begin{align*}
\big(\CF_V(f_1\star f_2)\big)(q,\xi)=\ & b (\eta^{-1}(\eta(q)-\xi)^{-1},q)\,
\int_Q \overline{b (q_0^{-1},q)}\,b (q_0^{-1},q_0)\,\overline{b (\eta^{-1}(\eta(q)-\xi)^{-1},q_0)}\notag \\
&(\CF_Vf_1)(q,\eta(q)-\eta(q_0))\,(\CF_V f_2)(q_0, \eta(q_0)-\eta(q)+\xi)\,\frac{dq_0}{\,|q_0|},
\end{align*}
which we get from~\eqref{eq:star-product0} by letting $q_1=e$, $v_1=0$, $q_2=q$, $\xi_2=\xi$ and omitting the factor $\Delta(q_0)^{1/2}$ there, as we are now computing $\Op(f_1)\Op(f_2)$ instead of $\Op(f_1)D^{-1/2}\Op(f_2)$.

For $z\in\C$, define a linear operator $T_z$ on $\CF\CL_0(G)$ by
$$
(T_zf)(q,v)=\int_{\hat V}e^{i\langle \xi,v\rangle}\,(\CF_V f)(q,\xi)\,\frac{\Delta(q)^z}{\Delta(\eta^{-1}(\eta(q)-\xi))^z}\,d\xi.
$$
The role of this operator is expressed in the identity
\begin{equation}\label{eq:Deltaz}
\Delta^z(f_1\star (\Delta^{-z}f_2))=(T_zf_1)\star f_2\quad\text{for all}\quad f_1,f_2\in \CF\CL_0(G).
\end{equation}

\begin{proposition}
For every $f\in A(G)\cap\CF\CL_0(G)$, the isomorphism \eqref{eq:iso-Galois} maps $\pi_\Omega(f)$ into $\Op(T_{-1/2}f)$.
\end{proposition}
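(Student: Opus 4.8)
The plan is to strip the two copies of $\CJ$ in $\Omega$ off against $\hat W$, reduce the assertion to a single identity between operators on $L^2(G)$, and then recognise the sliced Galois map as left $\star$-multiplication twisted by $T_{-1/2}$. First I would simplify $\hat W\Omega^*$: since $\CJ^2=1$ and $\CJ^*=\CJ$, formula~\eqref{DBTF} gives $\Omega^*=\hat W^*(1\otimes\CJ)\tilde\G(\CJ\otimes\CJ)$, whence $\hat W\Omega^*=(1\otimes\CJ)\tilde\G(\CJ\otimes\CJ)$. Because the slice $(f\otimes\iota)$ acts only on the first leg, a direct matrix-coefficient check gives $\CJ(f\otimes\iota)(X)\CJ=(f\otimes\iota)\big((1\otimes\CJ)X(1\otimes\CJ)\big)$; taking $X=\hat W\Omega^*$, the outer factors cancel the inner $1\otimes\CJ$ and $(\CJ\otimes\CJ)(1\otimes\CJ)=\CJ\otimes1$, so that
\[
\CJ\,\pi_\Omega(f)\,\CJ=(f\otimes\iota)\big(\tilde\G\,(\CJ\otimes1)\big).
\]

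Next I would reduce the target statement to this operator. Since $\Op$ is unitary and $\Op(f_1\star f_2)=\Op(f_1)\Op(f_2)$, left multiplication by $\Op(h)$ on $\HS(L^2(Q))$ is carried by $\Op^{-1}(\cdot)\Op$ to the operator $L^\star_{h}$ of left $\star$-multiplication by $h$ on $L^2(G)$, while $\Op^{-1}\,\Ad(\Op\circ\CJ)(\cdot)\,\Op=\Ad\CJ$. Hence the claim that \eqref{eq:iso-Galois} sends $\pi_\Omega(f)$ to $\Op(T_{-1/2}f)$, acting by left multiplication, is equivalent to
\[
(f\otimes\iota)\big(\tilde\G\,(\CJ\otimes1)\big)=L^\star_{T_{-1/2}f}.
\]

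The engine for the $T_{-1/2}$-twist is~\eqref{eq:Deltaz} together with the observation, immediate from the kernel formula~\eqref{Komega}, that $\Op$ intertwines pointwise multiplication by $\Delta^{s}$ on $G$ with left multiplication by $D^{-s}$ (as $\Delta|_G$ depends only on the $Q$-variable). These combine to the operator identity $\Op(f_1)D^{-1/2}\Op(f_2)=D^{-1/2}\Op\big((T_{-1/2}f_1)\star f_2\big)$. Feeding this into the expression $\tilde\G(f_1\otimes f_2)(g_1,\cdot)=\Delta(g_1)^{-1/2}\Op^{-1}\big(\Op(\lambda_{g_1}f_1)D^{-1/2}\Op(f_2)\big)$ from the proof of Lemma~\ref{Gomega}, and using that $\Op$ intertwines $\lambda$ with $\Ad\pi$ (Proposition~\ref{UEQ}), I would rewrite the inner operator as a $\star$-product carrying the twist $T_{-1/2}$. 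It then remains to perform the $(f\otimes\iota)$-slice in the first variable and reassemble the $g_1$-integral, via the duality pairing of $A(G)$ with $W^*(G)$, into the single operator $L^\star_{T_{-1/2}f}$.

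The hard part is precisely this last step: keeping track simultaneously of the modular factor $\Delta(g_1)^{-1/2}$, the Duflo--Moore operator $D$ coming from the GNS construction of $\tilde\varphi=\Tr(D^{1/2}\,\cdot\,D^{1/2})$, and the conjugations by $\CJ$, and checking that they conspire to yield exactly $T_{-1/2}$ applied to $f$, with no leftover powers of $\Delta$ and with a left rather than right multiplication. Since both $f\mapsto\CJ\pi_\Omega(f)\CJ$ and $f\mapsto L^\star_{T_{-1/2}f}$ are linear and weak-operator continuous, I would restrict to $f\in A(G)\cap\CF\CL_0(G)$ and to factorised test vectors, where the normalisation bookkeeping of Lemma~\ref{lem:normalization} and the $\CF\CL_0(G)$-calculus of Appendix~\ref{ap:A} turn the slice into a finite, if tedious, computation.
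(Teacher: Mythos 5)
Your proposal is correct and follows essentially the same route as the paper: the paper's one-line proof invokes exactly the two ingredients you isolate, namely the twist identity~\eqref{eq:Deltaz} and the relation between $\CJ\pi_\Omega(f)\CJ$ and the $\star$-product coming from $\hat W\Omega^*=(1\otimes\CJ)\tilde\G(\CJ\otimes\CJ)$ together with the expression for $\tilde\G$ via $\Op(\lambda_{g_1}f_1)D^{-1/2}\Op(f_2)$ (this is the content of the cited identity (3.20) of the earlier paper). The final slice does close up as you predict, since $T_{-1/2}$ commutes with $\lambda_{g_1}$ and $\Delta^{-1/2}$ pulls out of the first factor of~$\star$, yielding $(f\otimes\iota)(\tilde\G(\CJ\otimes1))h=\Delta^{-1/2}\big(f\star(\Delta^{1/2}h)\big)=(T_{-1/2}f)\star h$.
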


\bp
Similarly to \cite{BGNT3}*{Theorem~3.18}, this follows from identities \eqref{eq:Deltaz} and \cite{BGNT3}*{(3.20)}.
\ep

This proposition allows us to compare the GNS-maps for the canonical weights on $B(L^2(Q))$ and $W^*(\hat G;\Omega)$. Recall that on $B(L^2(Q))$ we consider the weight $\Tr(D^{1/2}\cdot D^{1/2})$, with the corresponding GNS-map $T\mapsto TD^{1/2}\in \HS(L^2(Q))$. On the other hand, we have a weight $\tilde\varphi$ on $W^*(\hat G;\Omega)$ defined by
$$
\tilde\varphi(T)1=\int_G (\Ad\rho(g))(T)\,dg\quad\text{for}\quad T\in W^*(\hat G;\Omega)_+.
$$
The canonical GNS-map $\tilde\Lambda\colon\mathfrak N_{\tilde\varphi}\to L^2(G)$ for this weight is determined by
\begin{equation}\label{eq:GNS-map}
\tilde\Lambda(\pi_\Omega(f))=\check f\quad\text{for}\quad f\in A(G)\quad\text{such that}\quad\check f\in L^2(G),
\end{equation}
where $\check f(g):=f(g^{-1})$. One can now easily check, cf.~\cite{BGNT3}*{Proposition 3.24}, that these two GNS-maps transfer to each other by the unitary $\Op\circ\CJ$, namely,
$$
\Op(\CJ\check f)=\Op(\Delta^{-1/2}f)=\Op(T_{-1/2}f)D^{1/2}
$$
for all $f\in\CF\CL_0(G)$. Since the modular conjugation for the weight $\Tr(D^{1/2}\cdot D^{1/2})$ is simply the map $T\mapsto T^*$ on $\HS(L^2(Q))$, we then get the following result that generalizes~\cite{BGNT3}*{Proposi\-tion~3.24}.

\begin{corollary}\label{cor:tildeJ}
The modular conjugation $\tilde J\colon L^2(G)\to L^2(G)$ for the canonical weight $\tilde\varphi$ on $W^*(\hat G;\Omega)$ and the GNS-map~\eqref{eq:GNS-map} is given by $\tilde J=\CJ \mathcal U \CJ J$, where $Jf:=\bar f$ and $\mathcal U$ is the unitary on $L^2(G)$ such that $\Op(f)^*=\Op(\mathcal U Jf)$ for $f\in L^2(G)$.
\end{corollary}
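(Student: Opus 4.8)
The plan is to obtain $\tilde J$ from the general principle that a unitary intertwining the GNS maps of two weights conjugates the modular conjugation of one onto the other. As recalled just before the statement, the unitary $U:=\Op\circ\CJ\colon L^2(G)\to\HS(L^2(Q))$ implements the isomorphism~\eqref{eq:iso-Galois}, and the identity $\Op(\CJ\check f)=\Op(T_{-1/2}f)D^{1/2}$ together with the preceding proposition shows that for $f\in A(G)\cap\CF\CL_0(G)$ one has $U\,\tilde\Lambda(\pi_\Omega(f))=\Op(T_{-1/2}f)D^{1/2}=\tilde\Lambda_B(\Op(T_{-1/2}f))$, where $\tilde\Lambda_B(T):=TD^{1/2}$ is the canonical GNS map of the weight $\Tr(D^{1/2}\,\cdot\,D^{1/2})$ on $\HS(L^2(Q))$. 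Thus $U$ carries the GNS map~\eqref{eq:GNS-map} for $\tilde\varphi$ to $\tilde\Lambda_B$ and intertwines the corresponding $*$-representations via~\eqref{eq:iso-Galois}. Consequently $U$ conjugates the Tomita operator of $\tilde\varphi$ into that of $\Tr(D^{1/2}\,\cdot\,D^{1/2})$ on a common core, and a standard polar-decomposition argument gives $U\,\tilde J\,U^*=J_B$, hence $\tilde J=U^*J_BU$, with $J_B$ the modular conjugation on $\HS(L^2(Q))$.

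It then remains to unwind the right-hand side, which is a short computation. First I would use that $\CJ$ is a \emph{linear} unitary involution, so $\CJ^*=\CJ$ and $U^*=\CJ\,\Op^*$. Next, as recalled in the text, $J_B$ is the adjoint map $T\mapsto T^*$ on $\HS(L^2(Q))$; combining this with the defining relation $\Op(f)^*=\Op(\mathcal U Jf)$ of $\mathcal U$ gives $J_B\,\Op=\Op\,\mathcal U J$, whence $\Op^*J_B\,\Op=\mathcal U J$. Substituting, $\tilde J=\CJ\,(\Op^*J_B\,\Op)\,\CJ=\CJ\,\mathcal U J\,\CJ$. Finally, a direct check from $(\CJ f)(g)=\Delta(g)^{-1/2}f(g^{-1})$ and $Jf=\bar f$ shows that $J$ and $\CJ$ commute, so $\CJ\,\mathcal U J\,\CJ=\CJ\,\mathcal U\,\CJ J$, which is exactly the asserted formula.

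The computation is routine once the framework is in place, so the only points demanding care are bookkeeping of antilinearity---making sure $J_B$, $J$ and $\CJ$ appear in the correct order and that the composite $\tilde J$ is genuinely antilinear---and the verification that the identities, which a priori hold only on the dense subspaces $A(G)\cap\CF\CL_0(G)$ and $\CF\CL_0(G)$, extend to a core for the Tomita operator so that uniqueness of the polar decomposition applies. I expect this density/core issue to be the only genuine subtlety, and it can be dealt with exactly as in the parallel argument of~\cite{BGNT3}*{Proposition~3.24}.
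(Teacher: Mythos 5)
Your proposal is correct and follows essentially the same route as the paper: the unitary $\Op\circ\CJ$ transfers the GNS-map~\eqref{eq:GNS-map} to the GNS-map $T\mapsto TD^{1/2}$ of $\Tr(D^{1/2}\,\cdot\,D^{1/2})$ while intertwining the representations, so it conjugates the Tomita operators, and the formula $\tilde J=\CJ\,\mathcal U\,\CJ J$ then falls out from $J_B(T)=T^*$, the defining relation of $\mathcal U$, and the commutation of $J$ with $\CJ$. The unwinding and the care with antilinearity and cores are exactly the details the paper leaves implicit (citing the parallel argument in \cite{BGNT3}*{Proposition~3.24}).
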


\begin{lemma}\label{lem:tildeJ}
The unitary $\mathcal U$ is given by
\begin{multline*}
(\CF_V \mathcal U f)(q,\xi)= \overline {b (\eta^{-1}(-{q^{-1}}^\flat\xi),\eta^{-1}(-{q^{-1}}^\flat\xi)^{-1})} \\
\overline{\psi(\eta^{-1}(\xi)^{-1},\eta^{-1}(\eta(q)-\xi),\eta^{-1}(-{q^{-1}}^\flat\xi)^{-1})} \,
(\CF_V f) \big(\eta^{-1}(\eta(q)-\xi),\xi\big).
\end{multline*}
\end{lemma}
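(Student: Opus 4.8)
The plan is to read $\mathcal{U}$ off from its defining property in Corollary~\ref{cor:tildeJ}. Rewriting that property as $\Op(\mathcal{U}f)=\Op(Jf)^*$ (take $h=Jf$ in $\Op(h)^*=\Op(\mathcal{U}Jh)$ and use $J^2=1$), everything reduces to an identity of Hilbert--Schmidt kernels: if $\Op(h)$ has kernel $K(h)$ as in~\eqref{Komega}, then $\Op(h)^*$ has kernel $(q_0,q)\mapsto\overline{K(h)(q,q_0)}$, so I must solve
\[
K(\mathcal{U}f)(q_0,q)=\overline{K(Jf)(q,q_0)}.
\]
The only analytic input beyond~\eqref{Komega} is the elementary relation $\overline{(\CF_Vf)(q,\mu)}=(\CF_VJf)(q,-\mu)$, which converts the conjugation on the right-hand side into a sign change of the $\hat V$-variable.

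Substituting~\eqref{Komega} into both sides, the factors $|q_0q|^{-1/2}$ cancel and, after writing $(\CF_VJf)(q,\eta(q)-\eta(q_0))=\overline{(\CF_Vf)(q,\eta(q_0)-\eta(q))}$, I am left with a scalar relation whose change of variable $\xi:=\eta(q_0)-\eta(q)$ (so that the internal index becomes $q'=\eta^{-1}(\eta(q)-\xi)$ after relabelling $q_0\mapsto q$) yields $(\CF_V\mathcal{U}f)(q,\xi)=P\,(\CF_Vf)(q',\xi)$ with the purely $b$-valued prefactor
\[
P=\frac{\overline{b((q')^{-1},q')}\,b(q^{-1},q')}{b(q^{-1},q)\,\overline{b((q')^{-1},q)}}.
\]
The structural observation driving the rest is that, by the cocycle identity for $\eta$ and $\eta(q^{-1})=-{q^{-1}}^\flat\eta(q)$, the element $r:=q^{-1}q'$ equals $\eta^{-1}(-{q^{-1}}^\flat\xi)$; this is exactly the group element occurring both inside the $b$-factor and as the last argument of $\psi$ in the claimed formula, and it satisfies $q'=qr$ and $q'r^{-1}=q$.

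It then remains to transform $P$ into $\overline{b(r,r^{-1})}\,\overline{\psi(\eta^{-1}(\xi)^{-1},q',r^{-1})}$, which I would do exactly in the style of Lemma~\ref{LDF}. Applying~\eqref{ID} (a reformulation of $\psi=\partial b$ under the normalization~\eqref{eq:normalization3}) to the two quotients of $P$ rewrites it as a product of $b(q,r)$, $b(q',r^{-1})$ and the two values $\psi(q^{-1},q,r)$ and $\psi((q')^{-1},q',r^{-1})$; recognising moreover that $\overline{\psi(q,r,r^{-1})}=b(q,r)\,b(q',r^{-1})\,\overline{b(r,r^{-1})}$ collapses the target identity to the purely $3$-cocycle statement
\[
\psi(q^{-1},q,r)\,\psi((q')^{-1},q',r^{-1})\,\psi(\eta^{-1}(\xi)^{-1},q',r^{-1})\,\psi(q,r,r^{-1})=1.
\]
To finish I would abandon the abstract $\partial b$ manipulations and pass to the explicit description $\psi(q_1,q_2,q_3)=e^{i\langle\eta(q_1^{-1}),\beta(q_2,q_3)\rangle}$, using $\eta(\eta^{-1}(\xi))=\xi=\eta(q)-\eta(q')$ to turn the four $\psi$-factors into $\beta$-pairings; the cocycle identity for $\beta$ applied to the triple $(q,r,r^{-1})$ together with $\eta(q^{-1})=-{q^{-1}}^\flat\eta(q)$ then makes the total exponent vanish. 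I expect this last bookkeeping --- choosing the right instances of~\eqref{ID} and of the $\beta$-cocycle identity so that everything telescopes --- to be the only real obstacle; the preceding kernel computation is entirely mechanical.
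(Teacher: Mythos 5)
Your proposal is correct and follows essentially the same route as the paper: both reduce to the intermediate formula $(\CF_V\mathcal Uf)(q,\xi)=P\,(\CF_Vf)(q',\xi)$ with the same four $b$-factors (the paper gets there via the factorization $\Op=\Op_0\circ\mathcal V$ rather than a direct adjoint-kernel computation, but this is cosmetic), and then simplify $P$ using identity~\eqref{ID}, the relation $\psi=\partial b$ at $(q,r,r^{-1})$, and the $\beta$-cocycle identity at that same triple. Your final reduction to $\psi(q^{-1},q,r)\,\psi((q')^{-1},q',r^{-1})\,\psi(\eta^{-1}(\xi)^{-1},q',r^{-1})\,\psi(q,r,r^{-1})=1$ does check out — all four exponents combine into $\langle\eta(q),\beta(q,r)+\beta(q',r^{-1})-q\beta(r,r^{-1})\rangle=0$ — and is exactly the computation the paper performs in a slightly different packaging.
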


\bp Denote by $\Op_0$ the map defined in the same way as $\Op$, but where we replace all $b$-factors by~$1$. In other words, if we define a unitary $\mathcal V$ on $L^2(G)$ by the identity
$$
(\CF_V\mathcal V f)(q,\xi)= b (q^{-1},q)
\,\overline{b (\eta^{-1}(\eta(q)-\xi)^{-1},q)}\,(\CF_{ V} f)(q,\xi),
$$
then $\Op(f)=\Op_0(\mathcal V f)$. Let $\mathcal U_0$ be the unitary operator defined by $\Op_0(f)^*=\Op_0(\mathcal U_0 Jf)$. Then
$\mathcal U =\mathcal V^*\mathcal U_0 J\mathcal VJ$.

A direct computation gives
$$
(\CF_V\mathcal U_0f)(q,\xi)=
(\CF_Vf)(\eta^{-1}(\eta(q)-\xi),\xi),
$$
cf.~\cite{BGNT3}*{Lemma 3.23}. Using that $(\CF_V Jf) (q,\xi)=\overline{\CF_Vf(q,-\xi)}$ we then get
\begin{align*}
(\CF_V\mathcal Uf)(q,\xi)=\ &\overline{ b(q^{-1},q)}
\,b(\eta^{-1}(\eta(q)-\xi)^{-1},q)\, \overline{ b (\eta^{-1}(\eta(q)-\xi)^{-1},\eta^{-1}(\eta(q)-\xi))}\\
& b(q^{-1},\eta^{-1}(\eta(q)-\xi))\, (\CF_Vf)(\eta^{-1}(\eta(q)-\xi),\xi).
\end{align*}

The final formula for $\CF_V\mathcal U$ follows from the following simplification of the $b$-factors: by identity~\eqref{ID}, we have
\begin{multline*}
\frac{b\big(q^{-1},\eta^{-1}(\eta(q)-\xi)\big)}{b(q^{-1},q)}
\frac{b\big(\eta^{-1}(\eta(q)-\xi)^{-1},q\big)}{b(\eta^{-1}(\eta(q)-\xi)^{-1},\eta^{-1}(\eta(q)-\xi))}\\
=
\frac{\psi(q^{-1},q,\eta^{-1}(-{q^{-1}}^\flat\xi))}{b(q,\eta^{-1}(-{q^{-1}}^\flat\xi))}
\frac{\psi(\eta^{-1}(\eta(q)-\xi)^{-1},\eta^{-1}(\eta(q)-\xi),\eta^{-1}(-{q^{-1}}^\flat\xi)^{-1})}{b(\eta^{-1}(\eta(q)-\xi),\eta^{-1}(-{q^{-1}}^\flat\xi)^{-1})}.
\end{multline*}
Using the coboundary relation $\psi=\partial b$ at $(q,\eta^{-1}(-{q^{-1}}^\flat\xi),\eta^{-1}(-{q^{-1}}^\flat\xi)^{-1})$, the last expression becomes
\begin{multline*}
\psi(q^{-1},q,\eta^{-1}(-{q^{-1}}^\flat\xi))\,\psi(\eta^{-1}(\eta(q)-\xi)^{-1},\eta^{-1}(\eta(q)-\xi),\eta^{-1}(-{q^{-1}}^\flat\xi)^{-1})\\
 \psi(q,\eta^{-1}(-{q^{-1}}^\flat\xi),\eta^{-1}(-{q^{-1}}^\flat\xi)^{-1})\,
\overline {b(\eta^{-1}(-{q^{-1}}^\flat\xi),\eta^{-1}(-{q^{-1}}^\flat\xi)^{-1})}
\end{multline*}
\begin{align*}
=\ &e^{i\langle \eta(q), \beta(q,\eta^{-1}(-{q^{-1}}^\flat\xi))+\beta(\eta^{-1}(\eta(q)-\xi),\eta^{-1}(-{q^{-1}}^\flat\xi)^{-1})
-q\beta(\eta^{-1}(-{q^{-1}}^\flat\xi),\eta^{-1}(-{q^{-1}}^\flat\xi)^{-1})\rangle}\\
&\qquad e^{-i\langle \xi,\beta(\eta^{-1}(\eta(q)-\xi),\eta^{-1}(-{q^{-1}}^\flat\xi)^{-1}) \rangle}\,
\overline {b(\eta^{-1}(-{q^{-1}}^\flat\xi),\eta^{-1}(-{q^{-1}}^\flat\xi)^{-1})}\\
=\ &\overline{\psi(\eta^{-1}( \xi)^{-1},\eta^{-1}(\eta(q)-\xi),\eta^{-1}(-{q^{-1}}^\flat\xi)^{-1})}\, \overline {b(\eta^{-1}(-{q^{-1}}^\flat\xi),\eta^{-1}(-{q^{-1}}^\flat\xi)^{-1})},
\end{align*}
where the last equality follows  from the cocycle identity for $\beta$  applied to the triple
$$
(q,\eta^{-1}(-{q^{-1}}^\flat\xi),\eta^{-1}(-{q^{-1}}^\flat\xi)^{-1}).
$$
\ep

\bp[Proof of Theorem~\ref{thm:mult}]
By \cite{DC}*{Proposition~5.4} and Corollary~\ref{cor:tildeJ}, we have  the formula
$$
\hat W_{\Omega}
=(\CJ \mathcal U \CJ J\otimes\hat J)\,\Omega\,\hat W^*\,(J\otimes\hat J)\,\Omega^*,
$$
where $(\hat J f)(g):=\Delta(g)^{-1/2}\overline{f(g^{-1})}$. The explicit expression for $(\CF_V\otimes\CF_V)\hat W_{\Omega} (\CF_V^*\otimes\CF_V^*)$ follows then by
a generalization of the computations given in the proof of \cite{BGNT3}*{Theorem 3.26}, one essentially just has to keep track of the three extra  phase factors coming from $\mathcal U$, $\Omega$ and $\Omega^*$.

In detail,  Theorem \ref{OmegaF} implies that for $f\in L^2(Q\times\hat V\times Q\times\hat V,|q|^{-1}dq\times d\xi\times |q|^{-1}dq\times d\xi)$ we have
\begin{multline*}
\big((\CF_V\otimes\CF_V)\Omega (\CF_V^*\otimes\CF_V^*)f\big)(q_1,\xi_1;q_2,\xi_2)=
 |\eta^{-1}(\xi_1)|^{-1}\,\overline {b(\eta^{-1}(\xi_2)^{-1},\eta^{-1}(\xi_1)^{-1})}\\
\overline{\psi(\eta^{-1}(\xi_2)^{-1},\eta^{-1}(\xi_1)^{-1},\eta^{-1}(\xi_1)q_2)}\,f(q_1, \xi_1 ;\eta^{-1}(\xi_1)q_2,\eta^{-1}(\xi_1)^\flat\xi_2),
\end{multline*}
hence also
\begin{multline*}
\big((\CF_V\otimes\CF_V)\Omega^* (\CF_V^*\otimes\CF_V^*)f\big)(q_1,\xi_1;q_2,\xi_2)=|\eta^{-1}(\xi_1)|\,
 b(\eta^{-1}(\xi_1+\xi_2)^{-1}\eta^{-1}(\xi_1),\eta^{-1}(\xi_1)^{-1})\\
\psi(\eta^{-1}(\xi_1+\xi_2)^{-1}\eta^{-1}(\xi_1),\eta^{-1}(\xi_1)^{-1},q_2)\, \,f\big(q_1, \xi_1 ;\eta^{-1}(\xi_1)^{-1}q_2,{\eta^{-1}(\xi_1)^{-1}}^\flat\xi_2\big).
\end{multline*}
 We also have
\begin{multline*}
\big((\CF_V\otimes \CF_V)\hat W^* (\CF_V^*\otimes \CF_V^*)f\big)(q_1,\xi_1;q_2,\xi_2)\\
= |q_2|\, e^{-i\langle\xi_1,\beta(q_2,q_2^{-1}q_1)\rangle}\,f\big(q_2^{-1}q_1,{q_2^{-1}}^\flat\xi_1;q_2,\xi_1+\xi_2\big)
\end{multline*}
and, for $f\in L^2(Q\times\hat V,|q|^{-1}dq\times d\xi)$,
\begin{align*}
(\CF_V \CJ\CF_V^*f)(q,\xi)=|q|\,\Delta(q)^{-1/2}\,e^{i\langle \xi,\beta(q,q^{-1})\rangle}\,f\big(q^{-1},-{q^{-1}}^\flat\xi\big).
\end{align*}
The last equality together with Lemma~\ref{lem:tildeJ} give us that

\medskip
$\displaystyle (\CF_V \CJ\CU\CJ\CF_V^*\,f)(q,\xi)$
\begin{align*}
&=|q|\,\Delta(q)^{-1/2}\,e^{i\langle \xi,\beta(q,q^{-1})\rangle}\,(\CF_V \CU\CJ\CF_V^*f)(q^{-1},-{q^{-1}}^\flat\xi)\\
&=|q|\,\Delta(q)^{-1/2}\,e^{i\langle \xi,\beta(q,q^{-1})+q\beta(q^{-1}\eta^{-1}(\xi),\eta^{-1}(\xi)^{-1})\rangle}\\
&\qquad \overline {b(\eta^{-1}(\xi),\eta^{-1}(\xi)^{-1})}\,(\CF_V \CJ\CF_V^*f)(q^{-1}\eta^{-1}(\xi),-{q^{-1}}^\flat\xi)\\
&=|\eta^{-1}(\xi)|\,\Delta(\eta^{-1}(\xi))^{-1/2}\,e^{i\langle \xi,\beta(q,q^{-1})+q\beta(q^{-1}\eta^{-1}(\xi),\eta^{-1}(\xi)^{-1})
-q\beta(q^{-1}\eta^{-1}(\xi),\eta^{-1}(\xi)^{-1}q)\rangle}\\
&\qquad \overline {b(\eta^{-1}(\xi),\eta^{-1}(\xi)^{-1})}\,f(\eta^{-1}(\xi)^{-1}q,{\eta^{-1}(\xi)^{-1}}^\flat\xi)\\
&=|\eta^{-1}(\xi)|\,\Delta(\eta^{-1}(\xi))^{-1/2}\,e^{i\langle \xi,\eta^{-1}(\xi)\beta(\eta^{-1}(\xi)^{-1},q)\rangle}\,\overline {b(\eta^{-1}(\xi),\eta^{-1}(\xi)^{-1})}\, f(\eta^{-1}(\xi)^{-1}q,{\eta^{-1}(\xi)^{-1}}^\flat\xi),
\end{align*}
where the last equality comes from the cocycle relation for $\beta$ at $(q^{-1}\eta^{-1}(\xi),\eta^{-1}(\xi)^{-1},q)$.
Therefore we get, using the identity $\eta^{-1}(-{\eta^{-1}(\xi)^{-1}}^\flat\xi)=\eta^{-1}(\xi)^{-1}$, that

\medskip
$\displaystyle \big((\CF_V\otimes\CF_V)\hat W_\Omega (\CF_V^*\otimes\CF_V^*)f\big)(q_1,\xi_1;q_2,\xi_2)$
\begin{align*}
&= \big((\CF_V\otimes\CF_V)(\CJ \mathcal U \CJ J\otimes\hat J)\,\Omega\,\hat W^*\,(J\otimes\hat J)\,\Omega^* (\CF_V^*\otimes\CF_V^*)f\big)(q_1,\xi_1;q_2,\xi_2)\\
&=|\eta^{-1}(\xi_1)|\,\Delta(\eta^{-1}(\xi_1))^{-1/2}\,e^{i\langle \xi_1 ,\eta^{-1}(\xi_1)\beta(\eta^{-1}(\xi_1)^{-1},q_1)\rangle}\,
 \overline {b(\eta^{-1}(\xi_1),\eta^{-1}(\xi_1)^{-1})}\\
&\qquad \big((\CF_V\otimes\CF_V)( J\otimes\hat J)\,\Omega\,\hat W^*\,(J\otimes\hat J)\,\Omega^*
  (\CF_V^*\otimes\CF_V^*)f\big)(\eta^{-1}(\xi_1)^{-1}q_1,{\eta^{-1}(\xi_1)^{-1}}^\flat\xi_1;q_2,\xi_2)\\
&=|\eta^{-1}(\xi_1)|\,\Delta(\eta^{-1}(\xi_1))^{-1/2}\,|q_2|\,\Delta(q_2)^{-1/2}\, e^{i\langle \xi_1 ,\eta^{-1}(\xi_1)\beta(\eta^{-1}(\xi_1)^{-1},q_1)\rangle}\,e^{i\langle \xi_2,\beta(q_2,q_2^{-1})\rangle}\\
&\qquad\overline {b(\eta^{-1}(\xi_1),\eta^{-1}(\xi_1)^{-1})}\\
&\qquad \overline{ \big((\CF_V\otimes\CF_V)\Omega\,\hat W^*\,(J\otimes\hat J)\,\Omega^*
  (\CF_V^*\otimes\CF_V^*)f\big)(\eta^{-1}(\xi_1)^{-1}q_1,-{\eta^{-1}(\xi_1)^{-1}}^\flat\xi_1;q_2^{-1},{q_2^{-1}}^\flat\xi_2)}\\
&=|\eta^{-1}(\xi_1)|^2\,\Delta(\eta^{-1}(\xi_1))^{-1/2}\,|q_2|\,\Delta(q_2)^{-1/2}\\
&\qquad  e^{i\langle \xi_1 ,\eta^{-1}(\xi_1)\beta(\eta^{-1}(\xi_1)^{-1},q_1)\rangle}\,e^{i\langle \xi_2,\beta(q_2,q_2^{-1})+q_2\beta(\eta^{-1}(\xi_1),\eta^{-1}(\xi_1)^{-1}q_2^{-1})\rangle}\\
&\qquad\overline{b(\eta^{-1}(\xi_1),\eta^{-1}(\xi_1)^{-1})}\, b(\eta^{-1}({q_2^{-1}}^\flat\xi_2)^{-1},\eta^{-1}(\xi_1))\\
&\qquad\overline{ \big((\CF_V\otimes\CF_V)\hat W^*\,(J\otimes\hat J)\,\Omega^*
  (\CF_V^*\otimes\CF_V^*)f\big)}\\
&\qquad\qquad\qquad\qquad\overline{\big(\eta^{-1}(\xi_1)^{-1}q_1, -{\eta^{-1}(\xi_1)^{-1}}^\flat\xi_1 ;\eta^{-1}(\xi_1)^{-1}q_2^{-1},{\eta^{-1}(\xi_1)^{-1}}^\flat{q_2^{-1}}^\flat\xi_2\big)}\\
&=|\eta^{-1}(\xi_1)|\,\Delta(\eta^{-1}(\xi_1))^{-1/2}\,\Delta(q_2)^{-1/2}\\
&\qquad
  e^{i\langle \xi_1 ,\eta^{-1}(\xi_1)(\beta(\eta^{-1}(\xi_1)^{-1},q_1)-\beta(\eta^{-1}(\xi_1)^{-1}q_2^{-1},q_2q_1))\rangle}\,
  e^{i\langle \xi_2,\beta(q_2,q_2^{-1})+q_2\beta(\eta^{-1}(\xi_1),\eta^{-1}(\xi_1)^{-1}q_2^{-1})\rangle}\\
&\qquad\overline {b(\eta^{-1}(\xi_1),\eta^{-1}(\xi_1)^{-1})}\, b(\eta^{-1}({q_2^{-1}}^\flat\xi_2)^{-1},\eta^{-1}(\xi_1))\\
&\qquad\overline{ \big((\CF_V\otimes\CF_V)(J\otimes\hat J)\,\Omega^*
  (\CF_V^*\otimes\CF_V^*)f\big)}\\
&\qquad\qquad\qquad\qquad\overline{\big(q_2q_1, -q_2^\flat\xi_1 ;\eta^{-1}(\xi_1)^{-1}q_2^{-1},{\eta^{-1}(\xi_1)^{-1}}^\flat({q_2^{-1}}^\flat\xi_2-\xi_1)\big)}\\
&=|q_2|^{-1}\,  e^{i\langle \xi_1 ,\eta^{-1}(\xi_1)(\beta(\eta^{-1}(\xi_1)^{-1},q_1)-\beta(\eta^{-1}(\xi_1)^{-1}q_2^{-1},q_2q_1)
  +\beta(\eta^{-1}(\xi_1)^{-1}q_2^{-1},q_2\eta^{-1}(\xi_1)))\rangle}\\
&\qquad  e^{i\langle \xi_2,\beta(q_2,q_2^{-1})+q_2\beta(\eta^{-1}(\xi_1),\eta^{-1}(\xi_1)^{-1}q_2^{-1})
-\beta(q_2\eta^{-1}(\xi_1),\eta^{-1}(\xi_1)^{-1}q_2^{-1})\rangle}\\
&\qquad \overline {b(\eta^{-1}(\xi_1),\eta^{-1}(\xi_1)^{-1})}\, b(\eta^{-1}({q_2^{-1}}^\flat\xi_2)^{-1},\eta^{-1}(\xi_1))\\
&\qquad \big((\CF_V\otimes\CF_V)\Omega^*(\CF_V^*\otimes\CF_V^*)f\big)
 \big(q_2q_1, q_2^\flat\xi_1 ;q_2\eta^{-1}(\xi_1),\xi_2-q_2^\flat\xi_1\big)\\
&=|\eta^{-1}(\xi_1+\eta(q_2^{-1}))|\\
&\qquad  e^{i\langle \xi_1 ,\eta^{-1}(\xi_1)(\beta(\eta^{-1}(\xi_1)^{-1},q_1)-\beta(\eta^{-1}(\xi_1)^{-1}q_2^{-1},q_2q_1)
  +\beta(\eta^{-1}(\xi_1)^{-1}q_2^{-1},q_2\eta^{-1}(\xi_1)))\rangle}\\
&\qquad e^{-i\langle \xi_1,q_2^{-1}\eta^{-1}(q_2^\flat\xi_1)\beta(\eta^{-1}(q_2^\flat\xi_1)^{-1},q_2\eta^{-1}(\xi_1)\rangle}\\
&\qquad  e^{i\langle \xi_2,\beta(q_2,q_2^{-1})+q_2\beta(\eta^{-1}(\xi_1),\eta^{-1}(\xi_1)^{-1}q_2^{-1})
-\beta(q_2\eta^{-1}(\xi_1),\eta^{-1}(\xi_1)^{-1}q_2^{-1})\rangle}\\
&\qquad  e^{i\langle \xi_2,\eta^{-1}(q_2^\flat\xi_1)\beta(\eta^{-1}(q_2^\flat\xi_1)^{-1},q_2\eta^{-1}(\xi_1)\rangle}\\
&\qquad \overline{b(\eta^{-1}(\xi_1),\eta^{-1}(\xi_1)^{-1})}\, b(\eta^{-1}({q_2^{-1}}^\flat\xi_2)^{-1},\eta^{-1}(\xi_1))\,b(\eta^{-1}(\xi_2)^{-1}\eta^{-1}(q_2^\flat\xi_1),\eta^{-1}(q_2^\flat\xi_1)^{-1})\\
& \qquad f \big(q_2q_1, q_2^\flat\xi_1 ;\eta^{-1}(\xi_1+\eta(q_2^{-1}))^{-1}\eta^{-1}(\xi_1),{\eta^{-1}(q_2^\flat\xi_1)^{-1}}^\flat(\xi_2-q_2^\flat\xi_1)\big).
  \end{align*}

The coboundary relation $\psi=\partial b$ at $(q',q,q^{-1})$ and $(q^{-1},q,q^{-1})$ gives:
$$
\frac{b(q'q,q^{-1})}{b(q,q^{-1})}=\overline{\psi(q',q,q^{-1})}\,\overline {b(q',q)}\qquad\mbox{and}\qquad b(q,q^{-1})=\psi(q^{-1},q,q^{-1})\, b(q^{-1},q).
$$
Therefore, for the $b$-factors in the expression above we have
\begin{multline*}
\overline {b(\eta^{-1}({q_2^{-1}}^\flat\xi_2)^{-1}\eta^{-1}(\xi_1),\eta^{-1}(\xi_1)^{-1})}\,
b(\eta^{-1}(\xi_2)^{-1}\eta^{-1}(q_2^\flat\xi_1),\eta^{-1}(q_2^\flat\xi_1)^{-1})
\\
\overline{\psi(\eta^{-1}({q_2^{-1}}^\flat\xi_2)^{-1}\eta^{-1}(\xi_1),\eta^{-1}(\xi_1)^{-1},\eta^{-1}(\xi_1))}\,
\psi(\eta^{-1}(\xi_1)^{-1},\eta^{-1}(\xi_1),\eta^{-1}(\xi_1)^{-1})\\
=\overline {b(\eta^{-1}({q_2^{-1}}^\flat\xi_2)^{-1}\eta^{-1}(\xi_1),\eta^{-1}(\xi_1)^{-1})}\,
b(\eta^{-1}(\xi_2)^{-1}\eta^{-1}(q_2^\flat\xi_1),\eta^{-1}(q_2^\flat\xi_1)^{-1})\\
e^{-i\langle\xi_2,q_2\beta(\eta^{-1}(\xi_1),\eta^{-1}(\xi_1)^{-1})\rangle}.
\end{multline*}

Hence we get:

\medskip
$\displaystyle \big((\CF_V\otimes\CF_V)\hat W_\Omega (\CF_V^*\otimes\CF_V^*)f\big)(q_1,\xi_1;q_2,\xi_2)$
\begin{align}\label{eq:W1}
&=|\eta^{-1}(\xi_1+\eta(q_2^{-1}))|\notag\\
&\qquad  e^{i\langle \xi_1 ,\eta^{-1}(\xi_1)(\beta(\eta^{-1}(\xi_1)^{-1},q_1)-\beta(\eta^{-1}(\xi_1)^{-1}q_2^{-1},q_2q_1)
  +\beta(\eta^{-1}(\xi_1)^{-1}q_2^{-1},q_2\eta^{-1}(\xi_1)))\rangle}\notag\\
&\qquad e^{-i\langle \xi_1 , q_2^{-1}\eta^{-1}(q_2^\flat\xi_1)\beta(\eta^{-1}(q_2^\flat\xi_1)^{-1},q_2\eta^{-1}(\xi_1))\rangle}\notag\\
&\qquad  e^{i\langle \xi_2,\beta(q_2,q_2^{-1})+q_2\beta(\eta^{-1}(\xi_1),\eta^{-1}(\xi_1)^{-1}q_2^{-1})
-\beta(q_2\eta^{-1}(\xi_1),\eta^{-1}(\xi_1)^{-1}q_2^{-1})-q_2\beta(\eta^{-1}(\xi_1),\eta^{-1}(\xi_1)^{-1}) \rangle}\notag\\
&\qquad e^{i\langle \xi_2,\eta^{-1}(q_2^\flat\xi_1)\beta(\eta^{-1}(q_2^\flat\xi_1)^{-1},q_2\eta^{-1}(\xi_1))\rangle}\notag\\
&\qquad\overline {b(\eta^{-1}({q_2^{-1}}^\flat\xi_2)^{-1}\eta^{-1}(\xi_1),\eta^{-1}(\xi_1)^{-1})}\,
b(\eta^{-1}(\xi_2)^{-1}\eta^{-1}(q_2^\flat\xi_1),\eta^{-1}(q_2^\flat\xi_1)^{-1})\notag\\
&\qquad f\big(q_2q_1, q_2^\flat\xi_1 ;\eta^{-1}(\xi_1+\eta^{-1}(q_2^{-1}))^{-1}\eta^{-1}(\xi_1),{\eta^{-1}(q_2^\flat\xi_1)^{-1}}^\flat(\xi_2-q_2^\flat\xi_1)\big).
\end{align}

The cocycle identity for $\beta$ at $(q_2,\eta^{-1}(\xi_1),\eta^{-1}(\xi_1)^{-1}q_2^{-1})$ gives
$$
\beta(q_2,q_2^{-1})+q_2\beta(\eta^{-1}(\xi_1),\eta^{-1}(\xi_1)^{-1}q_2^{-1})
-\beta(q_2\eta^{-1}(\xi_1),\eta^{-1}(\xi_1)^{-1}q_2^{-1})=\beta(q_2,\eta^{-1}(\xi_1)).
$$
The cocycle identity for $\beta$ at $(q_2,\eta^{-1}(\xi_1),\eta^{-1}(\xi_1)^{-1})$ gives
$$
\beta(q_2,\eta^{-1}(\xi_1))-q_2\beta(\eta^{-1}(\xi_1),\eta^{-1}(\xi_1)^{-1}) =-\beta(q_2\eta^{-1}(\xi_1),\eta^{-1}(\xi_1)^{-1}).
$$
Finally, the cocycle identity for $\beta$ at $( \eta^{-1}(q_2^\flat\xi_1),\eta^{-1}(q_2^\flat\xi_1)^{-1},q_2\eta^{-1}(\xi_1))$ gives
\begin{multline*}
 \eta^{-1}(q_2^\flat\xi_1)\beta(\eta^{-1}(q_2^\flat\xi_1)^{-1},q_2\eta^{-1}(\xi_1))\\=
 \beta( \eta^{-1}(q_2^\flat\xi_1),\eta^{-1}(q_2^\flat\xi_1)^{-1})- \beta(\eta^{-1}(q_2^\flat\xi_1),\eta^{-1}(q_2^\flat\xi_1)^{-1}q_2\eta^{-1}(\xi_1)).
\end{multline*}
Hence the $e^{i\langle\xi_2,\cdots\rangle}$-factors in \eqref{eq:W1} become
\begin{equation}\label{eq:xi2}
e^{i\langle \xi_2,-\beta(q_2\eta^{-1}(\xi_1),\eta^{-1}(\xi_1)^{-1})+ \beta( \eta^{-1}(q_2^\flat\xi_1),\eta^{-1}(q_2^\flat\xi_1)^{-1})
- \beta(\eta^{-1}(q_2^\flat\xi_1),\eta^{-1}(q_2^\flat\xi_1)^{-1}q_2\eta^{-1}(\xi_1))\rangle}.
\end{equation}

The cocycle identity for $\beta$ at $(\eta^{-1}(\xi_1)^{-1}q_2^{-1},q_2\eta^{-1}(\xi_1),\eta^{-1}(\xi_1)^{-1}q_1)$ gives
\begin{multline*}
-\beta(\eta^{-1}(\xi_1)^{-1}q_2^{-1},q_2q_1)+\beta(\eta^{-1}(\xi_1)^{-1}q_2^{-1},q_2\eta^{-1}(\xi_1))\\
=\eta^{-1}(\xi_1)^{-1}q_2^{-1}\beta(q_2\eta^{-1}(\xi_1),\eta^{-1}(\xi_1)^{-1}q_1),
\end{multline*}
and thus the $e^{i\langle\xi_1,\cdots\rangle}$-factors become
$$
e^{i\langle \xi_1 ,\eta^{-1}(\xi_1)\beta(\eta^{-1}(\xi_1)^{-1},q_1)+q_2^{-1}\beta(q_2\eta^{-1}(\xi_1),\eta^{-1}(\xi_1)^{-1}q_1)
   -q_2^{-1}\eta^{-1}(q_2^\flat\xi_1)\beta(\eta^{-1}(q_2^\flat\xi_1)^{-1},q_2\eta^{-1}(\xi_1))\rangle}.
$$
The cocycle identity for $\beta$ at $(q_2\eta^{-1}(\xi_1),\eta^{-1}(\xi_1)^{-1},q_1)$ gives
\begin{multline*}
 q_2 \eta^{-1}(\xi_1)\beta(\eta^{-1}(\xi_1)^{-1},q_1)+\beta(q_2\eta^{-1}(\xi_1),\eta^{-1}(\xi_1)^{-1}q_1)\\ =
 \beta(q_2,q_1)+\beta(q_2\eta^{-1}(\xi_1),\eta^{-1}(\xi_1)^{-1}),
\end{multline*}
and thus the $e^{i\langle\xi_1,\cdots\rangle}$-factors give
\begin{equation}\label{eq:xi1}
e^{i\langle q_2^\flat\xi_1 ,\beta(q_2,q_1)+\beta(q_2\eta^{-1}(\xi_1),\eta^{-1}(\xi_1)^{-1})
   - \beta( \eta^{-1}(q_2^\flat\xi_1),\eta^{-1}(q_2^\flat\xi_1)^{-1})+ \beta(\eta^{-1}(q_2^\flat\xi_1),\eta^{-1}(q_2^\flat\xi_1)^{-1}q_2\eta^{-1}(\xi_1))\rangle}.
\end{equation}

Equality~\eqref{eq:W1} together with the expressions~\eqref{eq:xi2} and~\eqref{eq:xi1} for the $e^{i\langle\xi_2,\cdots\rangle}$- and $e^{i\langle\xi_1,\cdots\rangle}$-factors imply the result.
\ep

\bigskip

\begin{bibdiv}
\begin{biblist}

\bib{MR1652717}{article}{
   author={Baaj, Saad},
   author={Skandalis, Georges},
   title={Transformations pentagonales},
   %language={French, with English and French summaries},
   journal={C. R. Acad. Sci. Paris S\'{e}r. I Math.},
   volume={327},
   date={1998},
   number={7},
   pages={623--628},
   issn={0764-4442},
   review={\MR{1652717}},
   doi={10.1016/S0764-4442(99)80090-1},
}

\bib{MR1969723}{article}{
   author={Baaj, Saad},
   author={Skandalis, Georges},
   author={Vaes, Stefaan},
   title={Non-semi-regular quantum groups coming from number theory},
   journal={Comm. Math. Phys.},
   volume={235},
   date={2003},
   number={1},
   pages={139--167},
   issn={0010-3616},
   review={\MR{1969723}},
   doi={10.1007/s00220-002-0780-6},
}

\bib{MR2115374}{article}{
   author={Baaj, Saad},
   author={Skandalis, Georges},
   author={Vaes, Stefaan},
   title={Measurable Kac cohomology for bicrossed products},
   journal={Trans. Amer. Math. Soc.},
   volume={357},
   date={2005},
   number={4},
   pages={1497--1524},
   issn={0002-9947},
   review={\MR{2115374}},
   doi={10.1090/S0002-9947-04-03734-1},
}

\bib{MR3763276}{article}{
   author={Bachiller, David},
   title={Extensions, matched products, and simple braces},
   journal={J. Pure Appl. Algebra},
   volume={222},
   date={2018},
   number={7},
   pages={1670--1691},
   issn={0022-4049},
   review={\MR{3763276}},
   doi={10.1016/j.jpaa.2017.07.017},
}

\bib{MR2534251}{article}{
   author={Ben David, Nir},
   author={Ginosar, Yuval},
   title={On groups of central type, non-degenerate and bijective cohomology
   classes},
   journal={Israel J. Math.},
   volume={172},
   date={2009},
   pages={317--335},
   issn={0021-2172},
   review={\MR{2534251}},
   doi={10.1007/s11856-009-0075-0},
}

\bib{BGNT3}{article}{
   author={Bieliavsky, P.},
   author={Gayral, V.},
   author={Neshveyev, S.},
   author={Tuset, L.},
   title={Quantization of subgroups of the affine group},
   journal={J. Funct. Anal.},
   volume={280},
   date={2021},
   number={4},
   pages={Paper No. 108844, 52},
   issn={0022-1236},
   review={\MR{4179067}},
   doi={10.1016/j.jfa.2020.108844},
}

\bib{DC}{article}{
   author={De Commer, Kenny},
   title={Galois objects and cocycle twisting for locally compact quantum
   groups},
   journal={J. Operator Theory},
   volume={66},
   date={2011},
   number={1},
   pages={59--106},
   issn={0379-4024},
   review={\MR{2806547}},
}

\bib{GM}{misc}{
      author={Gayral, Victor},
      author={Marie, Valentin},
       title={From projective representations to pentagonal cohomology via quantization},
         how={preprint},
        date={2023},
      eprint={\href{https://arxiv.org/abs/2305.03389}{\texttt{2305.03389 [math.OA]}}},
}

\bib{EG}{article}{
   author={Etingof, Pavel},
   author={Gelaki, Shlomo},
   title={A method of construction of finite-dimensional triangular
   semisimple Hopf algebras},
   journal={Math. Res. Lett.},
   volume={5},
   date={1998},
   number={4},
   pages={551--561},
   issn={1073-2780},
   review={\MR{1653340}},
   doi={10.4310/MRL.1998.v5.n4.a12},
}

\bib{MR1810480}{article}{
   author={Etingof, Pavel},
   author={Gelaki, Shlomo},
   title={Isocategorical groups},
   journal={Internat. Math. Res. Notices},
   date={2001},
   number={2},
   pages={59--76},
   issn={1073-7928},
   review={\MR{1810480}},
   doi={10.1155/S1073792801000046},
}

\bib{MR1722951}{article}{
   author={Etingof, Pavel},
   author={Schedler, Travis},
   author={Soloviev, Alexandre},
   title={Set-theoretical solutions to the quantum Yang-Baxter equation},
   journal={Duke Math. J.},
   volume={100},
   date={1999},
   number={2},
   pages={169--209},
   issn={0012-7094},
   review={\MR{1722951}},
   doi={10.1215/S0012-7094-99-10007-X},
}

\bib{J}{article}{
   author={Jondreville, David},
   title={A locally compact quantum group arising from quantization of the
   affine group of a local field},
   journal={Lett. Math. Phys.},
   volume={109},
   date={2019},
   number={4},
   pages={781--797},
   issn={0377-9017},
   review={\MR{3920497}},
   doi={10.1007/s11005-018-1126-4},
}

\bib{Kac}{article}{
   author={Kac, G. I.},
   title={Extensions of groups to ring groups},
   journal={Math. USSR-Sb.},
   volume={5},
   number={3},
   date={1968},
   pages={451--474},
   doi={10.1070/SM1968v005n03ABEH003627},
}

\bib{MR0414775}{article}{
   author={Moore, Calvin C.},
   title={Group extensions and cohomology for locally compact groups. III},
   journal={Trans. Amer. Math. Soc.},
   volume={221},
   date={1976},
   number={1},
   pages={1--33},
   issn={0002-9947},
   review={\MR{0414775}},
   doi={10.2307/1997540},
}

\bib{MR2844801}{article}{
   author={Neshveyev, Sergey},
   author={Tuset, Lars},
   title={On second cohomology of duals of compact groups},
   journal={Internat. J. Math.},
   volume={22},
   date={2011},
   number={9},
   pages={1231--1260},
   issn={0129-167X},
   review={\MR{2844801}},
   doi={10.1142/S0129167X11007239},
}

\bib{MR0556091}{article}{
   author={Ooms, Alfons I.},
   title={On Frobenius Lie algebras},
   journal={Comm. Algebra},
   volume={8},
   date={1980},
   number={1},
   pages={13--52},
   issn={0092-7872},
   review={\MR{0556091}},
   doi={10.1080/00927878008822445},
}

\bib{MR2298848}{article}{
   author={Rump, Wolfgang},
   title={Classification of cyclic braces},
   journal={J. Pure Appl. Algebra},
   volume={209},
   date={2007},
   number={3},
   pages={671--685},
   issn={0022-4049},
   review={\MR{2298848}},
   doi={10.1016/j.jpaa.2006.07.001},
}

\bib{MR1272113}{article}{
   author={Segal, Dan},
   title={Free left-symmetric algebras and an analogue of the
   Poincar\'{e}-Birkhoff-Witt theorem},
   journal={J. Algebra},
   volume={164},
   date={1994},
   number={3},
   pages={750--772},
   issn={0021-8693},
   review={\MR{1272113}},
   doi={10.1006/jabr.1994.1088},
}

\bib{MR1970242}{article}{
   author={Vaes, Stefaan},
   author={Vainerman, Leonid},
   title={Extensions of locally compact quantum groups and the bicrossed
   product construction},
   journal={Adv. Math.},
   volume={175},
   date={2003},
   number={1},
   pages={1--101},
   issn={0001-8708},
   review={\MR{1970242}},
   doi={10.1016/S0001-8708(02)00040-3},
}

\end{biblist}
\end{bibdiv}

\bigskip

\end{document}